\theoremstyle{plain}
\newtheorem{thm}{Theorem}[section]
\newtheorem{prop}[thm]{Proposition}
\newtheorem{cor}[thm]{Corollary}
\newtheorem{lem}[thm]{Lemma}
\newtheorem{clm}[thm]{Claim}
\theoremstyle{remark}
\newtheorem*{rmk}{Remark}
\theoremstyle{definition}
\renewcommand{\a}{{\alpha}}
\newcommand{\g}{{\gamma}}
\newcommand{\G}{{\Gamma}}
\newcommand{\C}{{\mathbb C}}
\newcommand{\D}{{\mathbb D}}
\newcommand{\R}{{\mathbb R}}
\newcommand{\N}{{\mathbb N}}
\newcommand{\Z}{{\mathbb Z}}
\newcommand{\T}{{\mathbb T}}
\newcommand{\cA}{{\mathcal A}}
\newcommand{\cB}{{\mathcal B}}
\newcommand{\cC}{{\mathcal C}}
\newcommand{\cE}{{\mathcal E}}
\renewcommand{\cB}{{\mathcal B}}
\newcommand{\cM}{{\mathcal M}}
\newcommand{\cQ}{{\mathcal Q}}
\newcommand{\cD}{\mathcal{D}}
\newcommand{\dd}{\mathrm{d}}
\newcommand{\goldR}{\varphi}
\newcommand{\veps}{\varepsilon}
\newcommand{\vphi}{\varphi}
\newcommand{\greenf}{\mathfrak{g}}
\DeclareMathOperator{\Vol}{Vol}
\DeclareMathOperator{\LCap}{Cap}
\DeclareMathOperator{\Area}{Area}
\DeclareMathOperator{\Length}{Length}
\newcommand{\vol}[1]{\Vol\left(#1\right)}
\newcommand{\logcap}[1]{\LCap\left({#1}\right)}
\renewcommand\emptyset{\varnothing}
\newcommand{\disc}[2]{\D\left({#1},{#2}\right)}
\newcommand\defeq{\coloneqq}
\newcommand\eqdef{\eqqcolon}
\title[Integer-valued polynomials]{Integer-valued polynomials\\ satisfying growth constraints}
\author{Avner Kiro}
\author{Alon Nishry}
\date{}
\address{A. Kiro, Faculty of Mathematics and Computer Science,
The Weizmann Institute of Science,
234 Herzl Street,Rehovot 76100, Israel}
\email{avner-ephraiem.kiro@weizmann.ac.il}
\address{A. Nishry, School of Mathematical Sciences, Tel Aviv University, Tel Aviv 69978, Israel}
\email{alonish@tauex.tau.ac.il}
\thanks{The research of AN was supported in part by ISF Grant 3537/24.}
\subjclass{Primary: 30E05 . Secondary: 11C08 31A15 42C05}
\keywords{Integer-valued polynomials, Logarithmic capacity, Orthogonal polynomials, Moment matrices}
\begin{document}
\begin{abstract}
We consider polynomials which take integer values on the integers (IVPs), and satisfy an additional growth condition on the natural numbers. Elkies and Speyer, answering a question by Dimitrov, showed there is a critical exponential growth threshold, such that there are infinitely many IVPs with growth above the threshold and finitely many IVPs below that threshold (of arbitrary degree). In this paper, we give more refined estimates for the number of IVPs having exponential growth thresholds. In addition, we consider a similar problem, where there is a (not necessarily symmetric) growth condition on the integers. Notably, the critical threshold is determined by the logarithmic capacity of an explicit domain.
\end{abstract}

\maketitle

\section{Introduction}
Ruzsa's conjecture asks whether a function $f : \N \to \Z$ which satisfies
\[
f(m) - f(n) \equiv 0 \pmod{m - n} \quad \text{for all } n,m\in\N
\]
and $|f(n)|<e^n$ for all $n\in\N$ is a polynomial (see \cite{Zannier96}). Perhaps being motivated by this conjecture, Dimitrov \cite{Dim13} asked if there are infinitely many \emph{integer-valued polynomials} $P$ of arbitrary degree that satisfy
\begin{equation}
    \label{eq:P_N_growth_cond}
    |P(n)| \le A^n \mbox{ for all } n \in \N = \{ 0, 1, 2, \dots \},
\end{equation}
where $A > 1$ is fixed. Recall that an integer-valued polynomial (IVP) $P$ is an algebraic polynomial taking integer values on the natural numbers $\N$ (or equivalently on the integers $\Z$). For more on IVPs see the book \cite{CaCha97} and also the survey by Cahen and Chabert \cite{CaCha16}.

Elkies and Speyer \cite{Dim13} proved that the threshold lies at the \emph{golden ratio}, that is, if $A < \goldR = \frac12 (1 + \sqrt{5})$ there are only finitely many such polynomials (of all degrees), while if $A > \goldR$ there are infinitely many of them. To the best of our knowledge the answer for $A = \goldR$ is still not known.

Our first theorem provide a small (asymptotic) improvement of the existence result.

\begin{thm}\label{thm:IVP_thres_infty}
For any $\veps > 0$, there are infinitely many integer-valued polynomials $P$ that satisfy the inequalities
\[
|P(n)| \le \left(\sqrt{2\goldR} + \veps \right) \goldR^n, \quad \text{for all } n \in \N.
\]
\end{thm}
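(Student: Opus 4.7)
The plan is to produce, for each sufficiently large $d$ in an infinite set of degrees, an explicit IVP $P_d$ of degree $d$ satisfying the bound. Since polynomials of different degrees are distinct, such a family immediately gives infinitely many IVPs obeying the inequality.

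I would start by reformulating the growth condition analytically. Any IVP of degree $\le d$ can be written as $P(x)=\sum_{k=0}^{d}c_k\binom{x}{k}$ with $c_k\in\Z$, and the standard generating-function identity $\sum_{n\ge 0}\binom{n}{k}z^n = z^k/(1-z)^{k+1}$ gives
\[
\sum_{n\ge 0}P(n)z^n \;=\; \frac{1}{1-z}\, Q\!\left(\frac{z}{1-z}\right), \qquad Q(u)=\sum_{k=0}^{d}c_k u^k\in\Z[u].
\]
Via Cauchy's coefficient estimates, a bound of the form $|P(n)|\le C\varphi^n$ is essentially equivalent to an $L^\infty$-control of $(1-z)^{-1}Q(z/(1-z))$ on the circle $|z|=1/\varphi$. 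The Möbius substitution $u=z/(1-z)$ sends this circle to a Euclidean circle in the $u$-plane passing through $u=\varphi$ (the image of $z=1/\varphi$), so the question becomes one about integer polynomials of degree $d$ that are as small as possible on a specific circle while the weight $(1-z)^{-1}$ is incorporated.

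To extract the sharp constant, I would choose $Q_d(u)$ to be a rounded Chebyshev-type polynomial on this image circle, built by composing $T_d$ with an affine or Möbius map sending $[-1,1]$ onto a chord through $\varphi$. The classical asymptotic $T_d(\cosh t)=\tfrac12(e^{dt}+e^{-dt})$, together with the change of variable and the $(1-z)^{-1}$ weight, yields the exponential rate $\varphi^n$ on the $P(n)$ side and a multiplicative constant that optimizes to $\sqrt{2\varphi}=\sqrt{1+\sqrt 5}$. The factor $\tfrac12$ from Chebyshev asymptotics and the harmonic-measure weight of $\varphi$ combine to produce exactly this value.

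The main obstacle is enforcing $c_k\in\Z$ while preserving the sharp constant: the continuous extremal polynomial has generally irrational coefficients, and naive rounding can inflate the sup norm. I would handle this by first scaling by an integer multiplier $M_d$ so the coefficients sit within $o(1)$ of integers, then correcting by subtracting a bounded-degree IVP whose growth is $O(\varphi^n)$ with a constant that becomes negligible relative to the $\varepsilon\varphi^n$ slack. The quantitative control of the corrector — that the error of the integer approximation can be arranged to be $o(1)$ times $\varphi^n$ uniformly in $n$ — is the technical heart of the argument, and is precisely the kind of question governed by the logarithmic-capacity methods the paper develops in its other sections.
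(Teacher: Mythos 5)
Your plan runs into a fundamental obstruction at the step where you try to enforce integrality, and it is worth being precise about why. After the substitution $u=z/(1-z)$ the circle $|z|=1/\goldR$ maps to the circle $\T_1=\T(1/\goldR,1)$, and the monic polynomial of degree $d$ with smallest sup norm on a \emph{circle} is simply the pure power $(u-1/\goldR)^d$ (not a composed Chebyshev polynomial; Chebyshev extremality is an interval phenomenon, and using it here already misidentifies the extremal object). The coefficients of $(u-1/\goldR)^d$ are powers of $1/\goldR$, hence irrational, and the proposed fixes — an integer multiplier $M_d$ bringing all $d+1$ coefficients near integers simultaneously, or subtracting a bounded-degree IVP corrector — cannot succeed. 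The first is a simultaneous Diophantine approximation problem whose quality necessarily degrades with $d$; the second cannot touch the high-degree coefficients at all. The real issue is quantitative: on $\T_1$ the monomial $u^k$ has modulus up to $(1+1/\goldR)^k=\goldR^k$, so an $O(1)$ rounding error in even a single coefficient $c_k$ with $k$ close to $d$ contributes an error of size $\approx\goldR^d$ to $h_{\mathbf c}$ on $\T_1$, which after transforming back swamps the target bound $C\goldR^n$ completely. There is no "$o(1)$" control available by these means.

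This is exactly why the paper's argument is \emph{nonconstructive}. It does not build the polynomials; it shows they must exist by a pigeonhole/Minkowski-type mechanism. Concretely, the paper reduces (via Lemma \ref{lem:poly_bound}) the infinite family of constraints to a finite linear system, computes $\det(\Sigma_\infty^t\Sigma_\infty)=(\goldR/(2t^2))^{d+1}$ by recognizing the Gram matrix as a moment matrix on $\T_1$ with monic orthogonal polynomials $(w-1/\goldR)^k$, and then invokes the second half of Theorem \ref{thm:Vaaler_volume_lattice_pts} to guarantee many integer lattice points in the convex body $\cC_d^\infty(2t)$ whenever $t>\sqrt{\goldR/2}$. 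Setting $t=\sqrt{\goldR/2}+\veps/2$ yields the stated constant $\sqrt{2\goldR}+\veps$. The paper's remark immediately after the theorem even states explicitly that no explicit infinite family of IVPs meeting these bounds is known — i.e., the constructive route you are pursuing is precisely the one the authors flag as open. Your analytic setup (generating function, Möbius change of variables, identifying the correct circle and the role of $\goldR$) matches the paper's preliminaries, but the heart of the proof must be an existence argument for integer points in a convex body, not a rounding of the continuous extremizer.
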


\begin{rmk}
We are not aware of any explicit example of an infinite family of IVPs satisfying the above inequalities (even after replacing the constant $\sqrt{2\goldR}$ by any larger absolute constant).
\end{rmk}

It is also possible to give $\ell^2$-type results.
\begin{thm}\label{thm:IVP_thres_elltwo}
There are no (non-trivial) IVPs $P$ satisfying
\[
\sum_{n\ge 0} |P(n)|^2 \goldR^{-2n} < \goldR.
\]
For any $\veps>0$, there are infinitely many IVPs $P$ that satisfy
\[
\sum_{n\ge 0} \tfrac{1}{n+1} |P(n)|^2 \goldR^{-2n} \le \frac{2}{\pi}  + \veps.
\]
\end{thm}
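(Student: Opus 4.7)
The plan is to reduce both parts via the Newton-series transform $P(n) = \sum_k a_k \binom{n}{k}$ (with $a_k \in \Z$), which corresponds to the integer-coefficient polynomial $Q(w) = \sum_k a_k w^k$: the generating-function identity $\sum_n P(n) z^n = \frac{1}{1-z} Q(z/(1-z))$ suggests the substitution $w = z/(1-z)$. This conformal map sends $\{|z|<1/\goldR\}$ onto $D := \{|w - 1/\goldR| < 1\}$ (using $\goldR - 1 = 1/\goldR$), and tracking Jacobians produces the two key identities
\[
\sum_{n\ge 0} |P(n)|^2 \goldR^{-2n} = \frac{\goldR}{2\pi}\int_0^{2\pi}\bigl|Q(\tfrac{1}{\goldR}+e^{i\theta})\bigr|^2 d\theta, \qquad \sum_{n\ge 0}\tfrac{|P(n)|^2}{n+1}\goldR^{-2n} = \tfrac{\goldR^2}{\pi}\int_D\tfrac{|Q(w)|^2}{|1+w|^2}dA(w),
\]
the first by Parseval on $|z| = 1/\goldR$ and the second by the Bergman identity $\sum_n \tfrac{|P(n)|^2}{n+1} r^{2n} = \tfrac{1}{\pi r^2}\int_{|z|<r}|f|^2 dA$ applied with $r = 1/\goldR$.

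For the lower-bound half, I expand $Q(\tfrac{1}{\goldR} + z) = \sum_{j=0}^d b_j z^j$ and apply Parseval in $\theta$, rewriting the right-hand side of the first identity as $\goldR\sum_j |b_j|^2$. Since a translation preserves the leading coefficient, $b_d = a_d$; and $|a_d| \ge 1$ for any nonzero integer polynomial $Q$, so $\sum_n|P(n)|^2\goldR^{-2n} \ge \goldR$. The change-of-variable identity is the only non-routine step.

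For the existence half, the second identity presents the sum as a positive-definite quadratic form $\mathbf{a}^T G_d \mathbf{a}$ on the integer lattice $\Z^{d+1}$ of IVPs of degree $\le d$. The crucial input is the asymptotic $\det G_d \sim \goldR/(d+1)!$. To establish it, the substitution $u = w - 1/\goldR$ and the series expansion $|\goldR+u|^{-2} = \goldR^{-2}\sum_{m,n \ge 0} (-1)^{m+n}\goldR^{-(m+n)}u^m \bar u^n$ give explicit Gram entries in the $\{u^k\}$-basis:
\[
A_{jk} = \pi(-\goldR)^{j+k}\tau_{\max(j,k)}, \qquad \tau_m := \sum_{\ell>m}\tfrac{1}{\ell\goldR^{2\ell}}.
\]
The diagonal factor $(-\goldR)^{j+k}$ extracts, reducing $\det A_d$ to $\det B_d$ with $B_{jk} = \tau_{\max(j,k)}$; the row operation $(\text{row }k)\mapsto(\text{row }k) - (\text{row }k+1)$ for $k < d$ makes $B_d$ lower triangular with diagonal entries $\tau_k - \tau_{k+1} = 1/((k+1)\goldR^{2(k+1)})$ and last entry $\tau_d \sim 1/((d+1)\goldR^{2d+1})$. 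The basis change $\{w^k\} \to \{u^k\}$ is upper triangular with unit diagonal, contributing $1$ to the determinant.

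Applying Minkowski's first theorem with Stirling ($V_{d+1}^{2/(d+1)}\sim 2\pi e/(d+1)$ and $(\det G_d)^{1/(d+1)}\sim e/(d+1)$) yields $\lambda_1(L_d)^2 \le 4(\det G_d)^{1/(d+1)}/V_{d+1}^{2/(d+1)} \to 2/\pi$. To extract \emph{infinitely many} IVPs rather than one per $d$, I invoke the multi-point Minkowski/Blichfeldt bound: with $R^2 = 2/\pi + \veps$, the ratio $\mathrm{vol}(B_R)/(2^{d+1}\sqrt{\det G_d})$ simplifies (after Stirling) to $(1+\pi\veps/2)^{(d+1)/2}$ up to polynomial factors, so the number of lattice points in $B_R \cap L_d$ grows exponentially in $d$; hence the set of IVPs satisfying $\|P\|^2 \le 2/\pi + \veps$ is infinite. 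The main technical hurdle is the exact Gram-determinant computation; the telescoping structure of $(\tau_{\max(j,k)})$ is what makes it feasible and delivers the precise constant $2/\pi$.
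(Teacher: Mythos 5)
Both halves of your proof are correct. Part one (the lower bound $\ge\goldR$) is essentially identical to the paper's argument: after $w=z/(1-z)$ the constraint becomes an arc-length integral over $\T(1/\goldR,1)$, and your expansion of $Q$ around the center $1/\goldR$ is exactly the monic orthogonal basis $\{(w-1/\goldR)^k\}$ used in the paper, with your observation that $b_d=a_d$ replacing the paper's remark that the leading coefficient of $h_{\bf c}$ equals the leading integer.

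Part two, however, takes a genuinely different route. The paper invokes Suetin's asymptotic theory for Bergman orthogonal polynomials with a weight on the disk $\D(1/\goldR,1)$ -- one has to introduce the exterior conformal map, the boundary-value function $D(w)$, and evaluate $D(\infty)$ via the mean-value property of $\log|w+\goldR|$ -- to obtain $\gamma_k^2=\frac{\pi}{\goldR^2(k+1)}(1+o(1))$. You instead compute the Gram determinant $\det G_d$ in closed form. The power-series expansion of $|\goldR+u|^{-2}$ combined with the orthogonality of monomials on $|u|<1$ gives the Gram entries as $\pi(-\goldR)^{j+k}\tau_{\max(j,k)}$ (the prefactors $\goldR^2/\pi$ from the Bergman identity and the $\pi$ from the area integral cancel), and the telescoping row operation makes the determinant computable exactly: $\det G_d=\goldR^{2(d+1)}\tau_d/d!$, with $\tau_d\le\frac{\goldR}{(d+1)\goldR^{2(d+1)}}$ as an exact inequality, so $\det G_d\le\goldR/(d+1)!$. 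This is more elementary and in fact more self-contained -- it avoids the Widom/Suetin machinery entirely, and it does not even require the asymptotic equivalence, only the one-line upper bound on $\tau_d$. What Suetin's theorem buys the paper is generality: it also handles the two-circle domains appearing in Theorem 2.4 and the generic pair $(A,B)$, where no closed form is available. For the golden-ratio single-disk case, your computation is the cleaner path. The final Blichfeldt/van der Corput count $\bigl(1+\pi\veps/2\bigr)^{(d+1)/2}$ is right, up to polynomial factors that are irrelevant to the conclusion.
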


\begin{rmk}
Notice that for the constant IVP, $P\equiv1$, it holds
\[
\sum_{n\ge 0} \goldR^{-2n} = \goldR, 
\]
and moreover, taking $P(n) = \tfrac12 n(n+3)$, one can check that
\[
\sum_{n\ge 0} \frac{1}{n+1} |P(n)|^2 \goldR^{-2n} = 4 \goldR^2 \log (\goldR)-\frac{15}{2 \goldR} \approxeq 0.40 < \frac{2}{\pi}.
\]
This bound can be improved, but we do not know the optimal bound (if one exists).
\end{rmk}

It is known \cite[Chap. 1]{CaCha97} that the sequence of binomial polynomials $\{ \binom{x}{k} \}_{k\in\N}$ forms a basis over $\Z$ for IVPs. That is, an IVP $P$ of degree at most $d$ can be written uniquely in the form
\[
P(x) = P_\textbf{c}(x) = \sum_{k=0}^d c_k \binom{x}{k}, \quad \mbox{with} \quad \textbf{c} = (c_0, \dots, c_d) \in \Z^{d+1}.
\]
Hence, counting IVPs $P$ that satisfy the inequalities \eqref{eq:P_N_growth_cond} is equivalent to counting integer lattice points inside the \emph{convex} body
\[
\cC_A(d) = \left\{ \textbf{c} = (c_0, \dots, c_d) \in \R^{d+1} \colon \sup_{n\in\N} |P_\textbf{c}(n) A^{-n}| \le 1 \right\}.
\]

It is well known that in order to bound the number of $\Z^{d+1}$ lattice points inside $\cC_A(d)$ from below, it is sufficient to estimate $\vol{\cC_A(d)}$. On the other hand, to obtain an upper bound for the number of lattice points one has to use the special structure of this convex set. 
\begin{rmk}
Note that since a polynomial of degree $d$ is determined by its values at (say) $\{ 0, \dots, d \}$, there are finitely many polynomials of \emph{fixed} degree $d$ that satisfy any of the inequalities in the theorems above.
\end{rmk}

\subsection*{Acknowledgement}
We thank Aron Wennman for providing references for orthogonal polynomials, and Misha Sodin and Fedja Nazarov for helpful conversations.

\section{The results}
We are interested in estimating the number of IVPs of degree at most $d$ satisfying growth conditions on the integers such as \eqref{eq:P_N_growth_cond} as the degree $d$ tends to infinity.

We begin by introducing some notation. Let $A,B > 1$ be fixed constants. We put $\textbf{c} = \textbf{c}(d) = (c_0, \dots, c_d)$, and define the convex body
\[
\cC_{A,B}^\infty(d;t) = \left\{ \textbf{c} \in \R^{d+1} \colon \sup_{n\in\N} |P_\textbf{c}(n) A^{-n}| \le t , \: \sup_{n\in\N^+} |P_\textbf{c}(-n) B^{-n}| \le t \right\},
\]
where $\N = \{ 0, 1, 2, \dots \},\, \N^+ = \{ 1, 2, 3, \dots \}$. Notice the $\Z^{d+1}$ lattice points in $\cC_{A,B}^\infty(d;t)$ correspond to IVPs $P$ of degree at most $d$ satisfying the bounds
\begin{equation*}
    |P(n)| \le t A^n, \quad \forall n \in \N, \quad \text{and} \quad |P(-n)| \le t B^n, \quad \forall n \in \N^+.
\end{equation*}
Similarly, one may consider the $\ell^2$ bounds
\begin{equation*}
    \sum_{n\in\N}|P(n)|^2 A^{-2n} \le t^2 , \quad \text{and} \quad \sum_{n\in\N^+} |P(-n)|^2 B^{-2n} \le t^2,
\end{equation*}
corresponding to $\Z^{d+1}$ lattice points inside the set
\[
\cC_{A,B}^2(d;t) = \left\{ \textbf{c} \in \R^{d+1} \colon \sum_{n\in\N}|P_\textbf{c}(n)|^2 A^{-2n} \le t^2, \: \sum_{n\in\N^+} |P_\textbf{c}(-n)|^2 B^{-2n} \le t^2 \right\}.
\]

It turns out that the volume of the convex bodies above is closely related to the \emph{logarithmic capacity} of certain domains in the complex plane. The logarithmic capacity of a compact set $E \subset \C$ is given by (see \cite[Chap. 5]{Ransford95})
\[
\logcap{E} \defeq \exp\left( -\inf_\nu \int_{E\times E} \log \frac{1}{|z-w|} \, \dd \nu(z) \, \dd \nu(w) \right),
\]
where the infimum is taken over \emph{probability} measures $\nu$ supported on the set $E$.

Put
\[
\D_1 = \disc{\frac{1}{A^2-1}}{\frac{A}{A^2-1}},\:
\D_2 = \disc{\frac{B^2}{1-B^2}}{\frac{B}{B^2-1}},
\]
where $\D(z,r)$ is the disk centered at $z$ and of radius $r$. Notice that if $A, B > 1$, then $\D_1 \cap \D_2 = \emptyset$. See Figure \ref{fig:circles}.

\begin{figure}[b]
    \centering
    \includegraphics[width=0.95\linewidth]{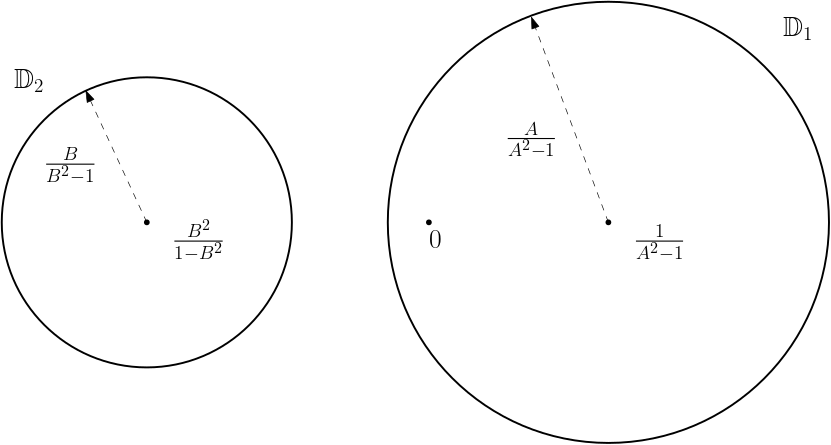}
    \caption{The disks $\mathbb{D}_1$ and $\mathbb{D}_2$.}
    \label{fig:circles}
\end{figure}

It is known that the capacity of a disk is equal to its radius (e.g. \cite[Cor. 5.2.2]{Ransford95}), e.g. $\logcap{\D_1} = \frac{A}{A^2-1}$. Note that the solution to the equation $\tfrac{A}{A^2-1} = 1$ with $A>1$ is the golden ratio $\goldR$. This value of $A$ is the \emph{critical threshold} for the volume of the set of polynomial coefficients ${\mathbf c}$, for which the polynomial $P_{\mathbf c}$ satisfies the one-sided bound \eqref{eq:P_N_growth_cond}.

For the two-sided bounds, the following result gives the volume of the set of polynomial coefficients, and the asymptotic number of IVPs (depending on the degree) above the critical threshold. 
\begin{thm}\label{thm:subcrit_volume_latticept}
Let $A,B > 1, t > 1$ be fixed constants, $\D_1, \D_2$ as above, and
\[
\g_{A,B} \defeq \logcap{\D_1 \cup \D_2}.
\]
Then, as $d \to \infty$,
\[
\begin{aligned}
\log \vol{\cC_{A,B}^2(d;t)} & = -\tfrac12 \log \g_{A,B} \cdot d^2 - \tfrac12 d \log d + O(d \log t), \quad \text{and}\\
\log \vol{\cC_{A,B}^\infty(d;t)} & = -\tfrac12 \log \g_{A,B} \cdot d^2 + O(d \log t).
\end{aligned}
\]
Moreover, when $\g_{A,B} \in (0,1)$ we have
\[
\log \# \left\{ \cC_{A,B}^2(d;t) \cap \Z^{d+1} \right\} = \log \vol{\cC_{A,B}^2(d;t)} + O(d \log t ).
\]
\end{thm}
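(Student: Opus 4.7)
Set $Q(z) := \sum_{k=0}^d c_k z^k$. The identity $\sum_{n\ge 0}\binom{n}{k}w^n = w^k/(1-w)^{k+1}$ shows that the Möbius substitution $z = w/(1-w)$ sends the generating function $F(w) = \sum_{n\ge 0} P_{\mathbf{c}}(n) w^n$ to $(1+z)Q(z)$; a short calculation shows that the circle $|w|=1/A$ is taken bijectively onto $\partial\D_1$. Parseval on $|w|=1/A$ then gives
\[
\sum_{n\ge 0}|P_{\mathbf{c}}(n)|^2 A^{-2n} = \int_{\partial\D_1}|(1+z)Q(z)|^2 \, d\sigma_1(z)
\]
for a smooth, strictly positive measure $\sigma_1$ on $\partial\D_1$. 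An analogous substitution $u = -1/(1-w)$, which maps $|w|=1/B$ onto $\partial\D_2$, treats the negative side and produces $\int_{\partial\D_2}|(1+u)Q(u)|^2 \, d\sigma_2(u)$. Since $|1+z|$ is bounded above and below by positive constants on the compact set $\partial\D_1 \cup \partial\D_2$, the total quadratic form is, up to $e^{O(d)}$ factors, the Gram inner product of monomials $\{z^k\}_{k=0}^d$ against the measure $d\mu := |1+z|^2(d\sigma_1+d\sigma_2)$ on $\partial\D_1 \cup \partial\D_2$.

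\textbf{Step 2 ($\ell^2$ and $\ell^\infty$ volumes).} The body $\cC_{A,B}^2(d;t)$ is an intersection of two ellipsoids sandwiched between $\{\mathbf{c} : \langle M_d\mathbf{c},\mathbf{c}\rangle \le t^2\}$ and $\{\mathbf{c} : \langle M_d\mathbf{c},\mathbf{c}\rangle \le 2t^2\}$, with $M_d$ the Gram matrix from Step 1, so $\vol{\cC_{A,B}^2(d;t)} = \omega_{d+1}t^{d+1}/\sqrt{\det M_d}\cdot e^{O(d)}$ (here $\omega_{d+1}$ is the volume of the Euclidean unit ball). Gram--Schmidt gives $\det M_d = \prod_{k=0}^d \kappa_k^{-2}$, where $\kappa_k$ is the leading coefficient of the degree-$k$ orthonormal polynomial for $\mu$. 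Since $\mu$ is absolutely continuous with a strictly positive smooth density on a disjoint union of two analytic Jordan curves, the Widom--Szeg\H{o} theory on such sets delivers the strong asymptotic $\log\kappa_k = -k\log\gamma_{A,B}+O(1)$, hence $\log\det M_d = -d(d+1)\log\gamma_{A,B}+O(d)$; combined with Stirling for $\omega_{d+1}$, this proves the $\ell^2$-volume formula. For $\cC^\infty$, the inclusion $\cC^2 \subseteq \cC^\infty$ gives the lower bound; for the upper bound I would change to the orthonormal basis $\{p_k\}$, in which $\cC^\infty$ becomes a polytope whose volume is $(O(t))^{d+1}e^{O(d)}$ (most of the infinitely many $\ell^\infty$ constraints are redundant for a polynomial of degree $d$, while the active constraints at $n=0,1,\dots,d$ form a well-conditioned system). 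The net effect is to replace the Euclidean-ball factor $\omega_{d+1}$ by a cube factor $2^{d+1}$, thereby removing the $-\tfrac12 d\log d$ correction.

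\textbf{Step 3 (Lattice count).} When $\gamma_{A,B}\in(0,1)$, Step 2 gives $\log\vol{\cC_{A,B}^2(d;t)} \ge cd^2$ for some $c>0$ and all large $d$. I would then verify that the $d+1$ successive minima of $\cC_{A,B}^2(d;t)$ with respect to $\Z^{d+1}$ are bounded by a constant --- for instance by exhibiting $d+1$ linearly independent lattice points produced from dilated binomial polynomials --- and apply Minkowski's second theorem together with a Betke--Henk--Wills-type inequality to conclude $\log \#\{\cC_{A,B}^2(d;t)\cap\Z^{d+1}\} = \log\vol{\cC_{A,B}^2(d;t)} + O(d)$, matching the claimed $O(d\log t)$ error.

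\textbf{Main obstacle.} The delicate analytic input is the strong asymptotic $\log \kappa_k = -k\log\gamma_{A,B}+O(1)$, rather than merely the $n$-th root form $\log\kappa_k = -k\log\gamma_{A,B}+o(k)$ supplied by regularity; the latter would only yield $o(d^2)$ error in $\log\det M_d$ and miss the target $O(d\log t)$. For a finite disjoint union of analytic Jordan curves equipped with a smooth positive weight, this strengthening is furnished by Widom's theory (involving the Szeg\H{o} function and a harmonic-measure correction), and applying it uniformly in $d$ is the technical heart of the argument.
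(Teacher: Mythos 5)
Your Steps~1 and the $\ell^2$-volume part of Step~2 coincide in substance with the paper's argument: the same M\"obius substitution $w\mapsto w/(1-w)$ carries the Parseval identity to an $L^2$ norm of $h_{\mathbf c}(w)=\sum c_k w^k$ on $\partial\D_1\cup\partial\D_2$, the set is sandwiched between two confocal ellipsoids, the determinant is a product of leading coefficients of the Szeg\H{o} orthogonal polynomials, and the strong asymptotics $\log\kappa_k=-k\log\gamma_{A,B}+O_\eta(1)$ (Widom) finish the job. Your ``Main obstacle'' paragraph correctly identifies this as the analytic crux.

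There is, however, a genuine gap in the $\ell^\infty$ part of Step~2. The inclusion $\cC^2_{A,B}(d;t)\subseteq\cC^\infty_{A,B}(d;t)$ only gives
\[
\log\vol{\cC^\infty_{A,B}(d;t)} \;\ge\; -\tfrac12 \log\gamma_{A,B}\cdot d^2 - \tfrac12\,d\log d + O(d\log t),
\]
which is off from the claimed lower bound by $\tfrac12 d\log d$. In the orthonormal-polynomial coordinates the body $\cC^2$ is essentially a Euclidean ball of radius $\asymp t$, while the desired lower bound for $\cC^\infty$ corresponds to a \emph{cube} of side $\asymp t$; the ball is smaller by exactly the factor $\omega_{d+1}/2^{d+1}=e^{-\frac12 d\log d+O(d)}$ you are missing. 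The paper obtains the correct lower bound from Vaaler's cube-slicing theorem (Theorem~\ref{thm:Vaaler_volume_lattice_pts}), which gives $\vol\ge(\det\Sigma^t\Sigma)^{-1/2}$ directly without the Euclidean-ball volume penalty. For the upper bound, your sketch ("most constraints redundant, the active ones at $n=0,\dots,d$ form a well-conditioned system") points in the right direction but glosses over the two technical ingredients the paper supplies: a polynomial-extrapolation lemma (Lemma~\ref{lem:poly_bound_twosided}) showing that the constraints at $n\le C_{A,B}\,d$ imply all the rest, so that Ball's theorem applies with only $N=O(d)$ rows, and a lower bound on the smallest eigenvalue of the moment matrix (Proposition~\ref{prop:bound_least_eigenvalue}) to control the truncation error in the determinant.

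For the lattice count you take a different route from the paper. You propose Minkowski's second theorem plus a Betke--Henk--Wills-type inequality, which requires verifying that all $d+1$ successive minima of $\cC^2$ are bounded by an absolute constant; this is plausible but nontrivial since the body is strongly anisotropic (semi-axes $\asymp t\,\gamma^{-k}$) and the claim ``dilated binomial polynomials give $d+1$ small independent lattice points'' is not established. The paper instead uses the explicit unipotent change of basis $\Psi$ from the $c_k$-coordinates to the orthogonal-polynomial coordinates (Proposition~\ref{prop:lattice_pts_in_ellipse}): because $\Psi^{-1}$ is lower unipotent, distinct lattice points pull back to points differing by at least $1$ in sup-norm, so unit cubes around them are disjoint and the count is bounded by the volume of a mildly inflated ellipsoid. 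That argument gives the $O(d\log t)$ error directly and bypasses successive minima altogether; it would be the cleaner way to complete your Step~3.
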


\begin{rmk}
By the monotonicity of logarithmic capacity, we see that the region where $\gamma_{A,B} \in (0,1)$ is contained in the set $A, B \ge \goldR$.
\end{rmk}

\begin{rmk}
Using the Green function for an annulus (e.g. \cite[V.15.7]{CoHi53}) one can derive an expression for the capacity of the union of two disjoint disks in terms of theta functions. More precisely, one can show that
\[
\g_{A,B} = \tfrac12 \exp\left( \frac{\log^2 A + \log^2 B}{2 \log (AB)} \right) \frac{\theta_1^\prime(0,\a)}{\sqrt{|\theta_1(i \log A, \a) \theta_1(i \log B, \a)|}},
\]
where $\a = (AB)^{-1}$, $\theta_1$ is a theta function (see e.g. \cite[16.27 f.]{AbSt92}). Here we used a modification of a formula due to Ransford, see  \cite[Example 4.7]{LiSN17}.

See Figure \ref{fig:level_set_capacity} for an illustration of the level sets of $\gamma_{A,B}$ using Mathematica. The dashed line corresponds to the critical level set $\gamma_{A,B} = 1$.
\end{rmk}

\begin{figure}
    \centering
    \includegraphics[width=0.95\linewidth]{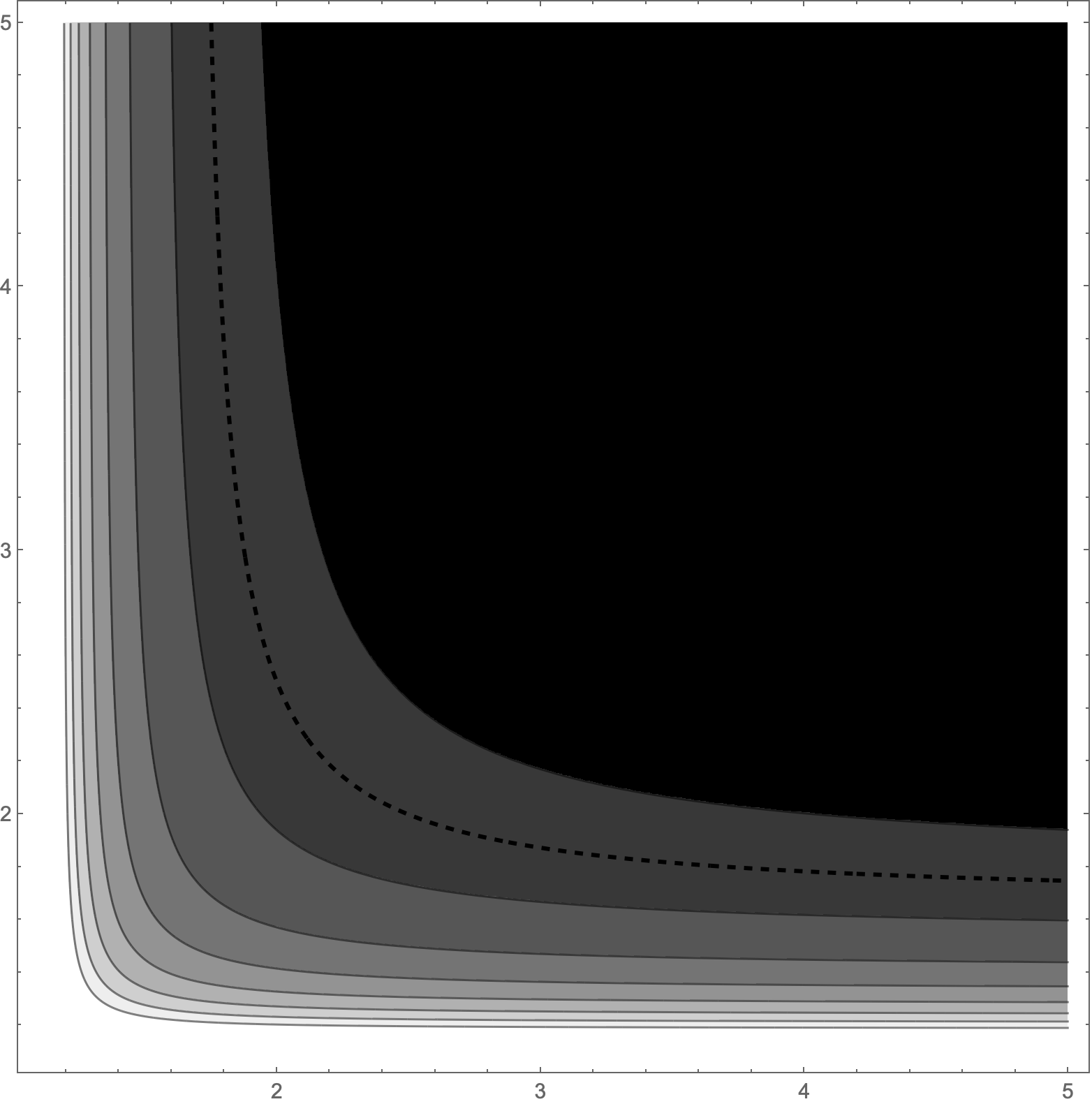}
    \caption{Level sets of $\gamma_{A,B}$}
    \label{fig:level_set_capacity}
\end{figure}

At the critical threshold, we can prove the analogs of Theorems~\ref{thm:IVP_thres_infty} and \ref{thm:IVP_thres_elltwo}, up to the calculation of the explicit constants (which depend on $A, B$).

\begin{thm}\label{thm:linfty_twosided}
Let $A,B > 1$ be fixed constants as in Theorem \ref{thm:subcrit_volume_latticept} that satisfy $\gamma_{A,B} = 1$. There is a constant $C_1 = C_1(A,B) > 0$ so that there are infinitely many IVPs $P$ satisying
\[
 |P_\textbf{c}(n)| \le C_1 A^n , \:\: \text{for all } n\in \N \text{ and } |P_\textbf{c}(-n)| \le C_1 B^n , \:\: \text{for all } n\in \N^+.
\]
\end{thm}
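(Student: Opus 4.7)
The strategy is to find a constant $C_1 = C_1(A,B) > 0$ so that the number of lattice points in $\cC_{A,B}^\infty(d; C_1) \cap \Z^{d+1}$ grows to infinity with $d$. Since each such lattice point corresponds, via the binomial basis, to a distinct IVP of degree at most $d$ satisfying the required two-sided bounds with constant $C_1$, this immediately produces the infinite family of IVPs sought. The main tools are the volume estimate of Theorem \ref{thm:subcrit_volume_latticept} at the critical threshold $\gamma_{A,B} = 1$ together with a Minkowski/Blichfeldt-type lattice-point count.

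The key observation is the homogeneity $\cC_{A,B}^\infty(d;t) = t \cdot \cC_{A,B}^\infty(d;1)$, which is immediate from the linearity of $P_\mathbf{c}$ in $\mathbf{c}$ and gives $\vol{\cC_{A,B}^\infty(d;t)} = t^{d+1}\, \vol{\cC_{A,B}^\infty(d;1)}$. I first apply Theorem \ref{thm:subcrit_volume_latticept} at $\gamma_{A,B} = 1$ with a fixed auxiliary parameter $t_0 > 1$ (say $t_0 = e$) to obtain a constant $C_0 = C_0(A,B)$ with $\vol{\cC_{A,B}^\infty(d;t_0)} \ge e^{-C_0 d}$ for all $d \ge 1$, and hence, by homogeneity,
\[
\vol{\cC_{A,B}^\infty(d;C_1)} \;=\; (C_1/t_0)^{d+1}\, \vol{\cC_{A,B}^\infty(d;t_0)} \;\ge\; (C_1/t_0)^{d+1}\, e^{-C_0 d}.
\]
Choosing $C_1 := 4\, t_0\, e^{C_0}$ (a constant depending only on $A,B$) yields $\vol{\cC_{A,B}^\infty(d;C_1)} \ge 4^{d+1}$ for every $d \ge 1$.

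Since $\cC_{A,B}^\infty(d;C_1)$ is a symmetric convex body in $\R^{d+1}$, Blichfeldt's theorem applied to $\tfrac{1}{2} \cC_{A,B}^\infty(d;C_1)$ now gives
\[
\#\bigl(\cC_{A,B}^\infty(d;C_1) \cap \Z^{d+1}\bigr) \;\ge\; \frac{\vol{\cC_{A,B}^\infty(d;C_1)}}{2^{d+1}} \;\ge\; 2^{d+1},
\]
which tends to infinity with $d$. Hence the set of IVPs of degree at most $d$ obeying the bounds with constant $C_1$ is unbounded as $d\to\infty$, proving the theorem.

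The main obstacle is that at $\gamma_{A,B} = 1$ the dominant $d^2$-term in the volume asymptotic vanishes, and a direct application of Theorem \ref{thm:subcrit_volume_latticept} at $t = C_1$ gives only $\vol{\cC_{A,B}^\infty(d;C_1)} \ge C_1^{-Cd}$, which is far too weak for Minkowski. The decisive point is to apply the estimate at a fixed auxiliary $t_0 > 1$ (where the error term is genuinely of order $d$) and then transfer to the desired scale $t = C_1$ via the homogeneity $\cC_{A,B}^\infty(d;t) = t\cdot\cC_{A,B}^\infty(d;1)$; this inserts the crucial factor $C_1^{d+1}$, which overwhelms the error-term loss once $C_1$ is taken sufficiently large.
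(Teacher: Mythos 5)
Your proof is correct, and it takes a genuinely different (though related) route from the paper's. The paper proves this theorem by staying inside the determinant machinery: it uses \eqref{eq:volume_Sigma_infty_asymp} at $\gamma_{A,B}=1$ to show $\det(\Sigma_\infty^t\Sigma_\infty)\to 0$ for $t$ large, truncates to a finite matrix $\Sigma_M$ with $M=\lceil C_{A,B}d\rceil$ via Lemma \ref{lem:poly_bound_twosided}, and then invokes the second part of Vaaler's theorem (Theorem \ref{thm:Vaaler_volume_lattice_pts}) to read off an unbounded count of integer solutions directly from the determinant bound. You instead take the volume asymptotic of Theorem \ref{thm:subcrit_volume_latticept} as a black box and apply a van der Corput/Blichfeldt-type lattice count to the full symmetric convex body $\cC_{A,B}^\infty(d;C_1)$, which is precisely the tool the paper uses for the $\ell^2$ analogue (Theorem \ref{thm:ltwo_twosided}); this sidesteps any need to re-engage with $\Sigma_M$ or Lemma \ref{lem:poly_bound_twosided}. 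Your homogeneity observation is the decisive and correct move: reading the stated error term $O(d\log t)$ in Theorem \ref{thm:subcrit_volume_latticept} literally at $t=C_1$ only yields a possibly exponentially small volume, whereas the proof of that theorem actually delivers $\log\vol{\cC_{A,B}^\infty(d;t)} \ge (d+1)\log t + O_{A,B}(d)$ with the $O_{A,B}(d)$ independent of $t$ — and your device of fixing $t_0$ to capture the $O(d)$ part and then scaling $\cC_{A,B}^\infty(d;t)=t\cdot\cC_{A,B}^\infty(d;1)$ reconstructs exactly this sharper form. In short, you trade the tailored convex-geometry input (Vaaler) for a generic one (Blichfeldt/van der Corput); the paper's version is marginally shorter because the Vaaler framework is already set up, while yours makes the $t^{d+1}$ scaling explicit and unifies the $\ell^\infty$ and $\ell^2$ critical-threshold arguments under one counting lemma.
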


\begin{thm}\label{thm:ltwo_twosided}
Let $A,B > 1$ be fixed constants as in Theorem \ref{thm:subcrit_volume_latticept} that satisfy $\gamma_{A,B} = 1$. There is a constant $C_2 = C_2(A,B) > 0$ so that there are no IVPs $P$ that satisfy
\[
\sum_{n\in\N} |P_\textbf{c}(n)|^2 A^{-2n} \le C_2, \text{ and }  \:\: \sum_{n\in\N^+} |P_\textbf{c}(-n)|^2 B^{-2n} \le C_2.
\]
Moreover, there is a constant $C_3 = C_3(A,B) > 0$, such that there are infinitely many IVPs $P$ satsifying
\[
\sum_{n\in\N} \frac{|P_\textbf{c}(n)|^2}{n+1}  A^{-2n} \le C_3, \text{ and }  \:\: \sum_{n\in\N^+} \frac{|P_\textbf{c}(-n)|^2}{n+1} B^{-2n} \le C_3.
\]
\end{thm}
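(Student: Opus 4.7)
Both halves of Theorem~\ref{thm:ltwo_twosided} reduce the discrete $\ell^2$ problems on $\Z$ to continuous $L^2$ problems on domains built from $\D_1$ and $\D_2$, via the M\"obius maps $u=z/(1-z)$ (which conformally sends $|z|=1/A$ onto $\partial\D_1$) and $v=-1/(1-z)$ (which sends $|z|=1/B$ onto $\partial\D_2$). To each IVP $P=\sum_k c_k\binom{x}{k}$ I would associate the \emph{dual polynomial} $f(w)=\sum_k c_k w^k\in\Z[w]$ sharing the same coefficient vector, and then reduce estimates on $P$ to estimates on $f$.

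\textbf{Non-existence.} Starting from the generating-function identities $\sum_{n\ge 0}P(n)z^n=f(u)/(1-z)$ and $\sum_{n\ge 1}P(-n)z^n=zf(v)/(1-z)$, applying Parseval on the circles $|z|=1/A$ and $|z|=1/B$ together with the M\"obius change of variable gives
\[
\sum_{n\ge 0}|P(n)|^2 A^{-2n}=c_A\!\int_{\partial\D_1}\!|f(u)|^2\,|du|,\qquad \sum_{n\ge 1}|P(-n)|^2 B^{-2n}=c_B\!\int_{\partial\D_2}\!|f(v)|^2\,|dv|,
\]
for positive constants $c_A,c_B$ (a direct calculation yields $c_A=A/(2\pi)$ and $c_B=1/(2\pi B)$, but only positivity is used). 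Hence the combined norm $\|P\|_A^2+\|P\|_B^2$ equals $\|f\|_{L^2(\sigma)}^2$ for an absolutely continuous measure $\sigma$ on the compact set $\partial\D_1\cup\partial\D_2$ of logarithmic capacity $\gamma_{A,B}$. By Widom's theorem on orthonormal polynomials for regular measures on finite unions of analytic curves, the leading coefficients $\kappa_d$ of the $\sigma$-orthonormal polynomials satisfy $\kappa_d\sim W\gamma_{A,B}^{-d}$ for some finite $W(\sigma)>0$. At the critical threshold $\gamma_{A,B}=1$ this gives the uniform bound $\sup_d\kappa_d<\infty$. Since a nonzero IVP yields a nonzero $f\in\Z[w]$ of some degree $d'$ with $|c_{d'}|\ge 1$, we deduce $\|f\|_{L^2(\sigma)}^2\ge 1/\kappa_{d'}^2\ge(\sup_j\kappa_j)^{-2}=:c>0$, whence $\max(\|P\|_A^2,\|P\|_B^2)\ge c/2$. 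Taking $C_2:=c/3$ precludes the two bounds holding simultaneously.

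\textbf{Existence.} Using $\frac{1}{n+1}=\int_0^1 s^n\,ds$, applying Parseval pointwise in $s$, and pushing forward by $u=z/(1-z)$ (using $dA(z)=|1+u|^{-4}\,dA(u)$ and $|1-z|^2=|1+u|^{-2}$) converts the weighted sum into a weighted Bergman integral
\[
\sum_{n\ge 0}\tfrac{|P(n)|^2}{n+1}A^{-2n}=\tfrac{A^2}{\pi}\!\int_{\D_1}\!\tfrac{|f(u)|^2}{|1+u|^2}\,dA(u),
\]
with a similar identity on $\D_2$ for the $B$-sum. The full weighted $\ell^2$ norm is thus $\|f\|_{L^2(\tilde\sigma)}^2$ for a single positive area measure $\tilde\sigma$ on $\D_1\cup\D_2$, so the admissible coefficient vectors now form a genuine ellipsoid in $\R^{d+1}$ (rather than an intersection of two ellipsoids). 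The Bergman analog of Widom's asymptotic $\tilde\kappa_d\sim W'\sqrt{d}\,\gamma_{A,B}^{-d}$ for the $\tilde\sigma$-orthonormal leading coefficients causes the $-\tfrac12 d\log d$ factor in $\det^{-1/2}$ to cancel with the matching factor from Stirling for the unit-ball volume $V_{d+1}$, yielding
\[
\log\Vol\bigl(\{c\in\R^{d+1}:\|f\|_{L^2(\tilde\sigma)}\le t\}\bigr)=-\tfrac12\log\gamma_{A,B}\cdot d^2+O(d\log t).
\]
At $\gamma_{A,B}=1$ the right-hand side is $O(d\log t)$, so fixing $t$ larger than some absolute constant $C_3^{1/2}=C_3(A,B)^{1/2}$ makes the volume exceed $2^{d+1}$ for every sufficiently large $d$. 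Minkowski's theorem for symmetric convex bodies then produces a nonzero integer lattice point---a nontrivial IVP satisfying both weighted bounds---for each such degree. Since only finitely many IVPs of each fixed degree satisfy the bounds (see the Remark in the introduction), the resulting polynomials must have unbounded degree, yielding infinitely many distinct IVPs.

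\textbf{Main obstacle.} The crucial technical ingredient is the sharp orthogonal-polynomial asymptotics on the disconnected set $\D_1\cup\D_2$: one needs the uniform bound $\sup_d\kappa_d<\infty$ in Part 1 (rather than the weaker $\kappa_d^{1/d}\to\gamma_{A,B}^{-1}$), and the Bergman rate $\tilde\kappa_d=O(\sqrt d\,)$ in Part 2. Both are available from Widom's theory, and its Bergman-space analogs, for sufficiently regular measures on finite unions of analytic components; since $\sigma$ and $\tilde\sigma$ are absolutely continuous with positive real-analytic densities on each of the two disks, the required hypotheses are met. Everything else is routine bookkeeping.
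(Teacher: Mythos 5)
Your proposal follows the paper's own strategy almost step for step: pushing the generating function to the circles/disks $\T_1\cup\T_2$ and $\D_1\cup\D_2$ via a M\"obius substitution, invoking Szeg\H{o}/Widom asymptotics on disconnected analytic curves for the non-existence claim, and the Bergman analogue plus Minkowski/van der Corput for the existence claim. Two small imprecisions are worth noting, though neither invalidates the argument. First, for a union of two analytic Jordan curves the leading coefficients of the orthonormal polynomials satisfy $\kappa_d\asymp\gamma_{A,B}^{-d}$ but they do \emph{not} converge as $\kappa_d\sim W\gamma_{A,B}^{-d}$ with a single limiting constant $W$; the correct statement (and what the paper quotes from Widom) is that $\kappa_d\gamma_{A,B}^d$ oscillates between two positive constants depending on $\eta$ only. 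Your conclusion $\sup_d\kappa_d<\infty$ at $\gamma_{A,B}=1$ still follows, but the intermediate ``$\sim$'' is wrong as stated. Second, the admissible coefficient vectors in the existence part form the \emph{intersection} of two ellipsoids (one per weighted sum), not a single ellipsoid as you write; the argument goes through because that intersection contains the single ``sum'' ellipsoid $\{\|f\|_{L^2(\tilde\sigma)}^2\le C_3\}$ to which your volume estimate and Minkowski's theorem apply, but the claim that the set ``now forms a genuine ellipsoid'' should be replaced by this one-sided containment, which is exactly how the paper handles it.
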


\subsection*{Notation}
We will use the following notation for the disk and the circle.
$\D(a,r) = \{ z \in \C \colon |z-a| < r \}$, $\T(a,r) = \partial \D(a,r)$. We also write $\D = \D(0,1)$, $\T = \partial \D$ for the unit disk and unit circle.

We write $\dd|z|$ to denote integration with respect to arc-length measure, and $\dd A(z)$ denotes integration with respect to area measure (both are not normalized). That is, if $\G$ is a (rectifiable) curve, and $G$ is a domain, then
\[
\int_\G 1 \, \dd |z| = \textrm{length}(\G), \qquad \int_G 1 \, \dd A(z) = \textrm{area}(G).
\]

\section{Outline of the proofs of Theorems \ref{thm:IVP_thres_infty} and \ref{thm:IVP_thres_elltwo}}
Here we give a summary of the ideas behind the proofs of the theorems with a growth condition on the natural numbers. Using the binomial polynomial basis
\[
\binom{z}{k} = \frac{z(z-1)\dots(z-k+1)}{k!}, \quad k \in \N,
\]
we let
\[
P(z) = P_{\bf c}(z) = \sum_{k=0}^d c_k \binom{z}{k}
\]
be a polynomial of degree $d$. The polynomial $P_{\bf c}$ is integer-valued if and only if ${\bf c} = (c_0, \dots, c_d) \in \Z^{d+1}$. The key observation is that the \emph{generating function}
\[
g_P(z) \defeq \sum_{n\in\N} P(n) z^n = \frac{1}{1-z} \sum_{k=0}^d c_k \left( \frac{z}{1-z} \right)^k, \qquad \forall |z| < 1,
\]
 can be utilized to obtain information on $\textbf{c}$ given the sequence $\{ P(n) \}_{n\in\N}$ and vice versa.

\subsection{The \texorpdfstring{$\ell^\infty$}{linf} constraint}
Let $t > 0$. We first consider the (easier) problem of the asymptotic volume of the set
\[
\left\{ \textbf{c} \in \R^{d+1} \colon \sup_{n\in\N} |P_\textbf{c}(n) A^{-n}| \le t \right\}.
\]
Using theorems by Ball and Vaaler, and some approximations (see Section~\ref{sec:prelim} for the details), we will arrive at the following result
\[
\frac{t^{d+1}}{\sqrt{\det \cM}} \le \vol{ \Big\{ \textbf{c} \in \R^{d+1} \colon \sup_{n\in \N} | P_{\bf c}(n) r^n | \le \tfrac12 t \Big\} } \le \frac{C_1^{d+1} t^{d+1}}{\sqrt{\det \cM}},
\]
where $C_1 > 0$ is an absolute constant, and
\[
\cM = \cM(d;r) = \left[ \sum_{n=0}^\infty \binom{n}{j} \binom{n}{k} r^{2n} \right]_{j,k=0}^{d}.
\]

Observe that by the orthogonality of the monomials
\[
\begin{aligned}
\sum_{n=0}^\infty P_1(n) P_2(n) r^{2n} & = \frac{1}{2 \pi r} \int_{r \T} \left[ \sum_{m=0}^\infty P_1(m) w^m \right] \left[ \sum_{n=0}^\infty P_2(n) \bar{w}^n \right] \dd |w| \\
& = \frac{1}{2 \pi r} \int_{r \T} g_{P_1}(w) \overline{g_{P_2}(w)} \, \dd |w|,
\end{aligned}
\]
where the integrals are with respect to arc-length.

By a change of variables, using the fractional linear transformation $w = z/(1-z)$, we conclude that
\[
\cM = \left[ \frac{1}{2 \pi r} \int_{\T_1} w^j \bar{w}^k \, \dd |w| \right]_{j,k=0}^{d}
\]
is a \emph{moment matrix}, where
\[
\D_1 = \D\left(\frac{r^2}{1-r^2},\frac{r}{1-r^2}\right), \quad
\T_1 = \partial \D_1 = \T\left(\frac{r^2}{1-r^2},\frac{r}{1-r^2}\right).
\]

It is known that (see e.g. Appendix \ref{sec:orthoPoly})
\[
\det \cM = \prod_{k=0}^d \| q_k \|^2_{L^2(\mu)},
\]
where $q_k$ is the \emph{monic} orthogonal polynomial (of degree $k$) with respect to $\mu$ -- the arc-length measure on $\T_1$ (weighted by $\frac{1}{2 \pi r}$). These polynomials and their norms are given explicitly by
\[
q_k(w) = \big( w - \tfrac{r^2}{1-r^2} \big)^k,
\qquad
\| q_k \|^2_{L^2(\mu)} = \frac1r \Big( \frac{r}{1-r^2} \Big)^{2k+1},
\]
hence
\[
\det \cM = \frac{1}{r^{d+1}} \Big( \frac{r}{1-r^2} \Big)^{(d+1)^2}.
\]
The critical threshold is achieved when $\frac{r}{1-r^2} = 1$, that is $r = \goldR^{-1}$. Theorem~\ref{thm:IVP_thres_infty} which concerns lattice points is obtained using a related theorem of Vaaler.

\subsection{The \texorpdfstring{$\ell^2$}{l2} constraint}
Here we have, for $r < 1$,
\[
\frac{1}{2 \pi r} \int_{r \T} |g_P(z)|^2 \dd |z| = \sum_{n=0}^\infty |P(n)|^2 r^{2n} = \| a_n \|^2_{\ell^2}, 
\]
and
\[
\frac{1}{\pi r^2} \int_{r \D} |g_P(z)|^2 \dd A(z) = \sum_{n=0}^\infty \frac{|P(n)|^2}{n+1} r^{2n} = \Big\| \frac{a_n}{\sqrt{n+1}} \Big\|^2_{\ell^2},
\]
where we write $a_n = a_n(r) = P(n) r^n$, and the second integral is with respect to planar Lebesgue measure.

Again making the change of variables $w = z/(1-z)$, we find that
\[
\| a_n \|^2_{\ell^2} = \frac{1}{2 \pi r} \int_{\T_1} |h_\textbf{c}(w)|^2 \dd |w|,
\qquad \Big\| \frac{a_n}{\sqrt{n+1}} \Big\|^2_{\ell^2} = 
\frac{1}{\pi r^2} \int_{\D_1} |h_\textbf{c}(w)|^2 \frac{\dd A(w)}{|1+w|^2},
\]
where $\D_1$ and $\T_1$ are as above, and
\[
h_{\bf c}(w) = \sum_{k=0}^d c_k w^k.
\]
Let $q_k$ be the (Szeg\H{o}) monic orthogonal polynomial with respect to the measure $\mu$ as above, and $t_k$ be the (Bergman) monic orthogonal polynomial, with respect to weighted area measure
\[
\dd \nu(w) = \frac{\dd A(w)}{\pi r^2 |1+w|^2}\quad\text{on}\quad \D_1.
\]
We are not aware of any simple formula for the $t_k$s (see e.g. \cite[Remark 8.1]{BergmanPoly}). However, by results of Korovkin (see Section \ref{subsec:volume_estimates}) we have, with $r = \goldR^{-1}$, and some $q \in (0,1)$,
\[
\| t_k \|^2_{L^2(\nu)} = \frac{1}{k+1} + O(q^k).
\]

We obtain the correspondences
\[
\sum_{n=0}^\infty |P_{\bf c}(n)|^2 \goldR^{-2n} \le t^2 \iff
\sum_{k=0}^d |\alpha_k({\bf c})|^2 \| q_k \|^2_{L^2(\mu)} =
\goldR \sum_{k=0}^d |\alpha_k({\bf c})|^2 \le t^2,
\]
where $h_{\bf c} = \sum_{k=0}^d \alpha_k({\bf c}) q_k$, and
\[
\sum_{n=0}^\infty \frac{|P(n)|^2}{n+1} \goldR^{-2n} \le t^2 \iff
\sum_{k=0}^d |\beta_k({\bf c})|^2 \| t_k \|^2_{L^2(\nu)} \le t^2,
\]
where now $h_{\bf c} = \sum_{k=0}^d \beta_k({\bf c}) t_k$. The proof of the first part of Theorem~\ref{thm:IVP_thres_elltwo} follows by observing that since $q_k$ are monic polynomials, if $h_{\bf c}$ has integer coefficients, then the first, in \emph{decreasing} order of $k$, non-zero coefficient $\alpha_k({\bf c})$ is the same integer. The second part of the theorem is deduced from a classical theorem of van der Corput on the existence of lattice points inside symmetric convex bodies with a large volume.

\section{Preliminaries}\label{sec:prelim}

\subsection{Generating functions}
To an algebraic polynomial $P$ we associate the generating function
\begin{equation}\label{eq:gen_func_gP}
g(z) = g_P(z) = \sum_{n=0}^\infty P(n) z^n,    
\end{equation}
which converges for $|z| < 1$. We also denote by $g_k$ the generating function of the binomial polynomial $\binom{\cdot}{k}$. We leave the proof of the following claim as an exercise to the reader.
\begin{clm}\label{clm:binom_generating_func}
For $|z| < 1$ and $k\ge 0$, we have
\[
g_k(z) = \frac{1}{1-z} \left( \frac{z}{1-z} \right)^k.
\]
\end{clm}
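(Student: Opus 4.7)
The plan is to prove the closed form
\[
g_k(z) = \sum_{n=0}^\infty \binom{n}{k} z^n = \frac{z^k}{(1-z)^{k+1}}
\]
for $|z|<1$, and then observe that the right hand side factors as $\tfrac{1}{1-z} \bigl(\tfrac{z}{1-z}\bigr)^k$, which is the form stated in the claim. Since the series defining $\binom{x}{k}$ is a polynomial of degree $k$ in $x$, the terms $\binom{n}{k} z^n$ are dominated by $n^k |z|^n$, so the series converges absolutely on $|z|<1$ and all term-by-term manipulations below are justified.

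The cleanest route is via differentiation of the geometric series. Starting from $\sum_{n\ge 0} z^n = (1-z)^{-1}$, I would differentiate $k$ times with respect to $z$, yielding
\[
\sum_{n \ge k} n(n-1)\cdots(n-k+1)\, z^{n-k} = \frac{k!}{(1-z)^{k+1}}.
\]
Dividing by $k!$ and using $\binom{n}{k} = n(n-1)\cdots(n-k+1)/k!$, together with the convention $\binom{n}{k}=0$ for $0 \le n < k$, I get
\[
\sum_{n\ge 0} \binom{n}{k} z^{n-k} = \frac{1}{(1-z)^{k+1}},
\]
and multiplying by $z^k$ proves the displayed closed form. (An equally short alternative is an induction on $k$: the base case $k=0$ is the geometric series, and the inductive step follows from $\binom{n}{k} = \binom{n-1}{k-1}+\binom{n-1}{k}$, which turns the generating function identity into $g_k(z) = z\,g_{k-1}(z) + z\,g_k(z)$, giving $g_k(z) = \tfrac{z}{1-z}\, g_{k-1}(z)$.)

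There is no real obstacle here — the claim is a textbook generating-function identity, which is why the authors leave it as an exercise. The only point worth being careful about is the lower index of summation when differentiating, and the use of the convention $\binom{n}{k}=0$ for $n<k$ to identify $\sum_{n\ge k}$ with $\sum_{n\ge 0}$.
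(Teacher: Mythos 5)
Your proof is correct. The paper explicitly leaves this claim as an exercise to the reader, so there is no proof in the paper to compare against; both your primary argument (differentiating the geometric series $k$ times and dividing by $k!$) and your parenthetical alternative (induction via Pascal's rule, giving $g_k = \tfrac{z}{1-z}\,g_{k-1}$) are standard, complete, and valid for $|z|<1$, where term-by-term differentiation inside the disk of convergence is automatic.
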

The above claim provides an analytic continuation of $g_P$ to a meromorphic function.
\begin{clm}
For any polynomial $P$, the function $g_P$ is analytic in $\C \setminus \{1\}$ with $g(\infty) = 0$, and in addition
\begin{equation}\label{eq:gen_func_gP_Laurent}
g_P(z) = -\sum_{n=1}^\infty \frac{P(-n)}{z^n} \quad \mbox{for all } |z| > 1.     
\end{equation}
\end{clm}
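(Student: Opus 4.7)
The plan is to reduce the claim to the basis case $P(x) = \binom{x}{k}$ by linearity, since both assertions (meromorphic extension to $\C\setminus\{1\}$ with $g(\infty)=0$, and the Laurent expansion on $|z|>1$) are linear in $P$. Writing $P=\sum_{k=0}^{d}c_k\binom{\cdot}{k}$, linearity of $P\mapsto g_P$ reduces everything to handling each $g_k$ separately.

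For the first assertion, I invoke the formula $g_k(z)=\frac{z^k}{(1-z)^{k+1}}$ from Claim~\ref{clm:binom_generating_func}. This is a rational function whose only pole is at $z=1$ and whose numerator has degree strictly less than the denominator, so it is analytic on $\C\setminus\{1\}$ and vanishes at infinity. Summing over $k$ gives the same properties for $g_P$.

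For the Laurent expansion \eqref{eq:gen_func_gP_Laurent}, by the same linearity it suffices to verify it for $P(x)=\binom{x}{k}$. The plan is to manipulate $g_k$ directly on $|z|>1$: factor $(1-z)^{k+1}=(-1)^{k+1}z^{k+1}(1-1/z)^{k+1}$ and expand using the standard geometric-type series $(1-u)^{-(k+1)}=\sum_{m\ge 0}\binom{m+k}{k}u^m$ with $u=1/z$, which converges precisely for $|z|>1$. A short reindexing $n=m+1$ yields
\[
g_k(z)=(-1)^{k+1}\sum_{n=1}^{\infty}\binom{n+k-1}{k}z^{-n}.
\]
On the other side, the identity $\binom{-n}{k}=(-1)^k\binom{n+k-1}{k}$ gives $-\binom{-n}{k}=(-1)^{k+1}\binom{n+k-1}{k}$, so the two Laurent series agree coefficient-by-coefficient, as required. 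As an optional sanity check, one can recover the same coefficients by residue calculus: since $g_P$ is analytic on $\C\setminus\{1\}$ and vanishes at $\infty$, the Laurent coefficient of $z^{-n}$ on $|z|>1$ equals $\mathrm{Res}_{z=1}g_P(z)z^{n-1}$, and for $g_k$ a direct derivative computation of the order-$(k+1)$ pole again produces $(-1)^{k+1}\binom{n+k-1}{k}$.

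The computation is routine and there is no serious obstacle: the entire content is a basis reduction followed by one binomial-series manipulation. The only thing to be careful about is the bookkeeping of the $(-1)^{k+1}$ factors—both the one coming from $(1-z)^{k+1}=(-1)^{k+1}(z-1)^{k+1}$ and the one from $\binom{-n}{k}=(-1)^k\binom{n+k-1}{k}$—which must be matched to confirm the claimed minus sign in front of the Laurent series.
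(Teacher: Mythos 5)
Your proof is correct, and it takes a genuinely different and more elementary route than the paper's.

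Both proofs reduce to the binomial basis $P=\binom{\cdot}{k}$ by linearity and both invoke Claim~\ref{clm:binom_generating_func} to identify $g_k(z)=z^k/(1-z)^{k+1}$, a rational function analytic on $\C\setminus\{1\}$ vanishing at $\infty$. For the Laurent expansion, however, the paper takes a scenic route: it represents $\binom{n}{k}$ as a Cauchy-type contour integral around $w=1$, promotes the exponent $n$ to a complex variable, shows by a polynomial-growth-plus-Liouville argument that the resulting entire function is the polynomial $\binom{\cdot}{k}$ (so that the integral also computes $\binom{-n}{k}$), and then performs the substitution $\zeta=1/w$ to read off the Laurent series. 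You instead expand the known rational function $g_k$ directly at $\infty$ via $(1-z)^{k+1}=(-1)^{k+1}z^{k+1}(1-1/z)^{k+1}$ and the binomial series $(1-u)^{-(k+1)}=\sum_{m\ge0}\binom{m+k}{k}u^m$, and match coefficients using the elementary identity $\binom{-n}{k}=(-1)^k\binom{n+k-1}{k}$; the $(-1)^{k+1}$ from the factorization and the $(-1)^k$ from the identity combine to produce the leading minus sign. The bookkeeping checks out: $g_k(z)=(-1)^{k+1}\sum_{n\ge1}\binom{n+k-1}{k}z^{-n}=-\sum_{n\ge1}\binom{-n}{k}z^{-n}$. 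Your computation is shorter and relies only on classical power-series manipulations, whereas the paper's method is more portable (it is the kind of argument that would still work if one only had an integral representation of the sequence rather than a closed-form rational generating function, which is presumably why the authors cite the more general framework of Levin); for the present claim, yours is the cleaner path.
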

\begin{rmk}
For a more general claim see [Levin, Lecture $10$, Remark $1$].
\end{rmk}
\begin{proof}
Any polynomial $P$ of degree $d$ can be written in a unique way as
\[
P(x) = \sum_{k=0}^d c_k \binom{x}{k},
\]
hence it is sufficient to prove the claim for binomials.
By Claim \ref{clm:binom_generating_func}, for $k \in \N$ we have that
\[
g_k(z) = \sum_{n\in\N} \binom{n}{k} z^n = \frac{z^k}{(1-z)^{k+1}}, \qquad |z| < 1,
\]
where the right hand side is analytic in $z \in \C \setminus \{1\}$ and vanishes at infinity. Let $\veps \in (0,1)$. Using the vanishing of $g_k$ at infinity, and Cauchy's integral formula, we have
\[
\binom{n}{k} = \frac{1}{2\pi i} \int_{|w|=\veps} \frac{w^k}{(1-w)^{k+1}} \frac{\dd w}{w^{n+1}} = -\frac{1}{2\pi i} \int_{|w-1|=\veps} \frac{w^k}{(1-w)^{k+1}} \frac{\dd w}{w^{n+1}}.
\]

We will use $w^z = \exp(z \log w)$, where the principal branch of $\log$, cut along the ray $(-\infty, 0]$ is chosen. Observe that with this choice,
\[
Q(z) \defeq -\frac{1}{2\pi i} \int_{|w-1|=\veps} \frac{w^k}{(1-w)^{k+1}} \frac{\dd w}{w^{z+1}},  
\]
is an entire function with at most polynomial growth at infinity (this can be seen by choosing $\veps$ of order $|z|^{-1}$). Hence by Liouville's theorem $Q$ \emph{is} a polynomial. Since the polynomials $Q$ and $\binom{\cdot}{k}$ coincide on the (infinite) set $\N$, we conclude they are equal. In particular, for $n \in \N$,
\[
\binom{-n}{k} = -\frac{1}{2\pi i} \int_{|w-1|=\veps} \frac{w^{k+n}}{(1-w)^{k+1}} \frac{\dd w}{w}.
\]
By making a change of variables $\zeta = 1/w$, so that $\dd w / w = -\dd \zeta / \zeta$, and shifting the contour we find
\[
\binom{-n}{k} = \frac{1}{2\pi i} \int_{|\zeta^{-1}-1|=\veps} \frac{1}{(\zeta-1)^{k+1}} \frac{\dd \zeta}{\zeta^n} = \frac{1}{2\pi i} \int_{|\zeta|=\veps} \left[-\frac{1}{(\zeta-1)^{k+1}} \right] \frac{\dd \zeta}{\zeta^n}.
\]
Therefore, again using Cauchy's formula, for sufficiently small $|\zeta|$,
\[
\sum_{n\ge 1} \binom{-n}{k} \zeta^{n-1} = -\frac{1}{(\zeta - 1)^{k+1}} \implies
\sum_{n\ge 1} \binom{-n}{k} \zeta^{n} = -\frac{\zeta}{(\zeta - 1)^{k+1}}.
\]
Finally, putting $z = 1/\zeta$ we obtain
\[
\sum_{n\ge 1} \binom{-n}{k} z^{-n} = -\frac{z^k}{(1-z)^{k+1}} = -g_k(z), \quad \text{ where } |z| > 1,
\]
as claimed.
\end{proof}

The following properties of the generating function are crucial for us.

\begin{cor}\label{cor:integral_GF}
For a polynomial $P(z) = \sum_{k=0}^d c_k \binom{z}{k}$ of degree $d$, the function $g_P$ can be analytically continued to $\C \setminus \{1\}$ so that
\[
g_P(z) = \frac{1}{1-z} \sum_{n=0}^d c_k \left( \frac{z}{1-z} \right)^k, \quad z \in \C \setminus \{1\}.
\]
Moreover, for $r > 0$,
\[
\frac{1}{2 \pi r} \int_{r \T} |g_P(w)|^2 \dd |w| = \begin{cases}
\sum_{n=0}^\infty |P(n)|^2 r^{2n}, & r < 1;\\
\sum_{n=1}^\infty |P(-n)|^2 r^{-2n}, & r > 1,
\end{cases}
\]
and
\begin{align*}
\frac{1}{\pi r^2} \int_{r \D} |g_P(w)|^2 \dd A(w) & = \sum_{n=0}^\infty \frac{|P(n)|^2}{n+1} r^{2n},\quad \text{when } r < 1,\\
\frac{r^2}{\pi} \int_{(r \D)^c} |g_P(w)|^2 \, \frac{\dd A(w)}{|w|^4} & =  \sum_{n=1}^\infty \frac{|P(-n)|^2}{n+1} r^{-2n},\quad \text{when } r > 1.
\end{align*}
\end{cor}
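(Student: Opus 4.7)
\medskip

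The first assertion is essentially immediate: since $g_P$ depends linearly on the coefficients $c_k$ of $P$ in the binomial basis, and since Claim~\ref{clm:binom_generating_func} gives the explicit formula $g_k(z) = \frac{1}{1-z}\bigl(\frac{z}{1-z}\bigr)^k$ which is meromorphic on $\C$ with a single pole at $z=1$ and a zero at infinity, summing these formulas yields the stated analytic continuation on $\C\setminus\{1\}$.

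For the arc-length identity, I would use the Fourier/Parseval orthogonality of the monomials $\{z^n\}_{n\in\Z}$ on $r\T$. In the regime $r<1$, the series $g_P(z)=\sum_{n\ge 0} P(n)z^n$ converges uniformly on $r\T$, so term-by-term integration against itself gives
\[
\frac{1}{2\pi r}\int_{r\T}|g_P|^2\,\dd|z| \;=\; \sum_{n\ge 0}|P(n)|^2 r^{2n},
\]
using $\frac{1}{2\pi r}\int_{r\T}w^m\overline{w}^n\,\dd|w|=r^{2n}\delta_{mn}$. For $r>1$, one invokes the Laurent expansion \eqref{eq:gen_func_gP_Laurent}, $g_P(z)=-\sum_{n\ge 1}P(-n)z^{-n}$, and applies the same orthogonality (now for negative powers) to obtain $\sum_{n\ge 1}|P(-n)|^2 r^{-2n}$.

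The area identity for $r<1$ uses the corresponding orthogonality on the disk: $\frac{1}{\pi r^2}\int_{r\D}w^m\overline{w}^n\,\dd A(w) = \frac{r^{2n}}{n+1}\delta_{mn}$, which when applied to the Taylor series of $g_P$ gives $\sum_{n\ge 0}\frac{|P(n)|^2}{n+1}r^{2n}$. The case $r>1$ is handled by reduction to the previous case via the change of variables $\zeta=1/w$. A direct computation shows $\dd A(w)/|w|^4 = \dd A(\zeta)$, while $(r\D)^c$ is mapped to $r^{-1}\D$, and the Laurent series \eqref{eq:gen_func_gP_Laurent} becomes $g_P(1/\zeta)=-\sum_{n\ge 1}P(-n)\zeta^n$. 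Thus
\[
\frac{r^2}{\pi}\int_{(r\D)^c}|g_P(w)|^2\,\frac{\dd A(w)}{|w|^4} \;=\; \frac{r^2}{\pi}\int_{r^{-1}\D}\Bigl|\sum_{n\ge 1}P(-n)\zeta^n\Bigr|^2\dd A(\zeta),
\]
and the case-$r<1$ area formula applied to this (with radius $r^{-1}$) produces $\sum_{n\ge 1}\frac{|P(-n)|^2}{n+1}r^{-2n}$, as claimed.

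None of these steps is genuinely hard; the only point that needs care is the Jacobian bookkeeping in the inversion $\zeta=1/w$, where one must verify that the weight $|w|^{-4}$ precisely cancels the Jacobian so that area measure is preserved. Everything else is the standard orthogonality of monomials on circles and disks combined with the two representations of $g_P$, inside and outside the unit circle.
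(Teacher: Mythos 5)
Your proof is correct and takes essentially the same route as the paper: both rely on the explicit formula for $g_k$ from Claim~\ref{clm:binom_generating_func} for the analytic continuation, and on the orthogonality of the monomials $w^j\bar w^k$ on circles and disks applied to the two expansions \eqref{eq:gen_func_gP} and \eqref{eq:gen_func_gP_Laurent}. The only (minor) deviation is in the last identity: the paper directly records the orthogonality relation $\frac{1}{\pi}\int_{(r\D)^c}w^{-j}\bar w^{-k}\,\frac{\dd A(w)}{|w|^4}=\frac{r^{-2(j+1)}}{j+1}\delta_{j,k}$ for negative powers on the exterior, whereas you reduce that case to the interior one via $\zeta=1/w$ — which is exactly the conformal-map viewpoint the paper's remark alludes to — and your Jacobian bookkeeping there checks out.
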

\begin{rmk}
Under the conformal map $z \mapsto \tfrac1z$ the area measure $\dd A(z)$ transforms to $\frac{\dd A(z)}{|z|^4}$.
\end{rmk}
\begin{proof}
The first part follows from the previous claims. The second part follows by expanding $|g_P(w)|^2$ as in \eqref{eq:gen_func_gP} and \eqref{eq:gen_func_gP_Laurent} in the variables $w, \overline{w}$ and observing that
\[
\frac{1}{2 \pi r} \int_{r \T} w^j \overline{w}^k \dd |w| = r^{2 j} \delta_{j,k}, \quad \text{when } j,k \in \Z.
\]
Similarly, the formulas with respect to area measure follow from
\[
\frac{1}{\pi} \int_{r \D} w^j \overline{w}^k \dd A(w) = \frac{r^{2(j+1)}}{j+1} \delta_{j,k}, \quad \text{when } r<1 \text{ and } j,k \in \N,
\]
and
\[
\frac{1}{\pi} \int_{(r \D)^c} \frac{1}{w^{j}} \frac{1}{\overline{w}^{k}} \, \frac{\dd A(w)}{|w|^4}  = \frac{r^{-2(j+1)}}{j+1} \delta_{j,k}, \quad \text{when } r>1 \text{ and } j,k \in \N.
\]
\end{proof}

The next result follows from the above by polarization. We leave the details to the reader.
\begin{cor}\label{cor:innerProd_GF}
For $j, k\ge 0$ we have,
\[
\sum_{n=0}^\infty \binom{n}{j} \binom{n}{k} r^{2n} = \frac{1}{2\pi r} \int_{r \T} g_j(w) \overline{g_k(w)} \dd |w|, \quad r \in (0,1),
\]
and
\[
\sum_{n=1}^\infty \binom{-n}{j} \binom{-n}{k} r^{-2n} = \frac{1}{2\pi r} \int_{r \T} g_j(w) \overline{g_k(w)} \dd |w|, \qquad r > 1.
\]
\end{cor}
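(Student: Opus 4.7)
The plan is to deduce this directly from Corollary~\ref{cor:integral_GF} by polarization, exactly as the remark in the excerpt suggests. Both sides of the identity
\[
\frac{1}{2\pi r}\int_{r\T} |g_P(w)|^2\,\dd |w| \;=\; \sum_{n=0}^\infty |P(n)|^2 r^{2n}, \qquad r\in(0,1),
\]
are Hermitian quadratic forms in $P$ (thought of as an element of the finite-dimensional space of polynomials of degree at most $\max(j,k)$). Since a Hermitian form is determined by its associated quadratic form via the usual polarization identity, one obtains the bilinear relation
\[
\frac{1}{2\pi r}\int_{r\T} g_{P_1}(w)\,\overline{g_{P_2}(w)}\,\dd|w| \;=\; \sum_{n=0}^\infty P_1(n)\,\overline{P_2(n)}\, r^{2n}
\]
for all polynomials $P_1,P_2$ and $r\in(0,1)$. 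Specializing to the real-valued binomial polynomials $P_1(z)=\binom{z}{j}$ and $P_2(z)=\binom{z}{k}$, the conjugation on the right is inert and we recover the first claimed formula, using that $g_{P_i}=g_{j}$ or $g_k$ by Claim~\ref{clm:binom_generating_func}.

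For the second identity, I would repeat the same argument starting from the $r>1$ case of Corollary~\ref{cor:integral_GF}, namely
\[
\frac{1}{2\pi r}\int_{r\T} |g_P(w)|^2\,\dd|w| \;=\; \sum_{n=1}^\infty |P(-n)|^2 r^{-2n}, \qquad r>1,
\]
whose polarized form is
\[
\frac{1}{2\pi r}\int_{r\T} g_{P_1}(w)\,\overline{g_{P_2}(w)}\,\dd|w| \;=\; \sum_{n=1}^\infty P_1(-n)\,\overline{P_2(-n)}\, r^{-2n}.
\]
Applying this with the two binomial polynomials and dropping the (trivial) conjugation yields the second formula.

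Alternatively, one can bypass polarization entirely by expanding $g_j(w)$ as a Taylor series in $w$ (when $r<1$, using Claim~\ref{clm:binom_generating_func}) or as a Laurent series in $1/w$ (when $r>1$, using \eqref{eq:gen_func_gP_Laurent}), multiplying with the corresponding expansion of $\overline{g_k(w)}$, and integrating term-by-term against the orthogonality relation $\frac{1}{2\pi r}\int_{r\T} w^a \overline{w}^b\,\dd|w|= r^{2a}\delta_{a,b}$ already used in the proof of Corollary~\ref{cor:integral_GF}. There is no substantive obstacle here; the only care needed is to justify the interchange of sum and integral, which follows from uniform convergence of the relevant Taylor/Laurent series on the circle $r\T$ (which lies strictly inside or outside the disk of convergence).
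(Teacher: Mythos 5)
Your main argument (polarization of the quadratic identities in Corollary~\ref{cor:integral_GF}, then specialization to $P_1=\binom{\cdot}{j}$, $P_2=\binom{\cdot}{k}$ using Claim~\ref{clm:binom_generating_func}) is exactly the route the paper takes; the authors explicitly state the corollary ``follows from the above by polarization'' and leave the details to the reader. Your alternative direct argument via term-by-term integration against $\frac{1}{2\pi r}\int_{r\T} w^a\overline{w}^b\,\dd|w|=r^{2a}\delta_{a,b}$ is equally sound and is in fact the mechanism underlying the paper's own proof of Corollary~\ref{cor:integral_GF}, so nothing new is being introduced there either.
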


\subsection{Tools for the \texorpdfstring{$\ell^\infty$}{linf} case}
Let $N, k\in \N$ with $N>k$, and let $\Sigma$ be an $N \times k$ matrix of maximal rank $k$. We use the following version of a result by Vaaler \cite[Thms. 1 and 2]{Vaa79}, see also Gluskin-Kashin \cite{glu92}.

\begin{thm}\label{thm:Vaaler_volume_lattice_pts}
We have that
\[
V_N \defeq \vol{ \Big\{ \textbf{c} = (c_1, \dots, c_k) \in \R^{k} \colon \sup_{n\le N} \Big|\sum_{j=1}^k c_j \Sigma_{n,j} \Big| \le \tfrac12 \Big\} } \ge \frac{1}{\sqrt{\det \Sigma^t \Sigma}}.
\]
Moreover, if $R \in \N$ is such that
\[
R^2 \cdot \det \Sigma^t \Sigma \le 1,
\]
then there are at least $2R$ non-trivial integer solutions $\textbf{c} \in \Z^k$ to the system of inequalities
\[
\left| \sum_{j=1}^k c_j \Sigma_{n,j} \right| \le 1 \qquad \mbox{for all} \quad n\in \{1, \dots, N\}.
\]
\end{thm}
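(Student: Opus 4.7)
The plan is to reduce both parts to (i) Vaaler's cube slicing theorem and (ii) van der Corput's refinement of Minkowski's first theorem, connected by a routine change-of-variables identity.

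First I would exploit the linear-algebra identity that rewrites $V_N$ as a $k$-dimensional section of a cube. Since $\Sigma$ has maximal rank $k$, its image $V = \Sigma(\R^k) \subset \R^N$ is a $k$-dimensional subspace, and the induced map $\Sigma : \R^k \to V$ is a bijection whose Jacobian, with respect to Euclidean $k$-volume on $V$, equals $\sqrt{\det(\Sigma^t \Sigma)}$. Writing $Q = [-\tfrac12, \tfrac12]^N$, the defining constraint $\sup_{n \le N} |\sum_j c_j \Sigma_{n,j}| \le \tfrac12$ is exactly $\Sigma c \in Q$, so the change of variables produces
\[
\sqrt{\det(\Sigma^t \Sigma)} \cdot V_N \;=\; \operatorname{vol}_k(V \cap Q).
\]
At this point I would invoke Vaaler's cube slicing theorem: every central $k$-dimensional linear section of $[-\tfrac12,\tfrac12]^N$ has $k$-volume at least $1$. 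Applied to $V$, this forces $V_N \ge 1/\sqrt{\det(\Sigma^t \Sigma)}$, which is the first claim.

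For the lattice point statement I would double everything and pass to the centrally symmetric convex body
\[
K \;=\; \bigl\{ c \in \R^k : \|\Sigma c\|_\infty \le 1 \bigr\},
\]
for which $\operatorname{vol}(K) = 2^k V_N \ge 2^k / \sqrt{\det(\Sigma^t \Sigma)} \ge 2^k R$ under the hypothesis $R^2 \det(\Sigma^t \Sigma) \le 1$. Van der Corput's extension of Minkowski's theorem asserts that any centrally symmetric convex body in $\R^k$ with volume at least $2^k m$ contains at least $m$ pairs $\{\pm c\}$ of non-zero lattice points. Applying this with $m = R$ (handling the boundary case by a standard slight dilation followed by a compactness argument, so that the limit vectors still satisfy $\|\Sigma c\|_\infty \le 1$) yields the desired $\ge 2R$ non-trivial integer solutions.

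The only genuinely deep ingredient is Vaaler's cube slicing theorem itself, and this is where the hard part lies: its proof proceeds via a Fourier-analytic B-spline identity for the density of a sum of independent uniform random variables, refining earlier cube-slicing estimates of Hensley and Ball. Granting that result as a black box, every remaining step—the Jacobian computation for $\Sigma$, the factor-of-two rescaling, and the appeal to van der Corput—is routine, and I would carry them out essentially as outlined above.
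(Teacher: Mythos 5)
Your proof is correct, and since the paper states this theorem without proof (citing Vaaler \cite{Vaa79} and Gluskin--Kashin \cite{glu92}), the right comparison is with Vaaler's original argument, which your sketch reproduces faithfully. The two ingredients are exactly Vaaler's: identify $\sqrt{\det \Sigma^t\Sigma}\cdot V_N$ with the $k$-volume of the central section $V\cap[-\tfrac12,\tfrac12]^N$ via the area formula for the injective linear map $\Sigma$, invoke the cube-slicing lower bound $\operatorname{vol}_k(V\cap Q)\ge 1$, and then pass from volume to lattice points through van der Corput's refinement of Minkowski's first theorem applied to the closed symmetric body $K=\{c:\|\Sigma c\|_\infty\le 1\}$, handling the non-strict inequality by dilating $K$ slightly and using that $K$ is closed and $(1+\epsilon)K\cap\Z^k$ is a nested family of finite sets intersecting to $K\cap\Z^k$. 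All the routine steps you marked as routine (the Jacobian identity, $\operatorname{vol}(K)=2^kV_N$, the chain $R^2\det\Sigma^t\Sigma\le1\Rightarrow V_N\ge R\Rightarrow\operatorname{vol}(K)\ge 2^kR$) are indeed correct as written.
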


In the other direction we use a result of Ball \cite{Ball89} (also Gluskin-Kashin \cite{glu92}).
\begin{thm}\label{thm:Ball_volume}
We have that
\[
\vol{ \Big\{ \textbf{c} = (c_1, \dots, c_k) \in \R^{k} \colon \sup_{n\le N} \Big|\sum_{j=1}^k c_j \Sigma_{n,j}\Big| \le 1 \Big\} } \le \frac{\sqrt{2}^{N - k}}{\sqrt{\det \Sigma^t \Sigma}}.
\]
\end{thm}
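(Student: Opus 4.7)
The plan is to recognize the volume on the left as a multiple of the $k$-dimensional volume of a central section of a cube in $\R^N$, and then invoke Ball's cube-slicing inequality. Write $\sigma_n \in \R^k$ for the $n$-th row of $\Sigma$, so the body in question is
$B_\Sigma = \{\textbf{c} \in \R^k : |\langle \sigma_n, \textbf{c}\rangle| \le 1 \text{ for all } n \le N\}$,
the preimage under the linear injection $L_\Sigma : \textbf{c} \mapsto \Sigma \textbf{c}$ of the cube $[-1,1]^N$. Setting $E := \Sigma(\R^k) \subset \R^N$, the area formula yields
\[
\vol{B_\Sigma} \;=\; \frac{\mathrm{vol}_k\bigl(E \cap [-1,1]^N\bigr)}{\sqrt{\det (\Sigma^t \Sigma)}},
\]
so the estimate reduces to bounding the $k$-volume of the section $E \cap [-1,1]^N$ by $(\sqrt{2})^{\,N-k}$ (modulo the usual normalization adjustments between the cubes $[-1,1]^N$ and $[-1/2,1/2]^N$).

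Next I would put the section into isotropic (John) position. Let $T := (\Sigma^t \Sigma)^{1/2}$ and $u_n := T^{-1} \sigma_n$; then $\sum_{n=1}^N u_n u_n^t = I_k$ by construction, the map $\textbf{x} \mapsto (\langle u_n, \textbf{x}\rangle)_{n=1}^N$ is an isometric embedding $\R^k \hookrightarrow \R^N$ onto $E$, and the desired slicing inequality becomes
\[
\int_{\R^k} \prod_{n=1}^N \mathbf{1}_{[-1,1]}\bigl(\langle u_n, \textbf{x}\rangle\bigr)\, \mathrm{d}\textbf{x} \;\le\; (\sqrt{2})^{\,N-k}.
\]
Rescaling to unit vectors $\hat u_n := u_n/|u_n|$ with weights $c_n := |u_n|^2$, the identity $\sum_n u_n u_n^t = I_k$ becomes the John decomposition $\sum_n c_n \hat u_n \hat u_n^t = I_k$, and in particular $\sum_n c_n = k$.

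The main step, and the crux of Ball's argument, is now the geometric Brascamp--Lieb inequality for the datum $(\hat u_n, c_n)_{n=1}^N$: for all nonnegative $f_1, \ldots, f_N \in L^1(\R)$,
\[
\int_{\R^k} \prod_{n=1}^N f_n\bigl(\langle \hat u_n, \textbf{x}\rangle\bigr)^{c_n}\, \mathrm{d}\textbf{x} \;\le\; \prod_{n=1}^N \Bigl(\int_\R f_n \Bigr)^{c_n}.
\]
Choosing each $f_n$ to be a Gaussian that dominates $\mathbf{1}_{[-1,1]}^{1/c_n}$ with the best possible constant (a one-variable optimization), substituting, and using $\sum_n c_n = k$, produces the required factor $(\sqrt{2})^{\,N-k}$. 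The main obstacle is precisely this Gaussian-envelope comparison coupled with the John constraint: it is where the constant $\sqrt{2}$ arises from the interplay of $\mathbf{1}_{[-1,1]}$, its Gaussian majorant, and the scalar identity $\sum_n c_n = k$, and it is responsible for the sharpness of Ball's cube-slicing bound.
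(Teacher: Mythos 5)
The paper itself does not prove this statement; it is quoted from Ball \cite{Ball89} and Gluskin--Kashin \cite{glu92}, so there is no internal argument to compare with. Your first reduction is correct (up to the $2^{\pm k}$ normalization mismatch between $[-1,1]^N$ and $[-1/2,1/2]^N$, which you flag): the body is the $\Sigma$-preimage of a cube, and the area formula converts its volume into a $k$-dimensional central section volume divided by $\sqrt{\det\Sigma^t\Sigma}$. The passage to John position $\sum_n c_n\hat u_n\hat u_n^t=I_k$ with $c_n\in[0,1]$, $\sum_n c_n=k$, is also fine.

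The gap is in the last step, and it is not just a missing computation. If you apply the geometric Brascamp--Lieb inequality directly to the indicators, the only admissible choice is $f_n=\ind_{[-1/(2\sqrt{c_n}),\,1/(2\sqrt{c_n})]}$ (so that $f_n^{c_n}$ equals the $n$-th factor of the integrand), and you get
\[
\vol_k\bigl(E\cap[-\tfrac12,\tfrac12]^N\bigr)\ \le\ \prod_{n=1}^N c_n^{-c_n/2}.
\]
This is \emph{not} $\le \sqrt{2}^{\,N-k}$ in general. Take $N=3$, $k=2$, and a plane $E$ with $c_1=c_2=c_3=2/3$ (e.g.\ $E=\{x_1+x_2+x_3=0\}$): the BL bound gives $\prod c_n^{-c_n/2}=3/2>\sqrt 2$, even though the actual hexagonal section has area $\tfrac{3\sqrt3}{4}<\sqrt 2$. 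Moreover, the ``Gaussian envelope'' step cannot rescue this: any Gaussian $g$ with $g\ge\ind_{[-a,a]}$ satisfies $\int g>\int\ind_{[-a,a]}$, so replacing the indicator by a dominating Gaussian only \emph{increases} the BL upper bound. In short, there is no admissible choice of $f_n$ on the cube side that produces the constant $\sqrt 2$.

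Ball's actual argument works on the Fourier side. Write $\pi:\R^N\to E^\perp$ for orthogonal projection and use
$\vol_k(Q\cap E)=(2\pi)^{-(N-k)}\int_{E^\perp}\widehat{\ind_Q}$; since $\widehat{\ind_Q}$ is a product of $\sinc$'s in the coordinates $\langle e_n,\xi\rangle=\langle w_n,\xi\rangle$ with $w_n=\pi(e_n)$, one has a John decomposition $\sum_n d_n\hat w_n\hat w_n^t=I_{E^\perp}$, $d_n=|w_n|^2=1-c_n$, $\sum d_n=N-k$, on $E^\perp$. Brascamp--Lieb (or the equivalent Hölder form) is applied \emph{there}, to the $\sinc$ factors, and the sharp constant then drops out of Ball's key one-dimensional inequality $\int_\R|\sinc(s)|^{p}\,\dd s\le\pi\sqrt{2/p}$ for $p\ge2$; the Gaussian comparison you allude to lives inside the proof of this $\sinc$ lemma, not in a direct majorization of the indicator. (The case $\max_n d_n>\tfrac12$ requires a separate elementary argument.) So the correct road map is: area formula $\Rightarrow$ Fourier duality on $E^\perp$ $\Rightarrow$ BL for $\sinc$'s $\Rightarrow$ Ball's $\sinc$--$L^p$ lemma; your sketch skips the duality step and applies BL to the wrong family of functions.
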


For the convenience of the reader we prove the following standard result in Appendix \ref{sec:orthoPoly}.
\begin{thm}\label{thm:det_moment_matrix}
Let $\mu$ be a measure on $\C$ with infinite support and finite moments of all orders. Define the $n$-th moment matrix by
\[
\cM(n;\mu) = \left[ M_{k,m} \right]_{k,m=0}^n, \qquad M_{k,m} = \int_\C z^j \bar{z}^k \, \dd \mu(z).
\]
Then
\[
\det \cM(n;\mu) = \prod_{j=0}^n \left\| Q_j \right\|^2_{L^2(\mu)},
\]
where $Q_j$ are the orthogonal monic polynomials, with respect to $\mu$.
\end{thm}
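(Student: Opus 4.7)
The plan is to interpret $\cM(n;\mu)$ as the Gram matrix of the monomials $1, z, \ldots, z^n$ in $L^2(\mu)$ and then to diagonalize it by a unimodular change of basis obtained from Gram--Schmidt.

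First I would observe that since $\mu$ has infinite support, the monomials $1, z, \ldots, z^n$ are linearly independent in $L^2(\mu)$ (any polynomial vanishing $\mu$-a.e.\ would have infinitely many zeros). Hence Gram--Schmidt with respect to the inner product $\langle f, g\rangle_\mu = \int f\,\bar{g}\,\dd\mu$ produces uniquely the monic orthogonal polynomials $Q_0, \ldots, Q_n$ with $\deg Q_j = j$. Writing $Q_j(z) = \sum_{i=0}^j a_{ij} z^i$ with $a_{jj} = 1$, the matrix $A = (a_{ij})_{i,j=0}^n$ is upper triangular with unit diagonal, so $\det A = 1$.

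Second, expanding the inner products in the monomial basis gives
\[
\langle Q_j, Q_k\rangle_\mu \;=\; \sum_{i,\ell=0}^n a_{ij}\,\overline{a_{\ell k}}\,M_{i,\ell} \;=\; \bigl(A^{T}\,\cM(n;\mu)\,\overline{A}\bigr)_{j,k},
\]
while by orthogonality the left-hand side equals $\|Q_j\|^2_{L^2(\mu)}\,\delta_{jk}$. Therefore $A^{T}\,\cM(n;\mu)\,\overline{A} = D$, where $D = \operatorname{diag}\bigl(\|Q_0\|^2_{L^2(\mu)}, \ldots, \|Q_n\|^2_{L^2(\mu)}\bigr)$. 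Taking determinants and using $|\det A|^2 = 1$ yields
\[
\det \cM(n;\mu) \;=\; \det D \;=\; \prod_{j=0}^n \|Q_j\|^2_{L^2(\mu)},
\]
as claimed.

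I do not anticipate any substantive obstacle: the entire argument is linear algebra, and the only hypothesis doing real work is the infinite support of $\mu$, which guarantees that Gram--Schmidt does not terminate prematurely (equivalently, that every leading principal minor of $\cM(n;\mu)$ is positive, so the inductive construction of the $Q_j$'s is well-defined at every degree).
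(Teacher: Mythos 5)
Your proof is correct, but it takes a genuinely different route from the paper's. The paper proceeds via Heine's determinantal formula: it first shows $\cM(n;\mu)$ is positive definite, then \emph{defines} the monic orthogonal polynomial $Q_n$ explicitly as a $(n+1)\times(n+1)$ determinant with the row $(1, z, \dots, z^n)$ at the bottom, normalized by $D_{n-1}$; it verifies orthogonality and the norm identity $\|Q_n\|^2_{L^2(\mu)} = D_n/D_{n-1}$ by expanding along the last row, and concludes by telescoping. Your argument is the Gram--Schmidt (equivalently $LDL^*$-factorization) approach: you exhibit the unit-upper-triangular change of basis $A$ from monomials to the $Q_j$'s and read off $\det\cM = \det D$ directly from $A^T\cM\bar A = D$. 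The paper's route has the advantage of producing an explicit closed-form expression for $Q_n$ in terms of the moments (which can be useful elsewhere), while yours is more streamlined and makes the positive-definiteness of $\cM(n;\mu)$ a visible corollary ($\det D > 0$) rather than a preliminary lemma. Both correctly isolate the role of the ``infinite support'' hypothesis, namely that the monomials $1, \dots, z^n$ remain linearly independent in $L^2(\mu)$.

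One small point of care in your write-up: your index manipulation $\langle Q_j, Q_k\rangle_\mu = (A^T\cM\bar A)_{jk}$ uses the convention $M_{i,\ell} = \int z^i\bar z^\ell\,\dd\mu$ (as in the paper's Appendix~\ref{sec:orthoPoly}); the statement of the theorem as printed has a typo ($z^j\bar z^k$ with unbound $j$), so it is worth stating your convention explicitly so the reader can follow the matrix identity.
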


Let $\eta \in (0,1)$. Suppose $\G = \T_1 \cup \T_2$ is the union of two circles with \emph{disjoint} interiors, whose radii are bounded between $\eta$ and $\frac{1}{\eta}$, and are separated by at least $\eta$ and at most $\frac{1}{\eta}$. Denote the $n \times n$ moment matrix, with respect to arc-length measure on $\G$, by $\cM_{\G}(n)$. We will need an effective lower bound for the smallest eigenvalue of $\cM_{\G}(n)$, which is given by
\[
\lambda_{n}^{-1} = \max \left\{ \frac{1}{2\pi} \int_{-\pi}^\pi \left| P(e^{i\theta}) \right|^2 \dd \theta \, \colon \, \deg P = n, \,\, \int_\G |P(z)|^2 |\dd z| = 1  \right\}.
\]
The following result is obtained by combining \cite[Theorem 7.3 and Eq. (10.5)]{Widom69}.
\begin{prop}\label{prop:bound_least_eigenvalue}
Using the same notation as above, and under the same assumptions, we have
\[
\lambda_{n} \ge \exp(-C_{\G} n),
\]
where the constant $C_{\G} > 0$ depends continuously on the Green function of the exterior of $\G$ (and its derivatives).
\end{prop}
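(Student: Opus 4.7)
The plan is to reinterpret $\lambda_n^{-1}$ as the squared operator norm of the natural embedding $L^2(\G,|\dd z|) \hookrightarrow L^2(\T, \dd\t)$, restricted to polynomials of degree at most $n$: by Parseval's identity $\tfrac{1}{2\pi}\int_{-\pi}^\pi |P(e^{i\t})|^2 \dd\t = \sum_k |c_k|^2$ for $P(z) = \sum_k c_k z^k$, so the given variational formula matches precisely this squared operator norm, and $\lambda_n \geq e^{-C_\G n}$ is equivalent to $\|P\|_{L^2(\T)}^2 \leq e^{C_\G n}\|P\|_{L^2(\G)}^2$ for polynomials of degree at most $n$. I would prove this by chaining two classical polynomial inequalities: a Nikolskii-type bound from $L^2(\G)$ to $L^\infty(\G)$, and a Bernstein--Walsh pointwise estimate that transports sup-norms from $\G$ to $\T$.

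For the Nikolskii step, writing $\T_j = \T(a_j, r_j)$ and expanding $P(a_j + w) = \sum_{k=0}^n b_{j,k} w^k$, Cauchy--Schwarz on this finite sum together with Parseval on the circle $|w|=r_j$ gives $\|P\|_{L^\infty(\T_j)}^2 \leq \tfrac{n+1}{2\pi r_j}\|P\|_{L^2(\T_j)}^2$, hence (using $r_j \geq \eta$)
\[
\|P\|_{L^\infty(\G)}^2 \leq \tfrac{n+1}{2\pi\eta}\|P\|_{L^2(\G)}^2.
\]
For the Bernstein--Walsh step, let $K$ be the union of the two disjoint closed disks bounded by $\T_1, \T_2$ (the polynomial convex hull of $\G$), and let $g = g_K(\cdot, \infty)$ be the Green function of $\C_\infty \setminus K$ with pole at infinity, extended by zero on $K$. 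Then $\log|P| - ng$ is subharmonic on $\C_\infty \setminus K$, bounded at infinity, and dominated on $\G$ by $\log\|P\|_{L^\infty(\G)}$, so the max principle yields $|P(z)| \leq \|P\|_{L^\infty(\G)} e^{ng(z)}$ throughout $\C \setminus K$; inside $K$ the same bound is trivial (componentwise max principle on each closed disk, matching $g=0$). Setting $M \defeq \sup_{z\in\T} g(z) < \infty$ and integrating,
\[
\int_\T |P|^2\,\dd\t \leq 2\pi\, e^{2Mn}\|P\|_{L^\infty(\G)}^2 \leq \tfrac{n+1}{\eta}\, e^{2Mn}\|P\|_{L^2(\G)}^2,
\]
so $\lambda_n^{-1} \leq e^{C_\G n}$ for any fixed $C_\G > 2M$ (absorbing the polynomial prefactor for large $n$). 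Continuity of $C_\G$ in $\G$ is inherited from the continuous dependence of $g_K$ and of $M = \sup_\T g$ on the centers and radii of the two disks.

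The main point requiring care is that the unit circle $\T$ may intersect the hull $K$ for some parameter ranges, so the Bernstein--Walsh estimate does not literally apply on all of $\T$. This is resolved uniformly by the zero-extension of $g$ on $K$: on $K \cap \T$ the componentwise max principle alone gives $|P| \leq \|P\|_{L^\infty(\G)}$ (matching $g=0$), while on $\T \setminus K$ the genuine Bernstein--Walsh bound governs, and both are captured by the single inequality $|P(z)| \leq \|P\|_{L^\infty(\G)} e^{ng(z)}$. No further case analysis is needed, and the resulting constants depend continuously on $\G$ through $g_K$, as asserted.
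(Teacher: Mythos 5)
Your proof is correct, and it takes a genuinely different (and more self-contained) route than the paper, which simply invokes \cite[Theorem 7.3 and Eq.~(10.5)]{Widom69}. The paper relies on Widom's refined machinery for extremal polynomials on systems of arcs, whereas you chain two classical elementary ingredients: (i) a Nikolskii-type $L^2(\G)\to L^\infty(\G)$ bound proved directly from Parseval and Cauchy--Schwarz on each circle, which costs only a polynomial factor $n+1$; and (ii) the Bernstein--Walsh inequality, which transports the sup norm from $\G$ to $\T$ at an exponential cost $e^{nM}$ with $M=\sup_{\T}g_K$. Your handling of the case $\T\cap K\ne\emptyset$ via the zero-extension of the Green function and the componentwise maximum principle is exactly the right fix and avoids any case split. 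Two small observations. First, your argument actually yields a slightly cleaner dependence than the paper asserts: your $C_\G$ is controlled by $\sup_{\T}g_K$ alone and does not involve derivatives of the Green function (which Widom's normal-families argument does), and continuity in the disk parameters is immediate. Second, the polynomial prefactor $(n+1)/(2\pi\eta)$ means the inequality $\lambda_n\ge e^{-C_\G n}$ as literally stated may fail at $n=0$ when $\eta$ is small (there $\lambda_0=\mathrm{Length}(\G)$, which need not be $\ge 1$); the clean statement your argument gives is $\lambda_n\ge c_\G\, e^{-C_\G n}$, or equivalently $\lambda_n\ge e^{-C_\G' n}$ for $n\ge 1$, which is what the paper actually uses (it applies the bound to $\lambda_{d+1}(\cA)$ for $d$ large). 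This is a harmless normalization issue, not a gap in the reasoning.
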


\begin{rmk}
Under the assumptions above the constant $C_{\G}$ can be bounded in terms of $\eta$ only.
\end{rmk}

\subsubsection{Elementary results}

We use the standard bound (that follows from the inequality $n! \ge (n/e)^n$)
\begin{equation}\label{eq:bound_binomial}
\binom{n}{k} \le \left( \frac{e n}{k} \right)^k, \qquad n > k \ge 1.    
\end{equation}

\begin{clm}\label{clm:simple_bound}
Let $a > 0$ and $C \ge e$, then we have
\[
\sup_{1\le x \le a/C} \left( \frac{a}{x} \right)^x \le C^{a/C}.
\]
\end{clm}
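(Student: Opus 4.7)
The plan is to reduce the bound to a standard one-variable optimization. Set $f(x) = (a/x)^x$, so that $\log f(x) = x \log a - x \log x$, and differentiate to get $(\log f)'(x) = \log(a/x) - 1 = \log\bigl(a/(ex)\bigr)$. This derivative vanishes at the unique critical point $x_0 = a/e$, is positive on $(0, x_0)$, and negative on $(x_0, \infty)$, so $f$ is strictly increasing on $(0, a/e]$ and strictly decreasing on $[a/e, \infty)$.

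The assumption $C \ge e$ gives $a/C \le a/e$, so the interval $[1, a/C]$ (when it is nonempty) lies entirely in the region on which $f$ is increasing. Consequently $\sup_{1 \le x \le a/C} f(x)$ is attained at the right endpoint $x = a/C$, where a direct computation yields $f(a/C) = \bigl(a/(a/C)\bigr)^{a/C} = C^{a/C}$. This is precisely the claimed inequality, with equality in fact. In the degenerate case $a/C < 1$ the interval is empty and the supremum is $-\infty$ by convention, so the inequality holds trivially.

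There is no serious obstacle in this argument; the claim is an elementary calculus exercise, recorded as a lemma presumably because it pairs conveniently with the bound $\binom{n}{k} \le (en/k)^k$ from the preceding display, and will be invoked later to maximize expressions of the shape $(a/k)^k$ over admissible ranges of $k$.
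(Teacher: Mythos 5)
Your proof is correct and follows essentially the same approach as the paper's one-line argument, which simply notes that $x\log(a/x)$ is increasing on $[1,a/C]$; you have just spelled out the derivative computation and the endpoint evaluation.
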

\begin{proof}
The function $x \log \tfrac{a}{x}$ is increasing on $[1,a/C]$.
\end{proof}

\begin{lem}\label{lem:poly_bound}
For every $A > 1$ there exists a constant $C_A > 1$ such that if $P$ is a polynomial of degree $d$ satisfying $|P(n)| \le A^n$ for all $0 \le n \le C_A d$, then $|P(n)| \le A^n$ for all $n \in \N$.
\end{lem}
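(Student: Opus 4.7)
The plan is to control the binomial-basis coefficients $c_k$ of $P$ using only the values of $P$ at $0,1,\dots,d$, and then show that this coefficient control forces $|P(n)|\le A^n$ for every $n$ beyond some linear-in-$d$ threshold.

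First I would write $P(x)=\sum_{k=0}^{d}c_k\binom{x}{k}$ and recall that $c_k$ equals the $k$-th forward difference $\Delta^k P(0)=\sum_{j=0}^{k}(-1)^{k-j}\binom{k}{j}P(j)$. The hypothesis $|P(j)|\le A^j$ for $0\le j\le d$ (available since $d\le C_A d$) immediately yields the uniform coefficient bound $|c_k|\le(1+A)^k$ for $0\le k\le d$. For any $n>C_A d$ this gives
\[
|P(n)|\le\sum_{k=0}^{d}(1+A)^k\binom{n}{k}.
\]
The ratio of consecutive summands is $(1+A)(n-k)/(k+1)$, which exceeds $1$ once $n\ge(A+2)d/(A+1)$; so for $n\ge 2d$ the summand is nondecreasing in $k$ and the sum is bounded by $(d+1)(1+A)^d\binom{n}{d}\le(d+1)(1+A)^d(en/d)^d$, using \eqref{eq:bound_binomial}.

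It remains to choose $C_A$ so that $(d+1)(1+A)^d(en/d)^d\le A^n$ for every $n\ge C_A d$ and $d\ge 1$. Setting $F(n)=n\log A-d\log\!\bigl((1+A)en/d\bigr)-\log(d+1)$, one has $F'(n)=\log A-d/n\ge 0$ once $n\ge d/\log A$, so $F$ is nondecreasing on $[C_A d,\infty)$ as soon as $C_A\ge 1/\log A$. It therefore suffices to check $F(C_A d)\ge 0$, which after dividing by $d$ and using $\log(d+1)/d\le\log 2$ reduces to $C_A\log A\ge\log\!\bigl(2e(1+A)C_A\bigr)$; the right-hand side grows only logarithmically in $C_A$, so this holds for $C_A$ large enough in terms of $A$. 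The degenerate case $d=0$ is immediate since then $P\equiv c_0$ with $|c_0|\le 1\le A^n$.

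The argument is elementary and presents no real obstacle; the only point requiring care is that the chosen $C_A$ must simultaneously exceed $1/\log A$ (so that $F$ is nondecreasing on $[C_A d,\infty)$) and satisfy the endpoint inequality above, both of which can be arranged by a single sufficiently large constant depending only on $A$.
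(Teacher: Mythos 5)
Your argument is correct, and it takes a genuinely different route from the paper's. You control $P$ through its binomial-basis coefficients $c_k=\Delta^k P(0)$, deducing the bound $|c_k|\le(1+A)^k$ from the hypothesis and then estimating $|P(n)|\le\sum_{k\le d}(1+A)^k\binom{n}{k}$ by a largest-term argument. The paper instead reconstructs $P$ directly from its values at $0,\dots,d$ via Lagrange interpolation, bounding the interpolation kernels $|L_j(n)|\le(6n/d)^{d+1}$ and summing $|P(j)|\le A^j$ over $j\le d$. Both are elementary and give the same linear-in-$d$ threshold; yours has the mild advantage of being phrased in the same binomial-coefficient language that the rest of the paper works with (so the coefficient bound $|c_k|\le(1+A)^k$ could be reused elsewhere), while the paper's is a few lines shorter because Lagrange interpolation packages the extrapolation step in one formula. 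One small point of care, which you do handle: you need $C_A$ to simultaneously exceed $2$ (so the summands are nondecreasing up to $k=d$ whenever $n\ge C_A d$), exceed $1/\log A$ (so $F$ is nondecreasing on the relevant range), and satisfy the endpoint inequality $C_A\log A\ge\log\bigl(2e(1+A)C_A\bigr)$; all three hold for a single large $C_A$ depending only on $A$, and you also correctly note that $C_A$ can be taken nonincreasing in $A$, matching the paper's remark.
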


\begin{proof}
The Lagrange interpolation formula states that
\[
P(x) = \sum_{j=0}^d P(j) L_j(x),\qquad \mbox{where} \quad L_j(x) = \prod_{\overset{0\le m\le d}{m\ne j}} \frac{x-m}{j-m}.
\]
Notice that for $n \ge d+1$
\[
|L_j(n)| \le n^{d+1} \prod_{\overset{0\le m\le d}{m\ne j}} (j-m)^{-1} \le \frac{n^{d+1}}{[(\lfloor d/2\rfloor)!]^2}
\le \left( \frac{6 n}{d} \right)^{d+1}.
\]
Let $C_A$ be a constant depending only on $A$ to be chosen later. Under the assumption of the lemma, we have that $|P(n)|\le A^n$ for all $0 \le n \le C_A d$. In addition, for $n \ge d+1$
\[
|P(n)| \le \left( \frac{6 n}{d} \right)^{d+1} \sum_{j=0}^d A^j \le \frac{1}{A-1} \left( \frac{6 A n}{d} \right)^{d+1}.
\]
If we choose $C_A$ large enough so that
\[
\mathrm{(i)}\,\, \exp(2/C_A) \le A \quad \mathrm{(ii)}\,\, 6 A C_A \le A^{C_A/2} \quad \mathrm{(iii)}\,\, (A-1)^{-1} \le A^{C_A/2},
\]
then we have that
\[
\frac{1}{A-1} \left( \frac{6 A n}{d} \right)^{d+1} \le A^n, \qquad \mbox{for all}\quad n \ge C_A d.
\]
In particular, we may assume $C_A$ is decreasing with $A$.
\end{proof}

A similar result holds for $P(-n)$ (e.g. define $Q(z) = B^{-1} P(-z -1)$ and apply the lemma). Hence, one can prove the following generalization. We leave the details to the reader.

\begin{lem}\label{lem:poly_bound_twosided}
For every $A, B > 1$ there exists a constant $C_{A,B} > 1$ such that if $P$ is a polynomial of degree $d$ satisfying $|P(n)| \le A^n$ and $|P(-n)| \le B^n$ for all $0 \le n \le C_{A,B} \cdot d$, then $|P(n)| \le A^n$ for all $n \in \N$ and $|P(-n)| \le B^n$ for all $n \in \N$.
\end{lem}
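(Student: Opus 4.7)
The plan is to reduce Lemma \ref{lem:poly_bound_twosided} to two independent applications of the one-sided Lemma \ref{lem:poly_bound}, one handling the positive axis and one handling the negative axis. Let $C_A, C_B > 1$ be the constants produced by Lemma \ref{lem:poly_bound} for the growth parameters $A$ and $B$, respectively, and set
\[
C_{A,B} \defeq \max(C_A, C_B).
\]

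First I would handle the positive side: the polynomial $P$ has degree $d$ and, by hypothesis, satisfies $|P(n)| \le A^n$ for all $0 \le n \le C_{A,B} d \ge C_A d$. Lemma \ref{lem:poly_bound} applied to $P$ immediately gives $|P(n)| \le A^n$ for all $n \in \N$.

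Next I would handle the negative side by a reflection trick. Define $Q(z) \defeq P(-z)$; this is again a polynomial of degree $d$. The hypothesis $|P(-n)| \le B^n$ for $0 \le n \le C_{A,B} d$ translates to $|Q(n)| \le B^n$ for $0 \le n \le C_{A,B} d \ge C_B d$. Applying Lemma \ref{lem:poly_bound} to $Q$ with growth parameter $B$ yields $|Q(n)| \le B^n$ for all $n \in \N$, which is precisely $|P(-n)| \le B^n$ for all $n \in \N$.

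There is no real obstacle here; the statement is essentially a bookkeeping combination of two independent one-sided bounds, and the choice $C_{A,B} = \max(C_A,C_B)$ is enough. (Alternatively, one could run the Lagrange interpolation argument from the proof of Lemma \ref{lem:poly_bound} directly on the nodes $\{-\lfloor d/2\rfloor, \dots, \lceil d/2\rceil\}$ to obtain two-sided bounds in a single pass, but the reflection approach is cleaner and avoids repeating the interpolation estimate.)
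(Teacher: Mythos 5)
Your proof is correct and takes essentially the approach the paper intends: reduce the negative axis to the one-sided Lemma~\ref{lem:poly_bound} via a reflection and take $C_{A,B}=\max(C_A,C_B)$. The paper's own hint uses $Q(z)=B^{-1}P(-z-1)$, a variant tailored to work even if the negative-side hypothesis were only given for $n\ge 1$ (as in the definition of $\cC_{A,B}^\infty$, where $n\in\N^+$); your simpler $Q(z)=P(-z)$ works equally well under the lemma's stated hypotheses, which include $n=0$.
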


The following lemma will allow us to approximate the value of the determinant of the moment matrix.
\begin{lem}\label{lem:bnd_entries_moment_mat}
Let $A > 1$, then for $L > C_A > e$ (where $C_A$ is the constant from Lemma~\ref{lem:poly_bound}), $n \in \N$ and $k,m\in \{0, \dots, n-1\}$ we have
\[
\sum_{j > L n} A^{-2j} \binom{j}{k} \binom{j}{m} \le \frac{A}{A-1} \exp((2 - L \log A) n).
\]
\end{lem}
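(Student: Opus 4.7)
The plan is to bound the summand uniformly in $k,m$ and then control the tail as a (nearly) geometric series. The mild miracle is that conditions (i) and (ii) defining $C_A$ in Lemma~\ref{lem:poly_bound} are calibrated exactly to make otherwise crude estimates work.

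First I would bound the binomial product uniformly. Since $L>e>2$, every $j>Ln$ satisfies $j>2n$, so $\binom{j}{\cdot}$ is increasing on $\{0,1,\dots,n\}$, and hence $\binom{j}{k}\binom{j}{m} \le \binom{j}{n}^2 \le (ej/n)^{2n}$ by \eqref{eq:bound_binomial}. It therefore suffices to estimate $\sum_{j>Ln}\phi(j)$ with $\phi(x)\defeq A^{-2x}(ex/n)^{2n}$. Next I would recognize this as a nearly geometric tail: the ratio $\phi(j+1)/\phi(j)=A^{-2}(1+1/j)^{2n}$ is at most $q\defeq A^{-2}e^{2/L}$ for every $j\ge Ln$, since $(1+1/j)^{2n}\le e^{2n/j}\le e^{2/L}$. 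Condition~(i) of Lemma~\ref{lem:poly_bound} reads $e^{2/C_A}\le A$, so $e^{2/L}\le A$ and $q\le A^{-1}<1$. Summing gives $\sum_{j>Ln}\phi(j)\le \phi(j_0)/(1-q)$ with $j_0=\lfloor Ln\rfloor+1$, and $(1-q)^{-1}=A^2/(A^2-e^{2/L})\le A^2/(A^2-A)=A/(A-1)$ by the same bound on $e^{2/L}$.

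It remains to bound $\phi(j_0)$. Differentiation of $\log\phi$ shows $\phi$ is decreasing on $[n/\log A,\infty)$, and $C_A\ge 2/\log A$ by condition~(i), so $\phi(j_0)\le\phi(Ln)=(eL)^{2n}A^{-2Ln}=e^{2n}A^{-Ln}\cdot(L^2/A^L)^n$. To close the argument I would show $L^2/A^L\le 1$ for all $L\ge C_A$: the function $L\mapsto L^2/A^L$ is decreasing on $[2/\log A,\infty)$, and squaring condition~(ii) of Lemma~\ref{lem:poly_bound}, $6AC_A\le A^{C_A/2}$, gives $C_A^2/A^{C_A}\le 1/(36A^2)\le 1$. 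Combining all the steps yields $\sum_{j>Ln}A^{-2j}\binom{j}{k}\binom{j}{m}\le (A/(A-1))\exp((2-L\log A)n)$, as required.

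The main obstacle is the very last step. The crude bound $\binom{j}{k}\binom{j}{m}\le (ej/n)^{2n}$ leaves the leading term $\phi(Ln)$ carrying an extraneous factor $(L^2/A^L)^n$, so at first glance the argument appears to lose an entire factor of $L^{2n}$. The saving grace, which I would emphasize in the write-up, is that condition~(ii) defining $C_A$ is engineered to force $L^2\le A^L$ already at $L=C_A$ (in fact with a factor of $36A^2$ to spare), which is precisely the inequality one needs to absorb this factor and recover the clean exponential bound $e^{2n}A^{-Ln}$.
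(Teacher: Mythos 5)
Your proof is correct and reaches the stated bound, but it is organized differently from the paper's. The paper bounds each binomial coefficient separately via \eqref{eq:bound_binomial} and Claim~\ref{clm:simple_bound} (with $a=j$, $C=L$), obtaining $A^{-2j}\binom{j}{k}\binom{j}{m} \le e^{2n}A^{-2j}L^{2j/L}$; the factor $L^{2j/L}$ is then absorbed pointwise into $A^{-2j}$ using $L^{2/L}<A$, reducing the tail to the pure geometric series $e^{2n}\sum_{j>Ln}A^{-j}$. You instead bound the whole product by $\binom{j}{n}^{2}\le (ej/n)^{2n}$ (via monotonicity of $\binom{j}{\cdot}$ on $\{0,\dots,n\}$ when $j>2n$) and run a ratio test on the resulting summand $\phi(j)$: condition~(i) of Lemma~\ref{lem:poly_bound} alone bounds the ratio by $A^{-1}$, while condition~(ii) is invoked only once, to kill the stray factor $(L^2/A^L)^n$ in the leading term $\phi(Ln)$. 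Both arguments ultimately hinge on $L^2\le A^L$, which condition~(ii) packages; the paper leaves this implicit (``if $L>e$ is large enough so that $L^{1/L}<\sqrt A$''), whereas your write-up spells out that it is automatic for all $L\ge C_A$ via the monotonicity of $L\mapsto L^2/A^L$. Your version also treats all $k,m\in\{0,\dots,n-1\}$ uniformly, avoiding the paper's ``the other cases being similar'' remark for $k$ or $m$ equal to $0$. The paper's route is slightly shorter once Claim~\ref{clm:simple_bound} is available; yours is more self-contained and more explicit about which condition on $C_A$ does which job.
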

\begin{proof}
We will prove the result for $k,m \ge 1$, the other cases being similar. For $j > Ln$, we have by \eqref{eq:bound_binomial} and Claim \ref{clm:simple_bound},
\[
A^{-2j} \binom{j}{k} \binom{j}{m}
\le e^{k + m} A^{-2j} \frac{j^{k+m}}{k^k m^m}
\le e^{2n} A^{-2j} L^{2j / L}.
\]
Therefore, if $L > e$ is large enough so that $L^{1/L} < \sqrt{A}$ we get
\[
\sum_{j > L n} A^{-2j} \binom{j}{k} \binom{j}{m} \le e^{2n} \sum_{j > L n} A^{-j},
\]
and the result follows.
\end{proof}

Similarly one can prove the following extension, we skip the details of the proof (one has to use $\left|\binom{-n}{k}\right| \le \binom{2n}{k}$ for $n\ge 0$ and $k \in \{0, \dots, n-1\}$).
\begin{lem}\label{lem:bnd_entries_moment_mat_twosided}
Let $A, B > 1$, then for $L > C_{A,B} > e$ (where $C_{A,B}$ is the constant from Lemma~\ref{lem:poly_bound_twosided}), $n \in \N$ and $k,m\in \{0, \dots, n-1\}$ we have
\[
\begin{aligned}
    \sum_{j < -L n} & B^{2j} \binom{j}{k} \binom{j}{m} + \sum_{j > L n} A^{-2j} \binom{j}{k} \binom{j}{m} \\
    & \le C^\prime_{A,B} \exp\left((4 - L \log \min\{A,B\}) n\right),
\end{aligned}
\]
where $C^\prime_{A,B}$, is some constant depending continuously on $A, B$.
\end{lem}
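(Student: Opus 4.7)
My plan is to mirror the argument of Lemma~\ref{lem:bnd_entries_moment_mat} and handle the new negative-index sum by a symmetric computation, so that the bulk of the work is a reduction to the positive-index case already treated. The second sum $\sum_{j>Ln}A^{-2j}\binom{j}{k}\binom{j}{m}$ is bounded directly by Lemma~\ref{lem:bnd_entries_moment_mat} and contributes at most $\frac{A}{A-1}\exp((2-L\log A)n)$, so I can focus entirely on the first sum.

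For the first sum I will substitute $j=-\tilde{j}$ with $\tilde{j}>Ln$ and invoke the identity $\binom{-\tilde{j}}{k}=(-1)^k\binom{\tilde{j}+k-1}{k}$, so that $\lvert\binom{-\tilde{j}}{k}\rvert=\binom{\tilde{j}+k-1}{k}\le\binom{2\tilde{j}}{k}$ (using $k\le n-1<\tilde{j}$, exactly the hint in the problem statement). This reduces the summand to
\[
B^{-2\tilde{j}}\binom{2\tilde{j}}{k}\binom{2\tilde{j}}{m},
\]
which has exactly the shape of the summand in Lemma~\ref{lem:bnd_entries_moment_mat}, but with $2\tilde{j}$ in place of $j$ and $B$ in place of $A$. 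From here I will follow that proof verbatim: bound $\binom{2\tilde{j}}{k}\le e^{k}(2\tilde{j}/k)^{k}$ (and analogously for $m$) via \eqref{eq:bound_binomial}, then apply Claim~\ref{clm:simple_bound} with $a=2\tilde{j}$ and $C=2L$; the hypothesis $k\le a/C$ reduces to $k\le\tilde{j}/L$, which holds since $k\le n-1$ and $\tilde{j}>Ln$. This yields $(2\tilde{j}/k)^{k}\le(2L)^{\tilde{j}/L}$, so the summand is at most $e^{2n}\bigl[(2L)^{1/L}/B\bigr]^{2\tilde{j}}$. For $L$ large enough, $(2L)^{1/L}<\sqrt{B}$, the series is geometric in $\tilde{j}$, and its sum over $\tilde{j}>Ln$ is at most $\frac{B}{B-1}\exp((2-L\log B)n)$.

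Adding the two contributions gives a total of order $\exp\bigl((2-L\log\min\{A,B\})n\bigr)$—in fact strictly stronger than the stated $(4-L\log\min\{A,B\})n$, so the slack in the statement is harmless—with prefactor $C'_{A,B}=\frac{A}{A-1}+\frac{B}{B-1}$, which depends continuously on $A,B$. I do not anticipate a substantive obstacle. The one careful point is the choice of the threshold $C_{A,B}$: it must be large enough so that $(2L)^{1/L}<\sqrt{\min\{A,B\}}$ holds for all $L>C_{A,B}$, and at the same time be compatible with the constant supplied by Lemma~\ref{lem:poly_bound_twosided}. Since $(2L)^{1/L}\to 1$ as $L\to\infty$, both requirements can be met simultaneously and $C_{A,B}$ can be taken to depend continuously on $A$ and $B$.
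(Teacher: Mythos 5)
Your proof is correct and fills in precisely the hint the paper gives: the paper skips the details, only suggesting the use of $\left|\binom{-n}{k}\right|\le\binom{2n}{k}$, and you do exactly that, mirroring the proof of Lemma~\ref{lem:bnd_entries_moment_mat} for the negative-index sum after the substitution $j=-\tilde j$. Your choice of $a=2\tilde j$, $C=2L$ in Claim~\ref{clm:simple_bound} even yields the exponent $2-L\log\min\{A,B\}$ in place of the stated $4-L\log\min\{A,B\}$ (a slightly stronger conclusion), at the cost of the marginally stronger requirement $(2L)^{1/L}<\sqrt{\min\{A,B\}}$ rather than $L^{1/L}<\sqrt{\min\{A,B\}}$; both versions are harmless since either condition is absorbed by enlarging $C_{A,B}$ continuously in $A,B$.
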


\subsubsection{Bounds for the determinant of the moment matrix}
Let $\cA$ be a $n \times n$ Hermitian matrix, and denote its eigenvalues by $\lambda_n(\cA) \le \dots \le \lambda_1(\cA)$. We will use the following result (see e.g. \cite[III.2]{Bhatia97}).
\begin{thm}[Weyl's inequalities] Let $\cA, \cB$ be $n \times n$ Hermitian matrices.
For all $k \in \{1, \dots, n\}$ we have
\[ \lambda_k(\cA) + \lambda_n(\cB) \le \lambda_{k}(\cA+\cB) \le \lambda_k(\cA) + \lambda_1(\cB). \]
\end{thm}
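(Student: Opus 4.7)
The plan is to prove both inequalities simultaneously from the Courant--Fischer min-max characterization of eigenvalues of Hermitian matrices. Recall that, ordering eigenvalues in decreasing order $\lambda_1 \ge \lambda_2 \ge \dots \ge \lambda_n$, one has
\[
\lambda_k(\cA) = \max_{\substack{S \subset \C^n \\ \dim S = k}} \min_{\substack{x \in S \\ \|x\|=1}} \ip{\cA x}{x}.
\]
I would take this as known (it is proved by a dimension-counting argument: any $k$-dimensional subspace meets the span of the last $n-k+1$ eigenvectors, giving the upper bound, while taking $S$ to be the span of the top $k$ eigenvectors realizes the value).

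From this the pointwise bounds $\lambda_n(\cB) \le \ip{\cB x}{x} \le \lambda_1(\cB)$ for any unit vector $x$ yield
\[
\ip{\cA x}{x} + \lambda_n(\cB) \le \ip{(\cA+\cB) x}{x} \le \ip{\cA x}{x} + \lambda_1(\cB).
\]
I would then substitute this into the Courant--Fischer formula for $\lambda_k(\cA+\cB)$: for the upper bound, the additive term $\lambda_1(\cB)$ does not depend on $x$, so it passes through both the min over $x \in S$ and the max over $k$-dimensional $S$, yielding $\lambda_k(\cA+\cB) \le \lambda_k(\cA) + \lambda_1(\cB)$. The lower bound is entirely analogous with $\lambda_n(\cB)$ in place of $\lambda_1(\cB)$.

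There is no serious obstacle here; the only step requiring a little care is verifying the min-max identity itself, and this is standard. Since the paper cites \cite[III.2]{Bhatia97} for the result, I would simply record the two-line derivation above and move on.
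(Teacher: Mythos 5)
Your proof via the Courant--Fischer min-max characterization is correct and is essentially the argument used in the cited reference \cite[III.2]{Bhatia97}; the paper itself records the statement without proof. Your key observation --- that the scalar bounds $\lambda_n(\cB)\le\ip{\cB x}{x}\le\lambda_1(\cB)$ pass unchanged through the inner $\min$ over unit vectors in $S$ and the outer $\max$ over $k$-dimensional subspaces $S$ --- is exactly the right simplification for this $j=1$ special case of Weyl's general inequality.
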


We will use the following notation for the entries of moment matrices:
\[
\begin{aligned}
\Xi_1(A,k,m; M) & = \sum_{j=0}^M A^{-2j} \binom{j}{k} \binom{j}{m}, \\
\Xi_2(A,B,k,m; M) & = \sum_{j=-M}^{-1} B^{2j} \binom{j}{k} \binom{j}{m} + \sum_{j=0}^M A^{-2j} \binom{j}{k} \binom{j}{m}.
\end{aligned}
\]

\begin{prop}\label{prop:det_lower_bnd}
Let $d \in \N$, $\veps \in (0,1/2)$, and $A \in \left(1 + \veps, 1/\veps \right)$. Consider the following (symmetric) matrices
\[
\begin{aligned}
\cM^{(1)}_d(A) & = \left[ \Xi_1(A,k,m,\infty) \right]_{k,m=0}^{d},\\
\cM^{(1)}_d(A;M) & = \left[ \Xi_1(A,k,m,M) \right]_{k,m=0}^{d}.
\end{aligned}
\]
If $L > e$ is sufficiently large (depending only on $\veps$), then
\[
\det \cM^{(1)}_d(A; \lfloor Ld \rfloor) \ge 2^{-(d+1)} \det \cM^{(1)}_d(A).
\]
\end{prop}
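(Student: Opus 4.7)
The plan is to decompose $\cM^{(1)}_d(A) = \cM^{(1)}_d(A;\lfloor Ld\rfloor) + E$, where
\[
E_{k,m} = \sum_{j > \lfloor Ld \rfloor} A^{-2j}\binom{j}{k}\binom{j}{m},
\]
and then compare the determinants of the two positive-definite symmetric matrices eigenvalue-by-eigenvalue via Weyl's inequality. The key point to establish is that $\|E\|_{op}$, which can be made exponentially small by taking $L$ large, is dominated by the least eigenvalue of $\cM^{(1)}_d(A)$, which is itself only exponentially small in $d$.

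First I would bound $\|E\|_{op}$ using Lemma~\ref{lem:bnd_entries_moment_mat}. Applying it with $n = d+1$ and the slightly smaller $L' := \lfloor Ld\rfloor/(d+1) = L - O(L/d)$, so that $L'n \le \lfloor Ld\rfloor$, gives (for $L > C_A$ large, depending only on $\veps$, and $d \ge d_0(\veps)$)
\[
|E_{k,m}| \le \tfrac{A}{A-1}\exp\bigl((2 - L'\log A)(d+1)\bigr),
\]
so that $\|E\|_{op} \le (d+1)\max|E_{k,m}| \le \exp\bigl(-(L\log A - 2)\,d + O_\veps(\log d)\bigr)$.

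Second, I would prove $\lambda_{d+1}(\cM^{(1)}_d(A)) \ge \exp(-\beta(\veps)\,d)$. Using the identification sketched in Section~3 (via Corollary~\ref{cor:innerProd_GF} and the change of variables $w = z/(1+z)$), $\cM^{(1)}_d(A)$ equals $\tfrac{1}{2\pi r}$ times the Gram matrix of $\{z^k\}_{k=0}^d$ in $L^2(\T_1,d|z|)$, where $\T_1 = \T(a,\rho)$ with $a = 1/(A^2-1)$, $\rho = A/(A^2-1)$, $r = 1/A$. Expanding $z^k = \sum_{j\le k}\binom{k}{j}a^{k-j}(z-a)^j$ factorizes this Gram matrix as $U^T D U$ with $U$ unit upper triangular ($U_{jk} = \binom{k}{j}a^{k-j}$) and $D$ diagonal ($D_{kk} = \rho^{2k+1}/r$). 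Then
\[
\lambda_{d+1}(\cM^{(1)}_d(A)) \ge \lambda_{d+1}(D)\,\sigma_{\min}(U)^2 = \lambda_{d+1}(D)/\|U^{-1}\|^2,
\]
and using $(U^{-1})_{jk} = \binom{k}{j}(-a)^{k-j}$ together with $\sum_{j\le k}\binom{k}{j}^2 \le 4^k$, one finds $\|U^{-1}\|^2 \le \|U^{-1}\|_{HS}^2 \le \exp(O_\veps(d))$, while $\lambda_{d+1}(D) \ge \exp(-O_\veps(d))$ because both $\rho$ and $r$ lie in compact subsets of $(0,\infty)$ for $A \in (1+\veps,1/\veps)$. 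This gives the desired exponential lower bound, which could alternatively be extracted from Proposition~\ref{prop:bound_least_eigenvalue} specialized to a single circle.

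Finally, I would choose $L = L(\veps)$ so large that $L\log A - 2 > \beta(\veps) + 1$ for every $A \in (1+\veps,1/\veps)$; then for all $d \ge d_0(\veps)$, $\|E\|_{op} \le \tfrac12\lambda_{d+1}(\cM^{(1)}_d(A))$. Applying Weyl's inequality to $\cM^{(1)}_d(A;\lfloor Ld\rfloor) = \cM^{(1)}_d(A) - E$ yields, for each $k \in \{1,\dots,d+1\}$,
\[
\lambda_k(\cM^{(1)}_d(A;\lfloor Ld\rfloor)) \ge \lambda_k(\cM^{(1)}_d(A)) - \|E\|_{op} \ge \tfrac12\lambda_k(\cM^{(1)}_d(A)),
\]
using $\lambda_k \ge \lambda_{d+1}$, and multiplying over $k$ gives the claim. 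The finitely many small-$d$ cases can be absorbed by further enlarging $L$. The main obstacle is the eigenvalue lower bound in the second step: a crude argument (say, $\lambda_{d+1} \ge \det/\lambda_1^d$) would only give $\exp(-O(d^2))$, which no $L$ independent of $d$ could overcome, so the exponential-in-$d$ (rather than super-exponential) lower bound on $\lambda_{d+1}$ is the quantitative heart of the matter.
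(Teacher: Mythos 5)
Your proof is correct and follows the same overall strategy as the paper: write $\cM^{(1)}_d(A) = \cM^{(1)}_d(A;\lfloor Ld\rfloor) + E$, control $\|E\|_{op}$ via Lemma~\ref{lem:bnd_entries_moment_mat} (up to a harmless factor $d+1$), show the least eigenvalue of $\cM^{(1)}_d(A)$ is only exponentially small in $d$, and conclude by Weyl's inequality applied eigenvalue by eigenvalue. You also correctly identify the last step as the crux: a crude $\exp(-O(d^2))$ bound would be useless.

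The one genuine divergence is in how you obtain the eigenvalue lower bound. The paper cites Proposition~\ref{prop:bound_least_eigenvalue}, a consequence of Widom's theory, to get $\lambda_{d+1}(\cM^{(1)}_d(A)) \ge \exp(-C_1(\veps)\,d)$. You instead give a self-contained argument: you identify $\cM^{(1)}_d(A)$ (up to the factor $\tfrac{1}{2\pi r}$) with the Gram matrix of $\{z^k\}_{k\le d}$ on $\T_1 = \T(a,\rho)$, factor it as $U^T D U$ with $U$ unit upper triangular built from shifted binomial coefficients and $D$ diagonal with $D_{kk} = \rho^{2k+1}/r$, and bound $\lambda_{\min}(\cM) \ge \lambda_{\min}(D)/\|U^{-1}\|_{op}^2$; the estimates $\sum_j\binom{k}{j}^2 \le 4^k$ and the boundedness of $a,\rho,r$ for $A\in(1+\veps,1/\veps)$ finish the job. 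This is more elementary and gives explicit constants; it is also cleaner here because Proposition~\ref{prop:bound_least_eigenvalue} is formulated for a union of two circles, whereas the present moment matrix lives on a single circle, so the paper is implicitly invoking a trivial specialization that you avoid. Conversely, the paper's route is the one that extends without change to the two-circle Proposition~\ref{prop:det_lower_bnd_twosided}, where your factorization trick does not directly apply. You correctly note Proposition~\ref{prop:bound_least_eigenvalue} as the alternative, so no gap remains.
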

\begin{rmk}
Notice that $\cM^{(1)}_d(A)$ and $\cM^{(1)}_d(A;M)$ are positive-definite matrices. By the Cauchy-Binet formula we always have that
\[
\det \cM_d^{(1)}(A; M) \le \det \cM_d^{(1)}(A).
\]
\end{rmk}
\begin{proof}
We let $\cA = \cM_d^{(1)}(A)$ and define the matrix
\[
 \cB = \left[ \sum_{j > \lfloor Ld \rfloor}^\infty A^{-2j} \binom{j}{k} \binom{j}{m} \right]_{k,m=0}^{d}.
\]
By Proposition \ref{prop:bound_least_eigenvalue} there is a constant $C_1 = C_1(\veps)$ such that $\lambda_{d+1}(\cA) \ge \exp(-C_1 d)$. Lemma~\ref{lem:bnd_entries_moment_mat} gives a pointwise bound for the entries of $\cB$, which in turn bounds its operator norm (up to factor $d+1$ which can be subsumed into the exponential). Thus we may choose $L = L(\veps)$ sufficiently large so that for all $d$ large enough $\lambda_1(\cB) \le \tfrac12 \lambda_{d+1}(\cA)$. 

Hence, Weyl's inequalities give
\begin{align*}
\det(\cM_d(A; \lfloor Ld \rfloor)) & = \det(\cA - \cB) = \prod_{j=1}^{d+1} \lambda_j(\cA - \cB) \\
& \ge \prod_{j=1}^{d+1} [ \lambda_j(\cA) - \lambda_1(\cB) ]
 \ge \prod_{j=1}^{d+1} \tfrac12 \lambda_j(\cA) = 2^{-(d+1)} \det(\cA).
\end{align*}
Note that the constant $\tfrac12$ is arbitrary and can be replaced by (e.g.) $1 - \exp(-d)$.    
\end{proof}

Using Lemma \ref{lem:bnd_entries_moment_mat_twosided}, one can prove the following generalization.
\begin{prop}\label{prop:det_lower_bnd_twosided}
Let $d \in \N$, $\veps \in (0,1/2)$, and $A, B \in \left(1 + \veps, 1/\veps \right)$. Consider the following (symmetric) matrices
\[
\begin{aligned}
\cM^{(2)}_d(A,B) & = \left[ \Xi_2(A,B,k,m,\infty) \right]_{k,m=0}^{d},\\
\cM^{(2)}_d(A,B;M) & = \left[ \Xi_2(A,B,k,m,M) \right]_{k,m=0}^{d}.
\end{aligned}
\]
If $L > e$ is sufficiently large (depending only on $\veps$), then
\[
\det \cM^{(2)}_d(A,B; \lfloor Ld \rfloor) \ge 2^{-(d+1)} \det \cM^{(2)}_d(A,B).
\]
\end{prop}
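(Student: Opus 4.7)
The plan is to mirror the argument of Proposition~\ref{prop:det_lower_bnd}, replacing the one-sided tail estimate by Lemma~\ref{lem:bnd_entries_moment_mat_twosided}. Set $\cA = \cM^{(2)}_d(A,B)$ and let $\cB$ collect the tail contributions, so that $\cM^{(2)}_d(A,B;\lfloor Ld \rfloor) = \cA - \cB$, where
\[
\cB = \left[ \sum_{j < -\lfloor Ld \rfloor} B^{2j} \binom{j}{k}\binom{j}{m} + \sum_{j > \lfloor Ld \rfloor} A^{-2j} \binom{j}{k}\binom{j}{m}\right]_{k,m=0}^d.
\]

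The first step is to lower-bound $\lambda_{d+1}(\cA)$. Splitting the defining sums according to the sign of $j$ and applying Corollary~\ref{cor:innerProd_GF} on the circles of radii $1/A < 1$ and $B > 1$, followed by the change of variables $w = z/(1-z)$ exactly as in the one-sided case, realizes $\cA$ as a moment matrix on $\G = \T_1 \cup \T_2$ with respect to the piecewise weighted arc-length measure $\mu$ satisfying $\mu|_{\T_1} = \tfrac{A}{2\pi}\, \dd|w|$ and $\mu|_{\T_2} = \tfrac{1}{2\pi B}\, \dd|w|$. The hypothesis $A, B \in (1+\veps, 1/\veps)$ controls the radii and the separation of $\D_1, \D_2$ in terms of $\veps$, and forces both weights into a fixed compact subinterval of $(0,\infty)$. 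Hence $\mu$ is comparable to arc-length on $\G$ up to constants depending only on $\veps$, and Proposition~\ref{prop:bound_least_eigenvalue} yields $\lambda_{d+1}(\cA) \ge \exp(-C_2 d)$ with $C_2 = C_2(\veps)$.

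The second step uses Lemma~\ref{lem:bnd_entries_moment_mat_twosided} to bound every entry of $\cB$ in absolute value by $C'_{A,B} \exp\bigl((4 - L \log \min\{A,B\}) d\bigr)$, which in turn gives the operator-norm bound
\[
\lambda_1(\cB) \le \|\cB\|_{\mathrm{op}} \le (d+1)\, C'_{A,B} \exp\bigl((4 - L \log \min\{A,B\}) d\bigr).
\]
Choosing $L = L(\veps)$ sufficiently large (the linear prefactor $d+1$ is absorbed by the exponential for large $d$), we guarantee $\lambda_1(\cB) \le \tfrac12 \lambda_{d+1}(\cA)$. Weyl's inequalities then give, for every $j \in \{1, \dots, d+1\}$,
\[
\lambda_j(\cA - \cB) \ge \lambda_j(\cA) - \lambda_1(\cB) \ge \tfrac12 \lambda_j(\cA),
\]
and taking the product of eigenvalues produces the stated inequality $\det(\cA - \cB) \ge 2^{-(d+1)} \det \cA$.

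The only non-routine point is the first step: Proposition~\ref{prop:bound_least_eigenvalue} is phrased for arc-length measure, but after the change of variables $\cA$ naturally corresponds to the measure $\mu$ carrying two different constant weights on $\T_1$ and $\T_2$. The resolution is that for $A, B$ in the prescribed range these weights are uniformly bounded and bounded below by constants depending only on $\veps$, so the associated quadratic forms are equivalent to the unweighted ones, and the exponential lower bound on the least eigenvalue transfers with an adjusted constant.
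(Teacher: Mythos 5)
Your proof is correct and follows exactly the route the paper intends: the authors omit the proof of this proposition, saying only that it follows from Lemma~\ref{lem:bnd_entries_moment_mat_twosided} by the same argument as Proposition~\ref{prop:det_lower_bnd}, and you have supplied precisely that argument (Weyl's inequalities with $\cA = \cM^{(2)}_d(A,B)$, the tail matrix $\cB$, and the least-eigenvalue bound from Proposition~\ref{prop:bound_least_eigenvalue}). Your explicit remark that $\cM^{(2)}_d(A,B)$ is a moment matrix on $\T_1 \cup \T_2$ for a measure whose two constant weights are bounded above and below in terms of $\veps$ alone, so that Proposition~\ref{prop:bound_least_eigenvalue} transfers, is exactly the reduction the paper performs implicitly (and repeats in the $\ell^\infty$ section when bounding the weight of $\mu_t$).
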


\subsection{Tools for the \texorpdfstring{$\ell^2$}{l2} case}
Below we will need an estimate for the number of lattice points inside ellipsoids of a special type.
\begin{prop}\label{prop:lattice_pts_in_ellipse}
Let $d \in \N$ and $\gamma, t > 0$ be parameters (that may depend on $d$), and define the ellipsoid
\[
\cE = \cE(d ; \gamma, t) = \left\{ \textbf{x} = (x_0, \dots, x_d) \in \R^{d+1} \colon \sum_{k=0}^d x_k^2 \gamma^{2k} \le t^2 \right\}.
\]
Moreover, let $\Psi$ be a lower unipotent matrix, that is, an $n\times n$ lower triangular matrix, with diagonal elements all equal to $1$. We denote by $\Psi(\cE)$ the image of $\cE$ under $\Psi$. Note that this map preserves volume.

For $\gamma \le 1 \le t$ we have
\[
-C_1 (d+1) \le \log \frac{\# \left( \Psi(\cE) \cap \Z^{d+1} \right)}{\vol{\cE}} \le  C_2 (d+1) \log \min\{ d+1, (1-\gamma)^{-1} \},
\]
where $C_1, C_2>0$ are absolute constants.
\end{prop}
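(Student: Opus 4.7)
The plan is to handle the two bounds separately, with the upper bound requiring substantially more work.

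For the \emph{lower bound}, I would apply van der Corput's strengthening of Minkowski's first theorem to $\Psi(\cE)$, which is centrally symmetric with $\vol{\Psi(\cE)}=\vol{\cE}$ because lower unipotent matrices have determinant one. This gives $\#(\Psi(\cE)\cap\Z^{d+1}) \ge 2\lfloor\vol{\cE}/2^{d+1}\rfloor+1 \ge 2^{-d}\vol{\cE}$ whenever $\vol{\cE}\ge 2^{d+1}$; when the volume is smaller, the origin alone gives $\#\ge 1\ge 2^{-d-1}\vol{\cE}$. Hence $\log(\#/\vol{\cE})\ge -(d+1)\log 2$, which closes the lower bound with $C_1=\log 2$.

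For the \emph{upper bound}, the crucial structural observation is that a lower unipotent $\Psi$ preserves the first coordinate: $(\Psi x)_0=x_0$. This lets me slice $\Psi(\cE)$ by hyperplanes $\{y_0=n\}$, $n\in\Z$, and show that each non-empty slice equals a translate $v_n+\Psi'(\cE'_n)$, where $\Psi'\in\R^{d\times d}$ is the lower unipotent principal submatrix of $\Psi$ obtained by deleting row and column $0$, and (after reindexing) $\cE'_n=\cE(d-1,\gamma,\sqrt{t^2-n^2}/\gamma)$. Writing $N^*(d,\gamma,t):=\sup_\Psi\#(\Psi(\cE(d,\gamma,t))\cap\Z^{d+1})$, bounding each slice by a Gauss-type inequality $\#((K+v)\cap\Z^d)\le\vol{K}+O(\text{surface})$, and using volume preservation $\vol{\Psi'(\cE'_n)}=\vol{\cE'_n}$, one obtains a $\Psi$-uniform recursion whose main term $\sum_{|n|\le t}\vol{\cE'_n}$ matches $\vol{\cE}$ up to the Riemann sum/integral discrepancy between $\sum_{|n|\le t}(t^2-n^2)^{d/2}$ and $\int_{-t}^t(t^2-x^2)^{d/2}\,\dd x = \omega_{d+1}t^{d+1}/\omega_d$.

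The \emph{main obstacle} is keeping the accumulated constants under control. A coarser approach based on the translate bound $\#((K+v)\cap\Z^d)\le\#((2K)\cap\Z^d)$ loses a factor of $2^d$ per recursive step and hence a prohibitive $2^{d(d+1)/2}$ overall, which is too weak for the claim. With the finer Gauss-type bound, the Riemann discrepancy is $O(1)$ at scales $t_k := t\gamma^{-k}\gtrsim\sqrt{d+1}$ and grows only polynomially in $d$ at smaller scales. Since the number of ``small'' scales is controlled by $\log(d+1)/\log(1/\gamma)$, which using $\log(1/\gamma)\ge 1-\gamma$ is $O(\min\{d+1,(1-\gamma)^{-1}\})$, careful book-keeping of the per-level losses (and of the boundary-discretization surface terms) ultimately produces the stated factor $\min\{d+1,(1-\gamma)^{-1}\}^{C_2(d+1)}$.
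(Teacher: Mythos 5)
Your lower bound coincides with the paper's: both apply van der Corput's theorem to the symmetric convex body $\Psi(\cE)$, which has the same volume as $\cE$ because $\det\Psi=1$, yielding $C_1=\log 2$.

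Your upper bound takes a genuinely different, recursive slicing route, and as written it has real gaps. The slicing structure itself is sound: you correctly observe that $\Psi$ preserves $x_0$, so $\Psi(\cE)\cap\{y_0=n\}$ is a translate of $\Psi'(\cE(d-1,\gamma,\sqrt{t^2-n^2}/\gamma))$ with $\Psi'$ again lower unipotent. But the completion fails in two places. First, the ``Gauss-type'' bound $\#((K+v)\cap\Z^d)\le\vol{K}+O(\text{surface})$ is not available with dimension-uniform constants, and for the thin slices your recursion inevitably produces (for $n$ near $\pm t$, and at early stages when $\sqrt{t^2-n^2}/\gamma$ is small) the correct universal estimate $\#((K+v)\cap\Z^d)\le\vol(K+[-\tfrac12,\tfrac12]^d)$ can exceed $\vol{K}$ by a factor far larger than any surface term; you cannot fold that loss into a Riemann-sum error. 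Second, your count of ``small'' scales, $\log(d+1)/\log(1/\gamma)$, is not $O(\min\{d+1,(1-\gamma)^{-1}\})$ as you assert: when $(1-\gamma)^{-1}\ll d$ it exceeds the claimed bound by a $\log(d+1)$ factor, so even granting per-level losses the book-keeping does not obviously yield the stated exponent. As it stands, this is a plan rather than a proof.

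For contrast, the paper's argument is a one-shot cube-packing that needs none of this. The key observation is that $\Psi^{-1}$ is also lower unipotent, so for two \emph{distinct} integer points $\mathbf{z}_1\ne\mathbf{z}_2$, looking at the first coordinate where they differ shows $\|\Psi^{-1}\mathbf{z}_1-\Psi^{-1}\mathbf{z}_2\|_\infty\ge 1$. The preimages in $\cE$ of the counted lattice points are therefore $1$-separated in $\ell^\infty$, the unit cubes centered at them have disjoint interiors, and a short estimate using $\sum_{k=0}^d\gamma^{2k}\le\min\{d+1,(1-\gamma^2)^{-1}\}$ together with $t\ge 1$ places all those cubes inside a dilate of $\cE$ by a factor $O(\min\{d+1,(1-\gamma)^{-1}\})$. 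Comparing volumes gives the upper bound directly, with no recursion and no boundary-term accounting. You may find it instructive to strengthen your observation ``$\Psi$ preserves the first coordinate'' to this separation statement about $\Psi^{-1}$; that is what makes the non-recursive argument go through.
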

\begin{rmk}
Notice that $\vol{\cE} = \gamma^{-d(d+1)/2} t^{d+1} \vol{B_2^{d+1}}$.
\end{rmk}
\begin{proof}
The lower bound (with $C_1 = \log 2$) follows from \cite[Sec. 7, Thm. 4]{GeomOfNumbers}.

Let $\mathbf{z}_1, \mathbf{z}_2 \in \Psi(\cE) \cap \Z^{d+1}$ be two different lattice points. Notice that by the structure of the matrix $\Psi$, we have that
\[
\| \mathbf{x}_1 - \mathbf{x}_2 \|_\infty \defeq \| \Psi^{-1}(\mathbf{z}_1) - \Psi^{-1}(\mathbf{z}_2) \|_\infty \ge 1.
\]

Now let $\mathbf{x} = (x_0, \dots, x_d) \in \cE$ be one of the preimages of a $\Z^{d+1}$ lattice point in $\Psi(\cE)$. If $\mathbf{y} = (y_0, \dots, y_d) \in \left[-\tfrac12, \tfrac12\right]^{d+1}$, then
\begin{align*}
\sum_{k=0}^d (x_k + y_k)^2 \gamma^{2k} & = \sum_{k=0}^d x_k^2 \gamma^{2k} + 2 \sum_{k=0}^d x_k y_k \gamma^{2k} + \sum_{k=0}^d y_k^2 \gamma^{2k}\\
& \le 2\left[ t^2 + \tfrac14 \min\{d+1, (1-\gamma^2)^{-1}\} \right],
\end{align*}
since $2|x_k y_k| \le x_k^2 + y_k^2$. Therefore, if we denote by $\cQ$ the union of all the cubes with side length $1$ centered at preimages of lattice points inside $\cE$, we see that
\[
\cQ \subset C_2 (1 + t^{-2} \min\{d+1, (1-\gamma)^{-1}\} \cE \subset C_2 \min\{d+1, (1-\gamma)^{-1}\} \cE.
\]
Since the cubes forming $\cQ$ are of volume $1$ and have disjoint interiors we get the required bound for the number of lattice points.
\end{proof}

The following theorem can be proved explicitly using conformal maps, but it also follows as a very special case of \cite[Thm. 9.1]{Widom69}. The fine properties of the normalizing constants can be found in \cite[Sec. 6]{Widom69}.
\begin{thm}\label{thm:Szego_OP}
Let $\eta \in (0,1)$. Suppose $\G = \T_1 \cup \T_2$ is the union of two circles with \emph{disjoint} interiors, whose radii are bounded between $\eta$ and $\frac{1}{\eta}$, and are separated by at least $\eta$ and at most $\frac{1}{\eta}$. Denote by $\left\{q_k\right\}_{k=0}^\infty$ the monic orthogonal polynomial of degree $k$ on $\G$ with respect to arc-length, and let
\[
\beta_k^2 = \int_\G |q_k|^2 \,\dd \sigma,
\]
be its normalization constant (squared). The asymptotics of $\beta_k$ are given by
\begin{equation}\label{eq:beta_k_asymp}
\beta_k = \logcap{\Gamma}^k f_k(\G) (1+o(1)), \qquad k\to\infty,
\end{equation}
where $f_k$ are functions that can be bounded between positive constants depending only on $\eta$. The implicit error term can also be bounded in terms of $\eta$ only.
\end{thm}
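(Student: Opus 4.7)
The plan is to derive Theorem~\ref{thm:Szego_OP} as a direct specialization of Widom's general theory of orthogonal polynomials on a finite union of disjoint smooth Jordan curves \cite[Thm.~9.1]{Widom69}, and then extract the uniform dependence on $\eta$. First I would verify the hypotheses: $\Gamma = \T_1 \cup \T_2$ is a disjoint union of two analytic Jordan curves, and arc-length $\dd\sigma$ is a smooth (Szeg\H{o}) weight with respect to itself. Let $\Omega$ be the unbounded component of $\C \setminus \Gamma$ and let $g(z,\infty)$ be its Green function with pole at infinity, so that $\logcap{\Gamma} = \exp(-\lim_{z\to\infty}[g(z,\infty)-\log|z|])$.

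Applying Widom's Theorem~9.1 then yields
\[
\beta_k = \logcap{\Gamma}^k \cdot F_k(\Gamma) \cdot (1+o(1)), \qquad k \to \infty,
\]
where $F_k(\Gamma) > 0$ is given explicitly in terms of Riemann theta functions associated with the genus-one hyperelliptic Riemann surface of the two-component complement (or equivalently, in terms of the harmonic measure of $\T_1$ at infinity). The dependence of $F_k$ on $k$ is almost periodic, reflecting precisely the well-known failure of classical Szeg\H{o}-type limits on multiply-connected domains. Setting $f_k := F_k(\Gamma)$ matches the form in the statement. As a sanity check, the capacity formula recorded in the remark after Theorem~\ref{thm:subcrit_volume_latticept} can also be derived this way, via uniformization of $\Omega$ by an annulus.

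The remaining work is to upgrade everything to uniform bounds in $\eta$. The admissible curves $\Gamma$ form a compact family (parametrized by two centers and two radii satisfying the $\eta$-constraints). The capacity, equilibrium measure, Green function and its derivatives, harmonic measures, and theta-function parameters all depend continuously on this four-real-parameter family. Consequently $F_k(\Gamma)$ varies continuously with $\Gamma$; being almost periodic and non-vanishing in $k$, it takes values in a compact subset of $(0,\infty)$ whose bounds depend only on $\eta$. The same continuity argument applied to Widom's proof shows the $o(1)$ error is uniform across the compact $\eta$-family. The principal obstacle is exactly this extraction of uniformity: one must revisit the proof of \cite[Thm.~9.1]{Widom69} and check that each constant appearing there depends only on quantities (smoothness, Green function derivatives, separation of components) that are themselves controlled by $\eta$ alone. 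The explicit conformal uniformization of $\Omega$ by an annulus — alluded to before the theorem statement — provides an alternative route in which these uniform estimates become transparent but at the cost of considerably more computation.
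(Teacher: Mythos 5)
Your approach is essentially the same as the paper's: the paper also derives this result by citing Widom's Theorem~9.1 for the $\logcap{\Gamma}^k$ asymptotics (and Widom's Section~6 for the fine, $\eta$-uniform behavior of the normalizing constants), offering no independent proof. Your added compactness/continuity argument in the $\eta$-parametrized family of circle pairs is a reasonable way of making explicit the uniformity that the paper leaves to the reader of Widom.
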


We will also require corresponding results for Bergman polynomials (i.e. orthogonal with respect to area measure). The following result closely follows \cite[Theorem 4.1]{BergmanPoly}. We give the details in Appendix~\ref{sec:Bergman_poly_two_islands}.
\begin{thm}\label{thm:BegmanAsymp}
Let $\eta \in (0,1)$. Suppose $G = \D_1 \cup \D_2$ is the \emph{disjoint} union of two disks, whose radii are bounded between $\eta$ and $\frac{1}{\eta}$, and are separated by at least $\eta$ and at most $\frac{1}{\eta}$.

Let $\{ t_k \}_{k=0}^\infty$ denote the sequence of monic Bergman orthogonal polynomials associated with $G$, that is
\[
\int_G t_{k_1} (z) t_{k_2}(z) \, \dd A(z) = \gamma_{k_1}^2  \cdot
\begin{cases}
1,    & k_1 = k_2, \\
0,    & k_1 \ne k_2,
\end{cases}
\]
where $\dd A$ denotes the Lebesgue measure on $\C$. There are constants $0 < C_1(\eta) < C_2(\eta)$ such that for $k\in \N$
\[
C_1 \frac{\logcap{\G}^{k}}{\sqrt{k+1}} \le \gamma_k \le C_2 \frac{\logcap{\G}^{k}}{\sqrt{k+1}}.
\]
\end{thm}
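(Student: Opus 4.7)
My plan is to treat the lower and upper bounds separately; the lower bound reduces to a direct polynomial identity, while the upper bound is the main difficulty and requires a conformal construction.

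\textbf{Lower bound.} The key ingredient is a polynomial inequality that compares the area norm on a disk to the arc-length norm on its boundary for polynomials of bounded degree. For any polynomial $P$ of degree at most $k$, expanding around the center of each component disk $\D_j = \D(a_j, r_j)$ as $P(z) = \sum_{n=0}^k c_n^{(j)}(z - a_j)^n$, the orthogonality of the monomials in both inner products yields
\[
\int_{\D_j} |P|^2 \, \dd A = \pi \sum_{n=0}^k |c_n^{(j)}|^2 \frac{r_j^{2n+2}}{n+1}, \qquad \int_{\partial \D_j} |P|^2 \, \dd|z| = 2\pi \sum_{n=0}^k |c_n^{(j)}|^2 r_j^{2n+1}.
\]
Replacing $n+1$ by $k+1$ in the area expression and summing over the two components (using $r_j \ge \eta$) gives the uniform polynomial inequality
\[
\int_G |P|^2 \, \dd A \ge \frac{\eta}{2(k+1)} \int_\G |P|^2 \, \dd|z|, \qquad \deg P \le k,
\]
where $\G = \partial G$. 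Applying this to $P = t_k$ and combining with the Szegő extremal property $\int_\G |t_k|^2 \, \dd|z| \ge \beta_k^2$ together with Theorem~\ref{thm:Szego_OP} (which gives $\beta_k \ge c(\eta) \logcap{\G}^k$) produces the lower bound $\gamma_k \ge C_1(\eta) \logcap{\G}^k / \sqrt{k+1}$, after using that $\logcap{G} = \logcap{\G}$ since $\G$ is the outer boundary of $G$.

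\textbf{Upper bound.} Here I must exhibit a monic polynomial $F_k$ of degree $k$ with $\int_G |F_k|^2 \, \dd A \le C(\eta)^2 \logcap{\G}^{2k}/(k+1)$. The obvious candidate $F_k = q_k$ only yields $\int_G |q_k|^2 \, \dd A \le C \logcap{\G}^{2k}$ (by the reverse of the same term-by-term comparison), losing the crucial factor of $1/(k+1)$: this factor appears precisely when the Taylor coefficients $c_n^{(j)}$ of $F_k$ around each $a_j$ concentrate at the leading index $n = k$ and decay geometrically for smaller $n$. Following \cite[Theorem 4.1]{BergmanPoly}, I will construct $F_k$ from the conformal geometry of the doubly-connected exterior $\Omega = \hat{\C} \setminus \overline{G}$. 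Using the Green function $\greenf$ of $\Omega$ with pole at $\infty$ and its (multi-valued) harmonic conjugate, one defines a Faber-Walsh type meromorphic function $\Phi$ on $\Omega$ satisfying $|\Phi| \equiv 1$ on $\G$ and $\Phi(z) \sim z/\logcap{\G}$ as $z\to\infty$. Extracting the polynomial (non-negative power) part of the Laurent expansion of $\logcap{\G}^k \Phi^k$ at infinity produces a monic polynomial $F_k$ of degree $k$ whose modulus on $\G$ is $O(\logcap{\G}^k)$; because $|\Phi|$ decays exponentially as one moves from $\G$ into each component disk, the Taylor coefficients of $F_k$ around each $a_j$ inherit geometric decay away from the leading index $n=k$. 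Plugging this decay into the expression for $\int_{\D_j} |F_k|^2 \, \dd A$ and summing the resulting geometric series in $n$ then produces the required bound.

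\textbf{Main obstacle.} The most delicate step is the $\eta$-uniform quantitative control of the Faber-Walsh construction, namely showing that the Taylor coefficients of $F_k$ around each $a_j$ decay geometrically (with a rate depending only on $\eta$) as $n$ descends from $k$. This step plays the role for Bergman polynomials that Theorem~\ref{thm:Szego_OP} plays for Szegő polynomials, and ultimately depends on uniform bounds for the modulus of the doubly-connected domain $\Omega$ and for the conjugate periods of its Green function, both of which remain bounded and bounded away from zero uniformly in the range of geometries allowed by the parameter $\eta$.
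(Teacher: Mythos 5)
Your lower bound is essentially the paper's argument: expand around each center, use orthogonality of monomials on circles and disks, lose a factor $1/(k+1)$ by replacing $n+1$ with $k+1$, compare to the Szeg\H{o} norm $\beta_k$ and invoke Theorem~\ref{thm:Szego_OP}. This part is fine, including the observation that $\logcap{G}=\logcap{\G}$.

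Your upper bound has the right diagnosis but a genuine gap in the mechanism. You correctly identify that one needs a monic competitor whose coefficients concentrate near the leading index, and that the Szeg\H{o} polynomial alone does not produce the $1/(k+1)$. But the route you propose --- take the ``polynomial part of the Laurent expansion of $\logcap{\G}^{k}\Phi^{k}$'' --- is not available here: $\Omega=\hat{\C}\setminus\overline{G}$ is doubly connected, so $\Phi$ (the exponential of the complex Green function) is only defined up to a multiplicative unimodular multiplier that jumps across cycles, and $\Phi^{k}$ is genuinely multi-valued. There is no canonical Laurent expansion at $\infty$ to truncate, and the construction of a near-extremal monic polynomial in the multiply connected setting is precisely what Widom's theory supplies (this is what the paper cites as Theorem~\ref{lem:Widom_poly}, and is nontrivial --- it involves the Schottky double and theta functions, not Faber polynomials). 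Beyond that, even granting existence of such a polynomial $F_k$ with $\max_{\G}|F_k|\lesssim\logcap{\G}^{k}$, the ``geometric decay of Taylor coefficients around each $a_j$'' is the entire content of the estimate and is left unproved; you flag it as the ``main obstacle'' but do not resolve it. A pointwise bound on $\G$ together with the maximum principle gives at best $|F_k|\lesssim\logcap{\G}^{k}$ uniformly on $G$, which loses a $\log k$ rather than gaining a $1/(k+1)$.

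The paper circumvents both difficulties. It applies Widom's theorem not on $\G$ itself but on an inner level set $\partial\Omega_\rho$ of the (harmonically extended) Green function, so that the resulting monic polynomial $Q$ satisfies $|Q|\lesssim(\rho\logcap{\G})^{k}$ there. The identity $\greenf_R+\log\logcap{\partial\Omega_R}=\log\logcap{\G}$ then propagates the sup bound $|Q|\lesssim R^{k}\logcap{\G}^{k}$ to every level set $\partial\Omega_R$, $\rho\le R\le 1$. Finally, the coarea formula expresses $\int_{G\cap\Omega_\rho}|Q|^2\,\dd A$ as an integral over $R\in[\rho,1]$ of the level-set integrals, and the elementary bound
\[
\int_{\rho}^{1}R^{2k}\,\dd R\le\frac{1}{2k+1}
\]
produces the $1/(k+1)$ factor directly, with no need to track Taylor coefficients; the deep part $\Omega_\rho^c$ contributes only $O(\rho^{2k}\logcap{\G}^{2k})$, which is negligible. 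This is the mechanism your outline is missing, and it is what makes the $\eta$-uniformity tractable: the only inputs are Widom's existence theorem (applied once, on a shrunk curve) and uniform bounds on $|\nabla\greenf|$ and the lengths of level sets.
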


\section{\texorpdfstring{$\ell^2$}{l2} volume and number of lattice points}
In this section we prove Theorem \ref{thm:IVP_thres_elltwo}, part of Theorem \ref{thm:subcrit_volume_latticept} and Theorem \ref{thm:ltwo_twosided}.

We recall some notation. Put $\textbf{c} = \textbf{c}(d) = (c_0, \dots, c_d) \in \R^{d+1}$ and let
\begin{align*}
P(x) & = P_\textbf{c}(x) = \sum_{k=0}^d c_k \binom{x}{k},\\
g_\textbf{c}(z) & = g_P(z) = \frac{1}{1-z} \sum_{n=0}^d c_k \left( \frac{z}{1-z} \right)^k, \quad z \in \C \setminus \{1\}.    
\end{align*}

Let $A,B > 1$ be fixed constants, and define the sequences
\[
a_n(\textbf{c}) = P_\textbf{c}(n) A^{-n}, \: n\in\N, \quad b_n(\textbf{c}) = P_\textbf{c}(-n) B^{-n}, \: n\in\N^+,
\]
and the sets
\[
\cC_{d,A,B}^2(t) \defeq \left\{ \textbf{c} \in \R^{d+1} \colon \| a_n(\textbf{c}) \|_{\ell^2(\N)} \le t, \, \| b_n(\textbf{c}) \|_{\ell^2(\N^+)} \le t \right\},
\]
and
\[
\cD_{d,A,B}^2(t) \defeq \left\{ \textbf{c} \in \R^{d+1} \colon \left\| \frac{a_n(\textbf{c})}{\sqrt{n+1}} \right\|_{\ell^2(\N)} \le t, \, \left\| \frac{b_n(\textbf{c})}{\sqrt{n+1}} \right\|_{\ell^2(\N^+)} \le t \right\}.
\]

\subsection{\texorpdfstring{$\ell^2$}{l2} volume estimates}\label{subsec:volume_estimates}
We now estimate the volume of the sets $\cC^2$ and $\cD^2$.
By Corollary~\ref{cor:integral_GF} we have
\[
\cC_{d,A,B}^2(t) =
\left\{ \textbf{c} \in \R^{d+1} \colon 
\frac{A}{2\pi} \int_{A^{-1} \T} |g_\textbf{c}(z)|^2 \dd |z| \le t^2, \, \right.
 \left.  \frac{1}{2\pi B} \int_{B \T} |g_\textbf{c}(z)|^2 \dd |z| \le t^2 \: \right\},
\]
and
\[
\cD_{d,A,B}^2(t) = \left\{ \textbf{c} \in \R^{d+1} \colon
\frac{A^2}{\pi} \int_{A^{-1} \D} |g_\textbf{c}(z)|^2 \dd A(z) \le t^2,\, \right.
\left. \frac{B^2}{\pi} \int_{(B \D)^c}
|g_\textbf{c}(z)|^2 \, \frac{\dd A(z)}{|z|^4} \le t^2 \: \right\}.
\]

Notice that under a (one to one) change of variables $z = \phi(w)$, the arc-length and area integrals transform as follows,
\[
\int_{\phi(\G)} f(z) \, \dd |z| = \int_\G f(\phi(w)) |\phi^\prime(w)| \, \dd |w|, \qquad \int_{\phi(G)} f(z) \, \dd A(z) = \int_G f(\phi(w)) |\phi^\prime(w)|^2 \, \dd A(w).
\]

We now make the change of variables via the fractional linear transformation,
\[
w = \psi(z) = \frac{z}{1-z}, \quad \phi = \psi^{-1}, \quad
z = \phi(w) = \frac{w}{1+w}, \quad \phi^\prime(w) = \frac{1}{(1+w)^2},
\]
such that
\[
h_{\textbf{c}}(w) \defeq (1+w)^{-1} (g_{\textbf{c}} \circ \phi)(w) = \sum_{k=0}^d c_k w^k,
\]
and in particular $h_{\textbf{c}}$ is a polynomial. Moreover,
\[
\psi(A^{-1} \T) = \T\left(\frac1{A^2-1},\frac{A}{A^2-1}\right) \eqdef \T_1, \quad
\psi(B \T) = \T\left(\frac{B^2}{1-B^2},\frac{B}{B^2-1}\right) \eqdef \T_2.    
\]
Thus,
\[
\cC_{d,A,B}^2(t) =
\left\{ \textbf{c} \in \R^{d+1} \colon
\frac{A}{2\pi} \int_{\T_1} |h_\textbf{c}(w)|^2 \dd |w| \le t^2, \right.
\left. \frac{1}{2\pi B} \int_{\T_2} |h_\textbf{c}(w)|^2 \dd |w| \le t^2
\right\},    
\]
and
\[
\cD_{d,A,B}^2(t) = \left\{ \textbf{c} \in \R^{d+1} \colon
\frac{A^2}{\pi} \int_{\D_1} |h_\textbf{c}(w)|^2 \frac{\dd A(w)}{|1+w|^{2}} \le t^2, \right.
\left. \frac{B^2}{\pi} \int_{\D_2} |h_\textbf{c}(w)|^2 \frac{\dd A(w)}{|w|^{4} |1+w|^{-2}} \le t^2
\right\},
\]
where
\[
\D_1 \defeq \D\left(\frac1{A^2-1},\frac{A}{A^2-1}\right), \quad
\D_2 \defeq \D\left(\frac{B^2}{1-B^2},\frac{B}{B^2-1}\right).
\]

\subsubsection{Volume of $\cC^2$}

Let $\{ q_k(w) \}_{k=0}^\infty$ be the sequence of monic orthogonal polynomials with respect to arc-length measure on the curve $\Gamma~\defeq~\T_1~\cup~\T_2$ in increasing degree. Then, we can write
\[
h_\mathbf{c}(w) = \sum_{k=0}^d \alpha_k(\mathbf{c}) q_k(w), \quad \text{where} \quad  \alpha_d = c_d, \qquad \beta_k^2 \defeq \int_\Gamma |q_k(w)|^2 \dd |w|.
\]

Furthermore,
\begin{equation}\label{eq:ellipse_comparison}
\cE(\beta_0,\dots,\beta_d; t_1) \subset \cC_{d,A,B}^2(t) \subset \cE(\beta_0,\dots,\beta_d; t_2),
\end{equation}
where $\cE(\beta_0,\dots,\beta_d;t)$ is the ellipse
\[
\cE(\beta_0,\dots,\beta_d;t) = \left\{ \textbf{c} \in \R^{d+1} \colon \sum_{k=0}^d |\alpha_k(\textbf{c})|^2 \beta_k^2 \le t^2 \right\},
\]
and
\[
t_1 \defeq \sqrt{2 \pi \min\{A^{-1}, B\}} \, t, \qquad t_2 \defeq \sqrt{2 \pi (A^{-1} +B)} \, t.
\]

Let $\eta \in (0,1)$, and suppose $A,B > 1$ satisfy the conditions of Theorem \ref{thm:Szego_OP}. In particular, $A$ and $B$ are bounded away from $1$ and $\infty$ in terms of $\eta$. Hence, the constants $\{ \beta_k \}_{k\in\N}$ satisfy
\[
\beta_k = \logcap{\Gamma}^k g_k(A,B,d),
\]
where $g_k$ are functions that are bounded between positive constants depending only on $\eta$. Therefore, by \eqref{eq:ellipse_comparison},
\[
\left[ C_1(\eta) t_1 \right]^{d+1} \le \frac{\vol{\cC_{d,A,B}^2(t)}}{\vol{B^{d+1}_2} \logcap{\Gamma}^{-d(d+1)/2}} \le \left[ C_2(\eta) t_2 \right]^{d+1}.
\]
This proves the $\ell^2$ volume asymptotics in the first part of Theorem \ref{thm:subcrit_volume_latticept}. The second part of the theorem follows by an application of Proposition \ref{prop:lattice_pts_in_ellipse}, with the ellipsoid\[
\Psi^{-1}\left(\cE(\beta_0,\dots,\beta_d; t_\ell)\right),
\]
where $\Psi$ is the change of basis given by the $\alpha_k$s.

Now, notice the leading coefficient of $h_\mathbf{c}$ is a non-zero integer. Hence, it follows that there is a constant $C_\eta > 0$ such that
\[
\int_{\T_1} |h_\textbf{c}(w)|^2 \dd |w| \ge C_\eta \quad \text{or} \quad
\int_{\T_2} |h_\textbf{c}(w)|^2 \dd |w| \ge C_\eta.
\]
If we choose $t$ sufficiently small, depending on $\eta$, then we conclude that there are no integer lattice points inside $\cC_{d,A,B}^2(t)$. This proves the first part of Theorem \ref{thm:ltwo_twosided}.

\subsubsection{Volume of $\cD^2$}
For Bergman orthogonal polynomials notice that first we can bound the weights
$|1+w|^{-2}$ on $\D_1$, and $|w|^{-4}|1+w|^2$ on $\D_2$ from above and from below in terms of $\eta$. Thus, there are constants $c^{(j)} = c^{(j)}(\eta), \,j\in\{1,2\},$ such that
\[
\begin{aligned}
\cD_{d,A,B}^2(t) & \subset \left\{ \textbf{c} \in \R^{d+1} \colon
c^{(1)} \int_{\D_1} |h_\textbf{c}(w)|^2 \dd A(w) \le t^2, \quad c^{(2)} \int_{\D_2} |h_\textbf{c}(w)|^2 \dd A(w) \le t^2
\right\}\\
& \subset \left\{ \textbf{c} \in \R^{d+1} \colon
\tfrac12 \min\{c^{(1)},c^{(2)}\} \left( \int_{\D_1} + \int_{\D_2} \right) |h_\textbf{c}(w)|^2 \dd A(w) \le t^2
\right\},
\end{aligned}
\]
and similarly in the other direction with a different constant.

Now, let $t_k$ be the Bergman monic orthogonal polynomials with respect to area measure on $\D_1 \cup \D_2$, and let $\gamma_k$ be their normalizing constants. By Theorem \ref{thm:BegmanAsymp} we have
\[
C_1(\eta) \frac{\logcap{\Gamma}^{k+1}}{\sqrt{k+1}} \le \gamma_k \le C_2(\eta) \frac{\logcap{\Gamma}^{k+1}}{\sqrt{k+1}},
\]
which, together with the previous discussion, gives
\[
\left( \widetilde{C_1}(\eta) t \right)^{d+1} \le \frac{\vol{\cD_{d,A,B}^2(t)}}{\logcap{\Gamma}^{-d(d+1)/2}} \le \left( \widetilde{C_2}(\eta) t \right)^{d+1}.
\]
Here we used the asymptotics of the volume of the Euclidean ball
\[
\vol{B^{d+1}_2} = \left(\frac{2\pi e (1+o(1))}{d+1}\right)^{(d+1)/2}, \quad \text{as} \quad d \to \infty.
\]
Hence, choosing $t$  sufficiently large, the volume of $\cD^2$ is exponentially large in $d$ (as long as $\logcap{\Gamma} \le 1$). Applying \cite[Sec. 7.2, Thm. 1]{GeomOfNumbers} proves the second part of Theorem \ref{thm:ltwo_twosided}.

\subsection{Refined estimate for the critical growth constraint on the natural numbers}
By essentially the same argument as in Section \ref{subsec:volume_estimates} we find
\begin{align*}
\cC_{d,A}^2(t) & = \left\{ \textbf{c} \in \R^{d+1} \colon
\sum_{n\in\N} |P_{\textbf{c}}(n) A^{-n}|^2 \le t^2 \right\}
 = \left\{ \textbf{c} \in \R^{d+1} \colon
\frac{A}{2\pi} \int_{\T_1} |h_\textbf{c}(w)|^2 \dd |w| \le t^2 \right\} \\ 
& = \left\{ \textbf{c} \in \R^{d+1} \colon
\frac{A}{2\pi} \sum_{k=0}^d |\alpha_k(\textbf{c})|^2 \beta_k^2 \le t^2 \right\},
\end{align*}
where as before
\[
\T_1 = \T\left(\frac1{A^2-1},\frac{A}{A^2-1}\right), \qquad h_{\textbf{c}}(w) = \sum_{k=0}^d c_k w^k.
\]
Here the monic orthogonal polynomials with respect to arc-length on $\T_1$, and their normalization constants are given explicitly by
\[
p_k(w) = (w - (A^2-1)^{-1})^k,\qquad \beta_k^2 = 2 \pi \left( \frac{A}{A^2-1} \right)^{2k+1}.
\]

Thus, taking the prefactor $A/(2\pi)$ into account, we have
\[
\vol{\cC_{d,A}^2(t)} =
\left( \frac{A^2-1}{A} \right)^{(d+1)^2/2} \vol{B^{d+1}_2} \left( \frac{t}{\sqrt{A}}\right)^{d+1}
 = \frac{(A^2-1)^{(d+1)^2/2}}{A^{(d+1)(d+2)/2}} \vol{B^{d+1}_2} t^{d+1}.
\]

Evidently if $c_k$ is the largest non-zero coefficient of $h_{\textbf{c}}$, then $\int_{\T_1} |h_\textbf{c}(w)|^2 \dd |w| \ge c_k^2 \beta_k^2$. Hence, when $A = \vphi$ and $t < \sqrt{\vphi}$ there are no integer-valued polynomials $P$, satisfying
\[
\sum_{n\in\N} |P_{\textbf{c}}(n) \varphi^{-n}|^2 \le t^2.
\]
This proves the first part of Theorem \ref{thm:IVP_thres_elltwo}. We now turn to the proof of the second part.

We will use the results from \cite[Chap. III]{Sue74}. We denote by  $\mathrm{S}.x$ and $\mathrm{S}.x.y$ theorem and equation numbers from this paper.

Denote
\begin{align*}
\cD_{d,\varphi}^2(t) & = \left\{ \textbf{c} \in \R^{d+1} \colon
\sum_{n\in\N} \frac{1}{n+1} |P_{\textbf{c}}(n) \varphi^{-n}|^2 \le t^2 \right\} \\
& = \left\{ \textbf{c} \in \R^{d+1} \colon
\frac{\varphi^2}{\pi} \int_{\D_1} |h_\textbf{c}(w)|^2 \frac{\dd A(w)}{|1+w|^2} \le t^2 \right\}\\
& = \left\{ \textbf{c} \in \R^{d+1} \colon
\frac{\varphi^2}{\pi} \sum_{k=0}^d |\alpha_k(\textbf{c})|^2 \gamma_k^2 \le t^2 \right\},
\end{align*}
where now $\gamma_k$ are the normalizing constants of monic orthogonal polynomials with respect to the weight $\frac{\dd A(w)}{|1+w|^2}$ on $\D_1 = \D(\varphi^{-1} , 1)$.

For fixed $\veps \in (0,1)$, we will find a condition on $t = t(\veps)$ so that the volume of $\cD_{d,\varphi}^2(t)$ is at least $(2+\veps)^{d+1}$. Then an application of \cite[Sec. 7.2, Thm. 1]{GeomOfNumbers} proves there are infinitely many integer-valued polynomials $P$ satisfying the growth condition
\[
\sum_{n\in\N} \frac{1}{n+1} |P_{\textbf{c}}(n) \varphi^{-n}|^2 \le t^2.
\]

The asymptotics of $\gamma_k$ follow from Theorem $\mathrm{S}.3.1$ (in Suetin's notation it is equal to $\lambda_k^{-1}$). We introduce some notation required for the statement of the results.

Let $\Phi$ be the conformal map from the exterior of $\D_1$ to the exterior of the unit disk $\D$, satisfying $\Phi(\infty) = \infty$ and $\Phi^\prime(\infty) > 0$, that is the map $\Phi(z) = z - \varphi^{-1}$.

We need to introduce an analytic function $D: \D^c \to \D_1^c$ such that
\[
\left|D(w)\right|^2 = \frac{1}{\left|1+\left(w + \varphi^{-1} \right)\right|^2} = \frac{1}{\left|w + \varphi\right|^2} \quad \text{on} \quad |w| = 1.
\]
Explicitly it is given by the following expression
\[
D(w) = \exp \left( -\frac{1}{4 \pi} \int_0^{2\pi} \log \frac{1}{| e^{i \theta} + \varphi|^2} \frac{e^{i\theta} + w}{e^{i \theta} - w} \, d\theta \right).
\]
Using Suetin's notation, let
\[
g(z) = \frac{\Phi^\prime(z)}{D\left(\Phi(z)\right)} = \frac{\Phi^\prime(z)}{D\left( z - \varphi^{-1} \right)} = \frac{1}{D(z - \varphi^{-1})}, \quad \text{and} \quad \alpha_0 = g(\infty) = \frac{1}{D(\infty)}.
\]
Since the function $z + \varphi$ does not vanish in $\D$, the mean value theorem for harmonic functions gives
\[
\log D(\infty) = \frac{1}{4 \pi} \int_0^{2\pi} \log \frac{1}{| e^{i \theta} + \varphi|^2} \, d\theta
 = - \frac{1}{2 \pi} \int_0^{2\pi} \log | e^{i \theta} + \varphi| \, d\theta
 = - \log \varphi.
\]

Now by Theorem $\mathrm{S}.3.1$ we have
\[
\gamma_k^2 = \frac{\pi}{\alpha_0^2(k+1)} \left(1 + o(1)\right) = \frac{\pi}{\varphi^2(k+1)} \left(1 + o(1)\right).
\]

By the asymptotic formula for the volume of Euclidean ball and Stirling's approximation, we have
\[
\vol{\cD_{d,\varphi}^2(t)} = \vol{B_2^{d+1}} \prod_{k=0}^{d} \gamma_k^{-1} \left(\frac{\sqrt{\pi t}}{\varphi} \right)^{d+1}
= (\sqrt{2\pi})^{d+1} t^{d+1} \cdot \exp(o(1)).
\]
Finally, we conclude that if $t \ge \sqrt{2/\pi} + \veps$, then the required condition is satisfied.

\section{\texorpdfstring{$\ell^\infty$}{linf} volume and lattice points}
Let $A,B > 1$ be fixed constants, and recall the sequences
\[
a_n(\mathbf{c}) = P_\mathbf{c}(n) A^{-n}, \: n\in\N, \quad b_n(\mathbf{c}) = P_\mathbf{c}(-n) B^{-n}, \: n\in\N^+.
\]
In this section we consider the sets
\[
\cC_{d,A,B}^\infty(t) = \left\{ \mathbf{c} \in \R^{d+1} \colon \| a_n(\mathbf{c}) \|_{\ell^\infty(\N)} \le t, \, \| b_n(\mathbf{c}) \|_{\ell^\infty(\N^+)} \le t \right\},
\]
and
\[
\cC_{d,A,B}^\infty(t;M) = \left\{ \mathbf{c} \in \R^{d+1} \colon \max_{n\in{\{0,\dots,M}\}}| a_n(\mathbf{c}) | \le t, \, \max_{n\in{\{1,\dots,M\}}}| b_n(\mathbf{c}) | \le t \right\}.
\]

By Lemma \ref{lem:poly_bound_twosided} there is a constant $C_{A,B} > 1$, such that for $M > C_{A,B} \cdot d$ we have
\[
\cC_{d,A,B}^\infty(t) = \cC_{d,A,B}^\infty(t;M) \quad \text{as sets}.
\]
Below we fix $M = \lceil C_{A,B} \cdot d \rceil$.

Let $\Sigma_M = \Sigma_M(d,t;A,B)$ be an $(2M+1) \times (d+1)$ matrix, whose entries are given by
\[
\Sigma_{j,k} = \begin{cases}
  \frac{1}{t} \binom{j}{k} B^{j}  & ,\, j \in \{-M,\dots,-1\}; \\
  \frac{1}{t} \binom{j}{k} A^{-j},  & j \in \{0, \dots, M\},
\end{cases}
\]
where $k\in\{0,\dots,d\}$. The semi-infinite matrix $\Sigma_\infty = \Sigma_\infty(d,t;A,B)$ is defined analogously. We have that
\[
\cC_{d,A,B}^\infty(t;M) = \left\{ (c_0,\dots,c_d) \in \R^{d+1} \colon \sup_{|j|\le M} \left| \sum_{k=0}^d c_k \Sigma_{j,k} \right| \le 1 \right\}.
\]

Denote by $V_M = V_M(d,t;A,B)$ the volume of the set $\cC_{d,A,B}^\infty(t;M)$ in $\R^{d+1}$. By Theorems~\ref{thm:Vaaler_volume_lattice_pts} and \ref{thm:Ball_volume} we have
\begin{equation}\label{eq:volume_upper_lower_bound}
\frac{1}{\sqrt{\det \Sigma_M^t \Sigma_M}} \le V_M \le \frac{2^{M-d/2}}{\sqrt{\det \Sigma_M^t \Sigma_M}} \le \frac{(C_{A,B})^{d}}{\sqrt{\det \Sigma_M^t \Sigma_M}}.    
\end{equation}
The Cauchy-Binet theorem gives
\[
\det \left(\Sigma_M^t \Sigma_M \right) \ge \det \left(\Sigma_\infty^t \Sigma_\infty \right).
\]
Notice that by Corollary \ref{cor:innerProd_GF} and a change of variables
\[
\begin{aligned}
\left( \Sigma_\infty^t \Sigma_\infty \right)_{j,k} & = \frac{1}{t^2} \left[ \sum_{n=-\infty}^{-1} \binom{n}{j} \binom{n}{k} B^{2n} + \sum_{n=0}^{\infty} \binom{n}{j} \binom{n}{k} A^{-2n} \right]\\
 & = \frac{1}{t^2} \left[ \sum_{n=1}^{\infty} \binom{-n}{j} \binom{-n}{k} B^{-2n} + \sum_{n=0}^{\infty} \binom{n}{j} \binom{n}{k} A^{-2n} \right]\\
& = \frac{1}{t^2} \left[ \frac{1}{\pi B} \int_{B\T} g_j(z) \overline{g_k(z)} \dd |z| + \frac{A}{\pi} \int_{A^{-1} \T} g_j(z) \overline{g_k(z)} \dd |z| \right]\\
& = \frac{1}{t^2} \left[ \frac{1}{\pi B} \int_{\T_2} w^j \bar{w}^k \dd |w| + \frac{A}{\pi} \int_{\T_1} w^j \bar{w}^k \dd |w| \right],
\end{aligned}
\]
where
\[
g_j(z) = \frac{1}{1-z} \left( \frac{z}{1-z} \right)^j, \quad \T_1 = \T\left(\frac1{A^2-1},\frac{A}{A^2-1}\right), \quad
\T_2 = \T\left(\frac{B^2}{1-B^2},\frac{B}{B^2-1}\right).
\]

By Theorem \ref{thm:det_moment_matrix} we have that
\[
\det \left( \Sigma_\infty^t \Sigma_\infty \right) = \prod_{k=0}^d \left\| q_k \right\|_{L^2(\mu_t)}^2,
\]
where $q_k$ are monic orthogonal polynomials with respect to arc-length on $\T_1 \cup \T_2$, weighted by $A$ on $\T_1$ and $\frac{1}{B}$ on $\T_2$. The measure $\mu_t$ is defined similarly, scaled by $\frac{1}{\pi t^2}$.

Let $\eta \in (0,1)$. Assume $A,B > 1$ are such that the condition of Theorem \ref{thm:Szego_OP} is satisfied. Then, we have that
\[
\left\| q_k \right\|_{L^2(\mu_t)} \asymp_\eta t^{-1} \logcap{\Gamma}^k, \quad \text{where} \quad \Gamma = \T_1 \cup \T_2.
\]
Notice we can reduce to the case of arc-length (weighted by $\frac{1}{t}$), by bounding the weight of the measure $\mu_t$ from below and from above, by factors depending only on $\eta$ (ignoring the dependence on $A,B$). Hence,
\begin{equation}\label{eq:volume_Sigma_infty_asymp}
\begin{aligned}
\log \det \left( \Sigma_\infty^t \Sigma_\infty \right) & = d(d+1) \log \logcap{\Gamma} - 2 (d+1) \log t + O_\eta( d ) \\
& = d^2 \log \logcap{\Gamma} - 2 d \log t + O_\eta( d ).    
\end{aligned}
\end{equation}
Therefore,
\[
\log \vol{\cC_{d,A,B}^\infty(t)} \ge -\frac12 d^2 \log \logcap{\Gamma} + d \log t + O_\eta( d ).
\]
By Proposition \ref{prop:det_lower_bnd_twosided} (perhaps after increasing the constant $C_{A,B}$ in the definition of $M$), and the upper bound in \eqref{eq:volume_upper_lower_bound} we obtain a matching upper bound for $\log \vol{\cC_{d,A,B}^\infty(t)}$.
This proves the $\ell^\infty$ volume result in the first part of Theorem \ref{thm:subcrit_volume_latticept}. It remains to prove Theorems~\ref{thm:IVP_thres_infty} and~\ref{thm:linfty_twosided}.

Fix $A,B > 1$ such that $\gamma_{A,B} = \logcap{\Gamma} = 1$ (in particular, we may assume $A,B > \frac32$). By the second part of Theorem \ref{thm:Vaaler_volume_lattice_pts}, if $\det \left( \Sigma_\infty^t \Sigma_\infty \right)$ (which depends on $d$) tends to $0$ as $d \to \infty$, then there are infinitely many IVPs satisfying the condition of Theorem \ref{thm:linfty_twosided}. Hence by \eqref{eq:volume_Sigma_infty_asymp} for $t$ sufficiently large depending on $\eta$ the required result is obtained.

To prove Theorem \ref{thm:IVP_thres_infty} we follow a similar path, now taking the constants into account. Consider the set
\[
\cC_{d}^\infty(t) = \left\{ \mathbf{c} \in \R^{d+1} \colon \sup_{n\in\N} |P_\mathbf{c}(n) \goldR^{-n}| \le t \right\}
= \left\{ \mathbf{c} = (c_0,\dots,c_d) \colon \sup_{n \in \N} \left| \sum_{k=0}^d c_k \Sigma_{n,k} \right| \le \frac12 \right\},
\]
where
\[
\Sigma_{n,k} = \Sigma_{n,k}(t) = \frac{1}{2t} \binom{n}{k} \goldR^{-n}.
\]
We have that
\[
\left( \Sigma_\infty^t \Sigma_\infty \right)_{j,k} = \frac{1}{4t^2} \sum_{n=0}^{\infty} \binom{n}{j} \binom{n}{k} \goldR^{-2n}
 = \frac{1}{4t^2} \frac{\goldR}{\pi} \int_{\goldR^{-1} \T} g_j(z) \overline{g_k(z)} \dd |z|
 = \frac{1}{4t^2} \frac{\goldR}{\pi} \int_{\T_1} w^j \bar{w}^k \dd |w|,
\]
where
\(
\T_1 = \T\left(1/\goldR, 1\right).
\)

Let $q_k$ be the monic orthogonal polynomial with respect to the arc-length measure on $\T_1$, which are given by
\[
q_k(w) = \big( w - 1/\goldR \big)^k.
\]
Hence, by Theorem \ref{thm:det_moment_matrix} we have (taking the normalization into account)
\[
\det \left( \Sigma_\infty^t \Sigma_\infty \right) = \left( \frac{\goldR}{2t^2} \right)^{d+1}.
\]

By Lemma \ref{lem:poly_bound}, there exists some constant $C_0 > 0$, such that we have 
\[
\cC_{d}^\infty(t) = \left\{ \mathbf{c} = (c_0,\dots,c_d) \colon \sup_{n \le C_0 d} \left| \sum_{k=0}^d c_k \Sigma_{n,k} \right| \le \frac12 \right\},
\]
for all $d$. Therefore, by the second part of Theorem \ref{thm:Vaaler_volume_lattice_pts} (and the monotonicity of the determinant), the set $\cC_{d}^\infty(2t)$ contains at least
\(
2 \left\lfloor \det \left( \Sigma_\infty^t \Sigma_\infty \right)^{-1/2} \right\rfloor
\)
integer lattice points. Hence for any $\veps > 0$, if we choose $t = \sqrt{\goldR/2} + \veps/2$ there are infinitely many IVPs $P$ satisfying
\[
|P(n)| \le \left(\sqrt{2\goldR} + \veps \right) \goldR^n \quad \text{for all } \, n \ge 0.
\]
This concludes the proof of Theorem \ref{thm:IVP_thres_infty}.

\appendix

\section{Moment matrix and orthogonal polynomials} \label{sec:orthoPoly}

Let $\mu$ be a measure on $\C$ with infinite support and all moments (so that no non-trivial algebraic polynomial can vanish on the support of $\mu$). Define the $n$-th moment matrix
\[
\cM(n;\mu) = \left[ M_{k,m} \right]_{k,m=0}^n, \qquad M_{k,m} = \int_\C z^k \bar{z}^m \, \dd \mu(z).
\]
These matrices are positive definite. To see this, note that for $\textbf{a} = (a_0, \dots, a_n) \in \C^{n+1} \setminus \{0\}$, if we put $P_n(z) = \sum_{k=0}^n a_k z^k$, then
\begin{align*}
0 & < \left\| P_n \right\|^2_{L^2(\mu)} = \int_\C \left| \sum_{k=0}^n a_k z^k \right|^2 \dd \mu(z) = \sum_{k,m=0}^n a_k \overline{a_m} \int_\C z^k \bar{z}^m \, \dd \mu(z)\\
& = \sum_{k,m=0}^n a_k \overline{a_m} M_{k,m} = \textbf{a}^\ast \cM(n;\mu) \textbf{a}.
\end{align*}

Now put $D_n = D_n(\mu) = \det \cM(n ; \mu)$. We can define \emph{monic} orthogonal polynomials with respect to $\mu$ by
\[
Q_0(z) = 1, \quad Q_n(z) = \frac{1}{D_{n-1}} \det
\left[
\begin{array}{cccc}
M_{0,0} & & & M_{0,n} \\
\vdots & \ddots & & \vdots \\
M_{n-1,0} & & & M_{n-1,n} \\
1 & z & \dots & z^n\\
\end{array}
\right].
\]
The orthogonality follows, since for $j \in \{ 0, \dots, n \}$
\[
D_{n-1} \int_\C z^j \overline{Q_n(z)} \, \dd \mu(z) =
\det
\left[
\begin{array}{cccc}
M_{0,0} & & & M_{0,n} \\
\vdots & \ddots & & \vdots \\
M_{n-1,0} & & & M_{n-1,n} \\
M_{j,0} & M_{j,1} & \dots & M_{j,n}\\
\end{array}
\right]
=
\begin{cases}
0 & ,\, j \in \{0, \dots, n-1\};\\
D_n & ,\, j = n.\\
\end{cases}
\]

From the above we also find that
\[
\left\| Q_n \right\|^2_{L^2(\mu)} = \int_\C z^n \overline{Q_n(z)} \, \dd \mu(z) = \frac{D_n}{D_{n-1}},
\]
while $D_0 = M_{0,0} = \mu(\C) = \left\| Q_0 \right\|^2_{L^2(\mu)}$. We conclude that
\[
D_n = \prod_{j=0}^n \left\| Q_j \right\|^2_{L^2(\mu)}.
\]

\section{Bergman polynomials on two disks} \label{sec:Bergman_poly_two_islands}

Here we proved Theorem \ref{thm:BegmanAsymp}. We use the method of the paper \cite{BergmanPoly} (simplified for a domain which is the union of two disks).

Let $\eta \in (0,1)$, $G = \D_1 \cup \D_2$ be the \emph{disjoint} union of two disks, whose radii are bounded between $\eta$ and $\frac{1}{\eta}$, and are separated by at least $\eta$ and at most $\frac{1}{\eta}$. Put $\Gamma = \partial G = \T_1 \cup \T_2$.

We denote by $\{ t_n \}_{n=0}^\infty$ the sequence of \emph{monic} Bergman orthogonal polynomials with respect to area measure on $G$. Write $\gamma_k$ for the normalizing constant of $t_k$ (here we do not consider the weighted case of Bergman polynomials).

For $p$ a polynomial of degree $n$ we have by the orthogonality of the monomials
\[
\int_{r \T} |p(z)|^2 \dd|z| \le \frac{2(n+1)}{r^2} \int_{r \D} |p(z)|^2 \dd A(z).
\]
Recall that $\beta_k^2$ minimizes $\int_\G |p_k|^2 \dd |z|$ for monic polynomials, and $\gamma_k^2$ minimizes $\int_G |p_k|^2 \dd A(z)$. Therefore, we have
\[
\gamma_n^2 = \int_{\D_1} |t_n(z)|^2 \dd A(z) + \int_{\D_2} |t_n(z)|^2 \dd A(z)
\ge \frac{f_3(\eta)}{n+1} \int_{\G} |t_n(z)|^2 \dd |z| \ge \frac{f_3(\eta)}{n+1} \beta_n^2,
\]
where $f_3$ is some positive continuous function in $\eta \in (0,1)$.

Using Theorem \ref{thm:Szego_OP}, we conclude the lower bound in Theorem \ref{thm:BegmanAsymp}
\[
\gamma_n \ge C_1 \cdot \frac{\logcap{\G}^{n+1}}{\sqrt{n+1}}.
\]

To prove the upper bound for $\gamma_n$ we will use the following (very special case of) \cite[Thms. $8.3$ and $5.6$]{Widom69}, concerning Chebyshev polynomials.
\begin{thm}\label{lem:Widom_poly}
Let $\G = \partial G = \T_1 \cup \T_2$ be the boundary of the disks, as in Theorem \ref{thm:BegmanAsymp}. There is a constant $M$ depending continuously on the Green function $\greenf$ of $\G$ (w.r.t. $\infty$) and its derivatives, such that for each $n$, there is a monic polynomial $Q$ of degree $n$ satisfying
\[
\log|Q(z)| \le M + n \log \logcap{\G}, \qquad \forall z\in\G.
\]
\end{thm}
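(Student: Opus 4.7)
My plan is potential-theoretic. Let $\greenf$ denote the Green function of the unbounded component of $\C \setminus \G$ with pole at $\infty$, and let $\mu$ be the equilibrium measure of $\G$. The defining identity
\[
\int_\G \log|z-w|\,\dd\mu(w) = \log\logcap{\G}, \qquad z \in \G,
\]
reduces the theorem to producing, for each $n$, nodes $z_1,\dots,z_n \in \G$ for which the monic polynomial $Q(z) = \prod_{k=1}^n(z-z_k)$ satisfies
\[
\sup_{z \in \G} \left| \sum_{k=1}^n \log|z-z_k| \;-\; n \log\logcap{\G} \right| \le M,
\]
with $M$ depending only on the regularity of $\greenf$ (equivalently, only on $\eta$).

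For the two-circle set $\G = \T_1 \cup \T_2$, the equilibrium measure splits as $\mu = \omega_1 \mu_1 + \omega_2 \mu_2$, where each $\mu_j$ is a probability measure on $\T_j$ with a real-analytic, $\eta$-bounded density (the circles being analytic and disjoint), and $\omega_j$ is the harmonic measure of $\T_j$ seen from $\infty$. I would place $k_j = \lfloor n \omega_j \rfloor$ nodes on $\T_j$ (adjusting one index so that $k_1 + k_2 = n$), spacing them by $\mu_j$-quantile, or by a Fekete/Leja rule adapted to the density. In the single-disk degeneration $\G = \T(a,r)$, this recipe collapses to $Q_n(z) = (z-a)^n$ with $M = 0$, consistent with the fact that $\mu$ is then uniform on $\T(a,r)$ and $\omega_1 = 1$.

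The main obstacle is the logarithmic singularity of $\log|z-w|$ when $z$ approaches one of the chosen nodes. A direct Riemann-sum estimate yields only an $O(\log n)$ correction on the logarithmic scale of $|Q_n|$, enough to give $\|Q_n\|_\G^{1/n} \to \logcap{\G}$ but not the $O(1)$-multiplicative bound required here. The delicate step, carried out in \cite{Widom69}, is to perturb the nodes so that the near-diagonal discrepancy between the discrete and continuous potentials is absorbed into $M$. For the present two-component geometry, one could alternatively exploit the theta-function expression for $\greenf$ indicated in the earlier remark, which makes the required cancellation explicit. In practice I would simply invoke Widom \cite[Thm.~8.3 and Thm.~5.6]{Widom69}: its hypotheses (finitely many disjoint analytic Jordan curves) are plainly satisfied by $\T_1 \cup \T_2$, and the resulting $M$ depends continuously on the geometry through $\greenf$ and its derivatives, hence on $\eta$ only, as asserted.
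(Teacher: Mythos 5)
Your proposal ultimately invokes Widom \cite[Thms.~8.3 and 5.6]{Widom69}, which is exactly what the paper does: the result is stated there as a ``very special case'' of Widom's theorems, with no independent proof supplied. Your potential-theoretic sketch is a reasonable account of the mechanism underlying Widom's argument (equilibrium-measure node placement, harmonic measures $\omega_j$, and the $O(\log n)$ barrier from the near-diagonal discrepancy), so the two approaches coincide. One small slip in your sanity check: placing $n$ nodes on $\T(a,r)$ distributed by the (uniform) equilibrium measure, or by a Fekete rule, produces $(z-a)^n - c$ with $|c| = r^n$ (hence $M = \log 2$), not the Chebyshev extremal $(z-a)^n$ with $M = 0$ — that polynomial has all its zeros at the centre $a$, which lies off $\G$ and is therefore not reachable by a nodes-on-$\G$ recipe. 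This is harmless here since the theorem only requires some monic polynomial, not one with zeros on $\G$, and Widom's actual construction is not constrained in that way.
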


\begin{rmk}
By the assumptions $M$ can be bounded (continuously) in terms of $\eta$.
\end{rmk}

Now we repeat the proof of \cite[Lemma 8.2]{BergmanPoly} in a simplified form. Let $\greenf = \greenf_\Omega$ be the Green function (always w.r.t. $\infty$) for
\[
\Omega \defeq G^c = \C \setminus (\D_1 \cup \D_2).
\]
The function $\greenf$ can be extended harmonically into the Green function (up to a constant) of a larger domain $\Omega_\tau$ which corresponds to the level set $\greenf = \log \tau$ with $\tau \in (0,1)$. This $\tau$ may be chosen to be a continuous function of $\eta$. Moreover, fixing some $\rho = \rho_{\eta} \in (\tau, 1)$ in a continuous way in $\eta$, we have for $R \ge \rho$
\[
\greenf_R(z) \defeq \greenf_{\Omega_R}(z) = \greenf(z) - \log R, \qquad \Omega_R \defeq \{ \greenf \ge \log R \}.
\]
Thus $\logcap{\partial \Omega_R} = R \logcap{\partial G} = R \logcap{\G}$. By Theorem \ref{lem:Widom_poly} applied to $\partial \Omega_\rho$
we have a polynomial $Q$ of degree $n$ such that
\[
\log |Q(z)| \le M + n \log \logcap{\partial \Omega_\rho}, \qquad \forall z \in \partial \Omega_\rho,
\]
where $M$ depends continuously on $\eta$. By the maximum principle (for the unbounded domain $\Omega_\rho$) this inequality extends to
\[
\log |Q(z)| \le M + n (\log \logcap{\partial \Omega_\rho} + \greenf_\rho(z)), \qquad \forall z \in \Omega_\rho.
\]
Since $\greenf_R(z) + \log \logcap{\partial \Omega_R} = \log \logcap{\G}$ does not depend on $R$, we have for $R \ge \rho$
\[
\log|Q(z)| \le M + n \log \logcap{\partial \Omega_R} \implies |Q(z)| \le M^\prime R^n \logcap{\Gamma}^n, \qquad \forall z \in \partial \Omega_R.
\]
Now, by the usual maximum principle
\begin{equation}\label{eq:max_principle_bnd}
|Q(z)| \le M^\prime R^n \logcap{\Gamma}^n, \qquad z \in \Omega_R^c,\: \rho \le R < \infty.    
\end{equation}

We proceed to bound the $L^2(G)$-norm of $Q$ as in \cite{BergmanPoly} by splitting $G$ into $\Omega_\rho^c$ and the rest. We bound the first part using \eqref{eq:max_principle_bnd}. The integral over $G \setminus \Omega_\rho$ is bounded by foliating it by the level sets of $\partial \Omega_R$ using \eqref{eq:max_principle_bnd} and the coarea formula. For the convenience of the reader we provide the details below.

Write the integral over $G$ as follows
\[
\int_{G} |Q|^2 \dd A = \int_{\Omega_\rho^c} |Q|^2 \dd A + \int_{G \cap \Omega_\rho} |Q|^2 \dd A \eqdef I_1 + I_2.
\]
We bound $I_1$ using \eqref{eq:max_principle_bnd} by
\[
I_1 \le \Area(G) \max_{z \in \partial \Omega_\rho} |Q|^2 \le (M^\prime)^2 \Area(G) \rho^{2n} \logcap{\Gamma}^{2n}.
\]
In order to bound $I_2$ we foliate $G \cap \Omega_\rho$ by the level sets $\partial \Omega_R$ of $\greenf_\Omega$, and use the coarea formula. Notice that $\nabla \greenf_\Omega$ is (uniformly) bounded away from zero on $G \cap \Omega_\rho$. Thus, by \eqref{eq:max_principle_bnd}
\[
I_2 = \int_{\rho}^1 \int_{\partial \Omega_R} \frac{|Q(z)|^2}{|\nabla \exp\left(\greenf_\Omega(z)\right)|} \dd |z| \dd R
\le M^{\prime\prime} \int_\rho^1 \Length(\partial \Omega_R) (M^\prime)^2 R^{2n} \logcap{\Gamma}^{2n} \dd R.
\]

In order to bound the length of the level sets (uniformly), notice that
\[
-2\pi = \int_{\partial \Omega_R} \frac{\partial \greenf}{\partial n} \dd s
= \int_{\partial \Omega_R} \nabla \greenf \cdot n \, \dd s
= -2\pi \int_{\partial \Omega_R} |\nabla \greenf| \dd s
\]
so that
\[
2\pi = \int_{\partial \Omega_R} |\nabla \greenf| \dd s \ge \min_{\partial \Omega_R}|\greenf| \cdot \Length( \partial \Omega_R) \ge C_{\eta} \Length( \partial \Omega_R).
\]
Finally this gives the following bound for $I_2$
\[
I_2 \le C_{\eta} \logcap{\Gamma}^{2n} \int_\rho^1 R^{2n} \dd R \le C_{\eta} \cdot \frac{\logcap{\Gamma}^{2n}}{n+1}.
\]

Now we conclude by noting that
\[
\gamma_n^2 = \int_G |t_n|^2 \dd A(z) \le \int_G |Q|^2 \dd A(z) \le C_{\eta}^\prime \cdot\frac{\logcap{\Gamma}^{2n+2}}{n+1}.
\]

\printbibliography

\end{document}